\newtheorem{theorem}		{Theorem}[section]
\newtheorem{corollary}		[theorem]{Corollary}
\newtheorem{proposition}		[theorem]{Proposition}
\newtheorem{definition}		[theorem]{Definition}
\newtheorem{lemma}		[theorem]{Lemma}
\newtheorem*{lemma*}{Lemma}
\newtheorem{sideremark}		[theorem]{Remark}
\newtheorem{sidenote}		[theorem]{Note}
\newtheorem{sideeg}		[theorem]{Example}
\newtheorem{sideconj}		[theorem]{Conjecture}
\newtheorem{sideassumption}	{Assumption}
\newenvironment{remark}		{\begin{sideremark}\rm}{\end{sideremark}}
\DeclareMathOperator*{\stat}{stat}
\DeclareMathOperator*{\argstat}{arg\,stat}
\newcommand{\ba}				{\begin{array}}
\newcommand{\ea}				{\end{array}}
\newcommand{\er}[1]			{{(\ref{#1})}}
\newcommand{\ddtone}[2]			{{\frac{d {#1}}{d {#2}}}}
\newcommand{\pdtone}[2]			{{\frac{\partial {#1}}{\partial {#2}}}}
\newcommand{\ol}[1]				{{\overline{#1}}}
\newcommand{\op}[1]			{{\mathcal{#1}}}
\newcommand{\ts}[1]				{{\textstyle{#1}}}
\newcommand{\wh}[1]			{{\widehat{#1}}}
\newcommand{\wt}[1]			{{\widetilde{#1}}}
\newcommand{\Frechet}			{{Fr\'{e}chet}}
\newcommand{\demi}			{{\ts{\frac{1}{2}}}}
\newcommand{\nn}				{{\nonumber}}
\newcommand{\bo}				{{\mathscr{L}}}
\newcommand{\cB}				{{\mathscr{B}}}
\newcommand{\cU}				{{\mathscr{U}}}
\newcommand{\cX}				{{\mathscr{X}}}
\newcommand{\cY}				{{\mathscr{Y}}}
\newcommand{\cZ}				{{\mathscr{Z}}}
\newcommand{\grad}			{{\nabla}}
\newcommand{\N}				{{\mathbb{N}}}
\newcommand{\R}				{{\mathbb{R}}}
\newcommand{\Z}				{{\mathbb{Z}}}
\title{Exploiting characteristics in stationary action problems\thanks{Partially supported by AFOSR grants FA2386-16-1-4066. A preliminary version of this work appeared in \cite{DMY1:19}.}
}
\author[1]{Vincenzo Basco \thanks{\texttt{vincenzo.basco@unimelb.edu.au, vincenzobasco@gmail.com}}}
\author[2]{Peter M. Dower\thanks{\texttt{pdower@unimelb.edu.au}}}
\author[3]{William M. McEneaney\thanks{\texttt{wmceneaney@ucsd.edu}}}
\author[4]{Ivan Yegorov\thanks{\texttt{ivan.egorov@ndus.edu,  ivanyegorov@gmail.com}}}
\affil[1,2]{\small Dept. Electrical \& Electronic Eng., University of Melbourne, Victoria 3010, Australia.}
\affil[3]{Dept. Mech. \& Aero. Eng., UC San Diego, La Jolla, CA 92093 USA.}
\affil[4]{Dept. of Mathematics, North Dakota State University, Fargo ND 58102, USA.}
\date{}
\begin{document}



\maketitle








\begin{abstract} 
\noindent
Connections between the principle of stationary action and optimal control, and between established notions of minimax and viscosity solutions, are combined to describe trajectories of energy conserving systems as solutions of corresponding Cauchy problems defined with respect to attendant systems of characteristics equations. 
\end{abstract}


\small{\textbf{Key words}: optimal control, stationary action, characteristics, Hamilton-Jacobi-Bellman equations.}

\

\small{\textit{Mathematics Subject Classification}: Primary: 65M25, 49J15, 49S05. Secondary:	35F21.}

\section{Introduction}
\label{sec:intro}

In recent investigations \cite{MD1:15,DM1:17}, connections between Hamilton's action principle and optimal control have been exploited to synthesize fundamental solutions for conservative systems of differential equations, in finite and infinite dimensions, and their related two point boundary value problems (TPBVPs). In each case, an optimal control problem is identified whose cost is representative of the desired {\em action}, leading to a characteristic system corresponding to the desired conservative system. The tools of optimal control, including dynamic programming, semigroup theory, idempotent algebra, and convex analysis, subsequently provide a pathway for construction of its fundamental solution, for large classes of boundary conditions, see for example \cite{MD1:15}.

For short time horizons, convexity of the action functional with respect to the momentum trajectory is typically guaranteed (for finite dimensional dynamics). This ensures that an associated optimal control problem is well-defined, c.f. \cite{MD1:15}. Consequently, stationary action is achieved as {\em least} action, as characterised by a corresponding value function, while the associated equations of motion are described by the characteristic system corresponding to a standard Hamilton-Jacobi-Bellman partial differential equation. 

For longer or infinite time horizons, or for configurations with infinite dimensional dynamics, the equivalence of stationary action and optimal control breaks down, typically due to a loss of convexity of the action or to a presence of state constraints, in which further controllability assumptions are needed \cite{bascofrankcann2017necessary,bascofrankowska2018hjbe,bascofrankowska2018lipschitz}. This leads to finite escape phenomena exhibited by the value function, and hence an inability to propagate solutions beyond these times. This limitation is particularly severe in the infinite dimensional setting \cite{DM1:17}, and motivates exploration of {\em stationary} control problems, as opposed to optimal control problems, whose value can propagate through these finite escape phenomena to longer horizons \cite{DM1:17,MD1:17,MD1:18}.

An optimal control problem can be relaxed to a stationary control problem by formally replacing the infimum (or supremum) operation in the definition of the attendant value function with a {\em stat} operation \cite{MD1:17,MD1:18}. As indicated, this stat operation requires only stationarity of its cost function argument, rather than optimality. In the stationary action problems considered to date, see for example \cite{MD1:17,DM1:17}, this has involved the characterization of open loop controls that render the cost stationary. However, motivated by the notion of minimax solutions considered in \cite{S:95,YD1:18}, it is also reasonable to consider initial adjoint or momentum variables that render an associated characteristics based cost stationary. An investigation in this direction forms the basis of this paper, building on the preliminary work of \cite{DMY1:19}. The main results document an equivalence between two stationary control problems, subject to uniqueness of solutions of an attendant TPBVP, and a verification result for stationary trajectories. An illustrative example is included.

In terms of organization, in Section \ref{preliminaries} we recall some basic definitions and state the main assumptions of the present paper. The connection between least action and optimal control is reviewed in Section \ref{sec:optimal}, along with the relevant notion of minimax solution \cite{S:95,YD1:18}. Optimality in the attendant minimax solution definition is then relaxed to stationarity in Section \ref{sec:stationary}, and its relationship to the earlier work \cite{DM1:16} established. The paper concludes with a simple example in Section \ref{sec:example} and an Appendix.



\section{Preliminaries and main assumptions}\label{preliminaries}

Throughout, $\R$, $\Z$, $\N$ denote the real, integer, and natural numbers respectively, with extended reals defined as $\ol{\R} \doteq \R\cup\{\pm\infty\}$. $|\cdot|$ and $\langle\cdot,\cdot \rangle$ stands for the Eucliden and the standard scalar product, respectively. The space of continuous mappings between Banach spaces $\cX$ and $\cY$ is denoted by $C(\cX;\cY)$. The set of bounded linear operators between the two spaces is denoted by $\bo(\cX;\cY)$, or $\bo(\cX)$ if $\cX$ and $\cY$ coincide. Let $I\subset \mathbb R$ a closed interval. If $\cX$ is a real Hilbert space, we denote by $ L^2(I;\cX)$ the space of square summable measurable functions on $I$ endowed with the standard inner product. 

	Let $\cX,\,\cY$ two Banach spaces. A function $f\in C(\cX;\cY)$ is said to be \textit{{\Frechet} differentiable at $x\in\cX$}, with derivative $Df(x)\in\bo(\cX;\cY)$, if $ \lim_{\|h\|_\cX\rightarrow 0} \|df_x(h)\|_\cY=0$
	in which $df_x:\cX\rightarrow\cX$ is defined by
	\begin{align}
	\label{eq:df}
	& df_x(h)
	\doteq \left\{ \ba{cl}
	0 & \|h\|_\cX = 0
	\\
	\frac{f(x+h) - f(x) - Df(x)\, h}{\|h\|_\cX}
	& \|h\|_\cX > 0.
	\ea
	\right.
	\end{align}
	By definition, the map $h\mapsto df_x(h)$ is continuous at $0$. The function $f$ is said to be \textit{{\Frechet} differentiable} if it is {\Frechet} differentiable at any $x\in \cX$.

A function $f$ is continuously {\Frechet} differentiable, written $f\in C^1(\cX;\cY)$, if $Df$ is continuous on $\cX$. Higher order {\Frechet} derivatives are similarly defined, with $f\in C^k(\cX;\cY)$ if $f$ is $k$-times {\Frechet} differentiable.

Let $\cX$ denote a real Hilbert space of instantaneous generalized positions, and let $T\in\R_{\ge 0}$ and $t\in[0,T]$ denote the final and initial times of the desired motion. Denote the corresponding real Hilbert space of generalized momentum trajectories by $\cU[t,T]\doteq L^2([t,T];\cX)$.

Given an initial generalized position $x\in\cX$, a potential field $V\in C^4(\cX;\R)$, and a coercive self-adjoint inertia operator $\op{M}\in\bo(\cX)$, the {\em action} is defined as an integrated Lagrangian encapsulated by a cost function $J_T(t,x,\cdot):\cU[t,T]\rightarrow\R$ using an artificial terminal cost $\psi\in C^4(\cX;\R)$. In particular, 
\begin{align}
J_T(t,x,u) & \doteq \int_t^T \demi\, \langle u_s,\, \op{M}\, u_s \rangle - V(\bar x_s) \, ds + \psi(\bar x_T),
\label{eq:cost}
\end{align}
in which $s\mapsto \bar x_s\in C([t,T];\cX)$ is the generalized position trajectory satisfying
\begin{align}
\label{eq:dynamics}
\bar x_s
& \doteq x + \int_t^s u_\sigma\, d\sigma,
\quad
s\in[t,T],
\end{align}
for all $u\in\cU[t,T]$.

It is assumed throughout that there exist constants $m\in\R_{>0}$ and $K\in\R_{\ge 0}$ such that for all $x,h\in \cX$
\begin{align}
\begin{cases}
m \, \|h\|^2 - \langle h, \, \op{M}\, h \rangle \le 0,\\
\max_{j=0,1,2}  \|D^j \grad^2 V(x)\|_{\bo(\cX)} \le \ts{\frac{K}{2}},\\
\|\grad^2 \psi(x)\|_{\bo(\cX)} \le \ts{\frac{K}{2}},
\label{eq:ass-M-V}
\end{cases}
\end{align}
i.e., $\op{M}$ is coercive (and hence boundedly invertible), while second and third derivatives of the potential field and second derivative of the terminal cost are uniformly bounded.

\section{Least action and optimal control}
\label{sec:optimal}


For sufficiently short time horizons, an optimal control problem can be formulated that encapsulates stationary action as {\em least} action. Given $T\in\R_{\ge 0}$ sufficiently small, this optimal control problem is defined via a value function $W_T:[0,T]\times\cX\rightarrow\ol{\R}$ given by
\begin{align}
	W_T(t,x)
	& \doteq \inf_{u\in\cU[t,T]} J_T(t,x,u)
	\label{eq:optimal-value}
\end{align}
for all $t\in[0,T]$, $x\in\cX$. 

\begin{theorem}
\label{thm:optimal-value}
Given $T\in\R_{\ge 0}$, $t\in[0,T]$ such that $\max(T-t,1) \, (T-t) < \frac{m}{K}$, c.f. \er{eq:ass-M-V}, the value function $W_T(t,\cdot)$ of \er{eq:optimal-value} is well defined and real valued.
\end{theorem}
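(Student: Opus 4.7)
The plan is to show that, under the stated smallness condition on $T-t$, the cost functional $J_T(t,x,\cdot)$ is (uniformly) strongly convex on the Hilbert space $\cU[t,T]$, from which real-valuedness of the infimum follows at once. The upper bound $W_T(t,x)<+\infty$ is immediate by taking the admissible control $u\equiv 0$: then \er{eq:dynamics} yields $\bar x_s\equiv x$ on $[t,T]$ and hence $J_T(t,x,0) = -(T-t)\,V(x)+\psi(x)\in\R$.

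For the lower bound, first note that $u\mapsto \bar x_\cdot$ is affine and continuous from $\cU[t,T]$ into $C([t,T];\cX)$, so by the chain rule together with the $C^4$ regularity of $V$ and $\psi$ the functional $u\mapsto J_T(t,x,u)$ is twice continuously \Frechet\ differentiable on $\cU[t,T]$. A direct computation gives, for any $h\in\cU[t,T]$ and with $H_s\doteq\int_t^s h_\sigma\,d\sigma$,
\begin{align*}
D^2_u J_T(t,x,u)[h,h] \;=\; \int_t^T \langle h_s,\,\op{M}\,h_s\rangle\,ds \;-\; \int_t^T \langle H_s,\,\grad^2 V(\bar x_s)\,H_s\rangle\,ds \;+\; \langle H_T,\,\grad^2 \psi(\bar x_T)\,H_T\rangle.
\end{align*}
Using the coercivity of $\op{M}$ and the Hessian bounds in \er{eq:ass-M-V}, combined with the elementary estimates $\|H_s\|^2 \le (s-t)\,\|h\|_{\cU[t,T]}^2$ and $\int_t^T (s-t)\,ds = (T-t)^2/2$, this yields
\begin{align*}
D^2_u J_T(t,x,u)[h,h] \;\ge\; \bigl[m - \tfrac{K}{4}(T-t)^2 - \tfrac{K}{2}(T-t)\bigr]\,\|h\|_{\cU[t,T]}^2.
\end{align*}
The hypothesis $\max(T-t,1)\,(T-t) < m/K$ forces both $K(T-t)<m$ and $K(T-t)^2<m$, so the bracket is uniformly bounded below by some $c\ge m/4>0$ independent of $u$.

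Strong convexity now gives, via Taylor's theorem with second-order remainder, the uniform estimate $J_T(t,x,u) \ge J_T(t,x,0) + \langle DJ_T(t,x,0),\,u\rangle + \tfrac{c}{2}\|u\|_{\cU[t,T]}^2$. Minimizing the right-hand side over $u$ (equivalently, completing the square) produces $W_T(t,x) \ge J_T(t,x,0) - \tfrac{1}{2c}\|DJ_T(t,x,0)\|_{\cU[t,T]}^2 > -\infty$, which combined with the upper bound gives $W_T(t,x)\in\R$. The step requiring most care is ensuring that the Hessian lower bound is uniform with respect to the base control $u$; this uniformity is precisely what is delivered by the globally bounded curvature of $V$ and $\psi$ assumed in \er{eq:ass-M-V}, and it is why the smallness of $T-t$ alone (without any a priori bound on $u$) suffices.
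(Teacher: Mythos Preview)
Your proof is correct and follows essentially the same strategy as the paper's: establish a uniform positive lower bound on the second \Frechet\ derivative of $J_T(t,x,\cdot)$ using the coercivity of $\op{M}$ and the curvature bounds \er{eq:ass-M-V}, then invoke Taylor's theorem to deduce coercivity of the functional and hence finiteness of the infimum. The only cosmetic difference is that the paper packages the Hessian as $\op{M}-\Delta_T(t,x,u)$ and bounds $\|\Delta_T\|_{\bo(\cU[t,T])}\le K\max(T-t,1)(T-t)$ via H\"older, whereas you estimate the quadratic form directly using $\|H_s\|^2\le (s-t)\|h\|_{\cU[t,T]}^2$, yielding the slightly sharper constant $m/4$ in place of $m-K\max(T-t,1)(T-t)$; both routes lead to the same conclusion under the stated smallness hypothesis.
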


The proof of Theorem \ref{thm:optimal-value} uses the following lemma.

\begin{lemma}
\label{lem:cost-pre-coercive}
Given arbitrary $T\in\R_{\ge 0}$, $t\in[0,T]$, $x\in\cX$, and $u\in\cU[t,T]$, cost $J_T(t,x,\cdot):\cU[t,T]\rightarrow\R$ is twice {\Frechet} differentiable, with second derivative $D_u\grad_u J_T(t,x,u)\in\bo(\cU[t,T])$ given by
\begin{align}
	\label{eq:diff-J}
	& D_u \grad_u J_T(t,x,u)\, \delta_u
	= (\op{M} - \Delta_T(t,x,u))\, \delta_u
\end{align}
for all $\delta_u\in\cU[t,T]$, in which $\grad_u J_T(t,x,u)\in\cU[t,T]$ denotes the Riesz representation of the first {\Frechet} derivative at $u$, with $\Delta_T(t,x,u)\in\bo(\cU[t,T])$ given by
\begin{align}
	[\Delta_T(t,x,u)\, \delta_u]_r
	& \doteq \int_t^T \left[ \int_{r\vee\rho}^T \grad^2 V(\bar x_\sigma) \, d\sigma - \grad^2 \psi(\bar x_T) \right] [\delta_u]_\rho\  d\rho
	\nn
\end{align}
for all $r\in[t,T]$, $\delta_u\in\cU[t,T]$.

Moreover, 
\begin{align}
	& \langle \delta_u, \, D_u \grad_u J_T(t,x,u)\, \delta_u \rangle_{\cU[t,T]}
	\ge K\, (\ts{\frac{m}{K}} - \max (T-t,1)\, (T-t))\, \|  \delta_u \|_{\cU[t,T]}^2
	\label{eq:pre-coercive}
\end{align}
for all $\delta_u\in\cU[t,T]$, so that $D_u\grad_u J_T(t,x,u)$ is coercive and $J_T(t,x,\cdot)$ is strictly convex and proper, provided that $T-t\in\R_{\ge 0}$ is sufficiently small.
\end{lemma}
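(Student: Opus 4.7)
The plan is to proceed in three stages. First, one establishes twice-{\Frechet}-differentiability of $J_T(t,x,\cdot)$ and verifies the explicit form \er{eq:diff-J} by direct Taylor expansion of the cost. Second, one derives the bound \er{eq:pre-coercive} through a Fubini-type change in the order of time integration. Third, strict convexity and properness follow as standard consequences.

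For the first stage, I would perturb $u\mapsto u+\delta_u$; since $\bar{x}_s$ depends affinely on $u$, it shifts by $\int_t^s [\delta_u]_\sigma\,d\sigma$. The kinetic term in \er{eq:cost} is exactly quadratic and contributes $\int_t^T\langle\op{M}u_s,[\delta_u]_s\rangle\,ds+\tfrac12\langle\delta_u,\op{M}\delta_u\rangle_{\cU[t,T]}$. For the potential and terminal terms I would apply Taylor's theorem, using the $C^4$ regularity and the uniform norm bounds in \er{eq:ass-M-V} on $\grad^2V$, $D\grad^2V$, and $\grad^2\psi$ to control the remainders. A Fubini exchange on the double time integral produced by the potential yields the Riesz representation $[\grad_uJ_T(t,x,u)]_r=\op{M}u_r-\int_r^T\grad V(\bar{x}_\sigma)\,d\sigma+\grad\psi(\bar{x}_T)$. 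Differentiating this gradient once more in the direction $\delta_u$ gives the Gateaux derivative, and a second Fubini exchange converting the inner $\tau$-integral to an outer one---with the residual $\sigma$-range becoming $[r\vee\tau,T]$---puts the result exactly in the form $(\op{M}-\Delta_T(t,x,u))\delta_u$. Boundedness of $\Delta_T$ follows from the operator-norm bounds obtained in the second stage, and together with continuity in $u$ upgrades the Gateaux derivative to {\Frechet}.

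For the second stage, I would swap the order of integration in $\langle\delta_u,\Delta_T(t,x,u)\delta_u\rangle_{\cU[t,T]}$. Setting $y_\sigma\doteq\int_t^\sigma[\delta_u]_r\,dr$, the double integral collapses to $\int_t^T\langle y_\sigma,\grad^2V(\bar{x}_\sigma)y_\sigma\rangle\,d\sigma-\langle y_T,\grad^2\psi(\bar{x}_T)y_T\rangle$. Applying the Cauchy--Schwarz estimate $\|y_\sigma\|_\cX^2\le(\sigma-t)\|\delta_u\|_{\cU[t,T]}^2$ together with the $K/2$ bounds from \er{eq:ass-M-V} gives $|\langle\delta_u,\Delta_T\delta_u\rangle_{\cU[t,T]}|\le K\bigl[\tfrac{(T-t)^2}{4}+\tfrac{T-t}{2}\bigr]\|\delta_u\|_{\cU[t,T]}^2$, which is dominated by $K\max(T-t,1)(T-t)\|\delta_u\|_{\cU[t,T]}^2$ in both the $T-t\le 1$ and $T-t\ge 1$ regimes. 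Combining with the direct lower bound $\langle\delta_u,\op{M}\delta_u\rangle_{\cU[t,T]}\ge m\|\delta_u\|_{\cU[t,T]}^2$ (a pointwise consequence of the first line of \er{eq:ass-M-V}) yields \er{eq:pre-coercive}.

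For the third stage, the uniform positivity of the Hessian quadratic form $\langle\delta_u,D_u\grad_uJ_T(t,x,u)\delta_u\rangle$ for small $T-t$ implies strict convexity by integrating the second derivative along the segment between two distinct controls; properness is automatic since $J_T(t,x,\cdot)$ is everywhere real-valued on $\cU[t,T]$. The main difficulty I anticipate is not conceptual but careful bookkeeping: aligning the two Fubini exchanges so that the $r\vee\rho$ indices match the statement of \er{eq:diff-J}, and tracking the Cauchy--Schwarz constants tightly enough that the final estimate simplifies to the symmetric expression $\max(T-t,1)(T-t)$ rather than a looser sum.
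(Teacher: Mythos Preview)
Your first two stages are correct and essentially follow the paper's route, with two cosmetic differences: the paper obtains \er{eq:diff-J} by citing \cite[Theorem~3.6]{DM1:16} rather than redoing the Taylor/Fubini computation, and it bounds the \emph{operator norm} $\|\Delta_T(t,x,u)\|_{\bo(\cU[t,T])}\le K\max(T-t,1)(T-t)$ directly via H\"older, whereas you bound the quadratic form $\langle\delta_u,\Delta_T\delta_u\rangle$ through the primitive $y_\sigma$. Your route yields the slightly sharper constant $K[(T-t)^2/4+(T-t)/2]$ before majorising, but both arrive at \er{eq:pre-coercive} the same way once combined with $\langle\delta_u,\op{M}\delta_u\rangle\ge m\|\delta_u\|^2$.

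The one place to tighten is your third stage. You read ``proper'' as ``real-valued'', which is trivially true, but the paper intends (and proves) the stronger growth condition $\lim_{\|\hat u\|\to\infty}J_T(t,x,\hat u)=\infty$, obtained by a second-order Taylor expansion of $J_T(t,x,\cdot)$ about $u$ together with the uniform Hessian lower bound you have already established. This coercivity at infinity is what Theorem~\ref{thm:optimal-value} actually needs to conclude that the infimum in \er{eq:optimal-value} is attained; strict convexity and finiteness alone do not suffice on the infinite-dimensional space $\cU[t,T]$. The fix is immediate from your Stage~2 bound, so this is a gap in exposition rather than in method.
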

\begin{proof}
Fix $T\in\R_{\ge 0}$, $t\in[0,T]$, $x\in\cX$, and $u\in\cU[t,T]$. H\"older's inequality and the second inequality in \er{eq:ass-M-V} yield $\Delta_T(t,x,u)\in\bo(\cU[t,T])$, with 
\begin{align}
	& \| \Delta_T(t,x,u) \|_{\bo(\cX)} \le K \max(T-t,1)\, (T-t).
	\label{eq:Delta-bound}
\end{align}
Twice {\Frechet} differentiability of $J_T(t,x,\cdot):\cU[t,T]\rightarrow\R$, boundedness of the second derivative, i.e. $D_u\grad_u J_T(t,x,u)\in\bo(\cU[t,T])$, and \er{eq:diff-J}, subsequently follow by a minor generalization of \cite[Theorem 3.6]{DM1:16}. Combining \er{eq:Delta-bound} with the first inequality in \er{eq:ass-M-V} via Cauchy-Schwartz and \er{eq:diff-J} yields
\begin{align}
	\langle \delta_u, \, D_u \grad_u J_T(t,x,u)\, \delta_u \rangle_{\cU[t,T]}
	& = \langle \delta_u, \, (\op{M} - \Delta_T(t,x,u))\, \delta_u \rangle_{\cU[t,T]}
	\nn\\
	& \ge \langle \delta_u, (m - K \max(T-t,1)\, (T-t))\, \delta_u \rangle_{\cU[t,T]}
	\nn\\
	& = K\, (\ts{\frac{m}{K}} - \max(T-t,1)\, (T-t))\, \|  \delta_u \|_{\cU[t,T]}^2,
	\nn
\end{align}
for all $\delta_u\in\cU[t,T]$, which is \er{eq:pre-coercive}. By inspection, for sufficiently short time horizons, i.e. 
$
	\max(T-t,1)\, (T-t) < \ts{\frac{m}{K}}
$,
it follows that $D_u \grad_u J_T(t,x,u)\in\bo(\cU[s,t])$ is coercive, so that $J_T(t,x,\cdot)$ is strictly convex. Given an arbitrary $\hat u\in\cU[t,T]$, and $\tilde u \doteq \hat u - u\in\cU[t,T]$, Taylor's theorem further implies that
\begin{align}
	J_T(t,x,\hat u)
	& = J_T(t,x,u) + \langle \tilde u, \grad_u J_T(t,x,u) \rangle_{\cU[t,T]}
	\nn\\
	& \qquad + \left\langle \tilde u, \, \left( \int_0^1 (1-\sigma)\, D_u \grad_u J_T(t,x,u + \sigma\, \tilde u) \, d\sigma \right) \tilde u
	\right\rangle_{\cU[t,T]}
	\nn\\
	& \ge J_T(t,x,u) + \langle \tilde u, \grad_u J_T(t,x,u) \rangle_{\cU[t,T]}
	+ \demi\, K\, (\ts{\frac{m}{K}} - \max(T-t,1)\, (T-t))\, \|  \tilde u \|_{\cU[t,T]}^2.
	\nn
\end{align}
Hence, $\lim_{\|\hat u\|_{\cU[t,T]} \rightarrow \infty} J_T(t,x,\hat u) = \infty$, and
$J_T(t,x,\cdot)$ is proper.
\end{proof}

\begin{proof}[{Proof of Theorem \ref{thm:optimal-value}}]
Fix $T\in\R_{\ge 0}$, $t\in[0,T]$ as per the theorem statement, and any $x\in\cX$. Lemma \ref{lem:cost-pre-coercive} implies that the cost $J_T(t,x,\cdot):\cU[t,T]\rightarrow\R$ of \er{eq:cost} is strictly convex and proper. Hence, the value function \er{eq:optimal-value} is well defined, with the infimum achieved via a minimum, thereby yielding a real valued optimal cost.
\end{proof}

\begin{remark}
	Theorem \ref{thm:optimal-value} ensures that, for sufficiently short time horizons, the principle of stationary action can be formulated as a {\em least} action principle, via the optimal control problem defined by the value function \er{eq:optimal-value}. Applying standard tools from optimal control \cite{BCD:97,CS:04}, this value function may subsequently be characterized via the viscosity solution of a Hamilton-Jacobi-Bellman (HJB) partial differential equation (PDE). 
	\hfill{$\square$}
\end{remark}

\begin{remark}
	With $\cX$ finite dimensional, $V$ and $\psi$ bounded, and $T\in\R_{\ge 0}$, 
	\cite[Theorems 5.2.12, 7.4.14]{CS:04} implies that
	the value function $W_T$ of \er{eq:optimal-value} is the unique viscosity solution of 
	\begin{gather}
	\label{eq:HJB}
	\left\{
	\begin{aligned}
	&- \pdtone{W_T}{t}(t,x) + H(x,\grad_x W_T(t,x))=0 &&(t,x)\in [0,T]\times \cX
	\\
	&W_T(T,x) = \psi(x) &&x\in \cX,
	\end{aligned}
	\right.
	\end{gather}
	in which the Hamiltonian $H:\cX\times\cX\rightarrow\R$ is given via completion of squares by
	\begin{align}
	H(x,p)
	& \doteq V(x) + \demi\, \langle p, \, \op{M}^{-1}\, p \rangle
	= V(x) + \sup_{u\in\cX}\left\{ - \langle p, \, u \rangle - \demi\, \langle u,\, \op{M}\, u \rangle \right\}
	\label{eq:H}
	\end{align}
	for all $x,p\in\cX$.	
	Alternatively, boundedness of $V$ and $\psi$ can be omitted for problems for which $u\in\cU$ where $\cU$ is a compact subset of $\cX$, see \cite[Theorem 7.4.14]{CS:04}.
	\hfill{$\square$}
\end{remark}


The characteristic system associated with \er{eq:HJB} is
\begin{align}
	\label{eq:char}
	\left\{ \begin{aligned}
	\dot {\bar x}_s
	& = -\grad_p H(\bar x_s, \bar p_s) = \bar u_s \doteq -\op{M}^{-1}\, \bar p_s,
	& \bar x_T & = y,
	\\
	\dot {\bar p}_s
	& = \grad_x H(\bar x_s, \bar p_s) = \grad V(\bar x_s),
	& \bar p_T & = \grad \psi(y),
	\\
	\dot {\bar z}_s
	& = -\langle \bar p_s, \grad_p H(\bar x_s, \bar p_s) \rangle + H(\bar x_s, \bar p_s) 
	&&
	\\
	& \qquad = V(\bar x_s) - \demi\, \langle \bar p_s, \op{M}^{-1}\, \bar p_s \rangle,
	& \bar z_T & = \psi(y),
	\end{aligned}
	\right.
\end{align}
for all $s\in[t,T]$, $y\in\cX$, in which $\grad_x$ and $\grad_p$ denote the Riesz representations of the respective {\Frechet} derivatives. The first two equations in \er{eq:char} correspond to the equations of motion defined by the action principle. As formalised later in Lemma \ref{lem:X-classical}, these equations coupled with either initial or terminal data exhibit a unique classical solution. In particular, the second derivative $\ddot {\bar x}_s$ is well defined, with
\begin{align}
	& \ddot {\bar x}_s = -\op{M}^{-1} \, \dot {\bar p}_s = - \op{M}^{-1}\, \grad V(\bar x_s)
	\nn
\end{align}
for all $s\in[t,T]$, which is Newton's second law. Observe also that the Hamiltonian $H$ of \er{eq:H} corresponds to the total energy, i.e. the sum of potential and kinetic energies. As expected, the chain rule implies that
\begin{align}
	\ts{\ddtone{}{s}} H(\bar x_s, \bar p_s)
	& = \langle \grad_x H(\bar x_s, \bar p_s), \dot {\bar x}_s \rangle 
				+ \langle \grad_p H(\bar x_s, \bar p_s), \dot {\bar p}_s \rangle
	\nn\\
	& = -\langle \grad_x H(\bar x_s, \bar p_s), \grad_p H(\bar x_s, \bar p_s) \rangle 
	+ \langle \grad_p H(\bar x_s, \bar p_s), \grad_x H(\bar x_s, \bar p_s) \rangle
	\nn\\
	& = 0,
	\nn
\end{align}
for all $s\in[t,T]$, i.e. energy is conserved.


\section{Stationary action}
\label{sec:stationary}
The connection between least action and optimal control is known to break down for longer time horizons, due typically to a loss of convexity of the action encapsulated by the cost \er{eq:cost}. This can be seen in Lemma \ref{lem:cost-pre-coercive}, where the convexity guarantee provided for the cost \er{eq:cost} is no longer valid, thereby rendering the optimal control interpretation of Theorem \ref{thm:optimal-value} inapplicable. In practice, the value function \er{eq:optimal-value} involved experiences finite escape phenomena as the horizon increases and convexity of the cost is lost. 

On longer time horizons, it is well known that the stationary (rather than least) action principle continues to describe the motion of energy conserving systems. In order to encapsulate this description in a framework that is analogous to optimal control, the infimum operation appearing in \er{eq:optimal-value} is relaxed to a {\em stat} operation \cite{MD1:17,MD1:18}.

\begin{definition}\rm
	The {\em stat} operation, along with the corresponding {\em argstat} operation, with respect to a function $F\in C^1(\cZ;\R)$ is defined by
	\begin{align}
	\stat_{\zeta\in\cZ} F(\zeta)
	\doteq \left\{ F(\bar\zeta) \, \biggl| \, \bar\zeta\in\argstat_{\zeta\in\cZ} F(\zeta) \right\},
	\quad
	\argstat_{\zeta\in\cX} F(\zeta)
	\doteq \left\{ \zeta\in\cZ \, \biggl| \, 0 = \lim_{y\rightarrow\zeta} \frac{|F(y) - F(\zeta)|}{\|y - \zeta\|} \right\},
	\label{eq:stat}	
	\end{align}
	in which $\cZ$ is a Banach space. The elements in $\argstat_{\zeta\in\cX} F(\zeta)$ are called \textit{stationary points} for $F$.
\end{definition}
 Relaxing the infimum in \er{eq:optimal-value} to {\em stat} as indicated gives rise to the notion of a {\em stationary} control problem.


\subsection{Stationary control problems}
With $\cZ\doteq\cU[t,T]$, the relaxed (and possibly set-valued) {\em stationary} value function $\wt{W}_T$ of interest is defined by \cite{MD1:17,MD1:18,DM1:17,DM1:16}
\begin{align}
	\wt W_T(t,x)
	& \doteq \stat_{u\in\cU[t,T]} J_T(t,x,u),
	\label{eq:stat-u-value}
\end{align}
for all $t\in[0,T]$, $x\in\cX$, in which $J_T$ is the same cost \er{eq:cost}. The utility of \er{eq:stat-u-value}, relative to \er{eq:optimal-value}, in recovering the desired dynamics on arbitrary time horizons is illustrated via the following standard calculus of variations result.

\begin{theorem}
\label{thm:stat-u-char}
Given $T\in\R_{\ge 0}$, $t\in[0,T]$, $x\in\cX$, a velocity trajectory $\bar u\in\cU[t,T]$ renders the cost \er{eq:cost} stationary, i.e. $\bar u$ satisfies
\begin{gather}
	\label{eq:stat-u-char}
	\bar u\in\argstat_{u\in\cU[t,T]}  J_T(t,x,u),
\end{gather}
if and only if there exists a mild solution $(\bar x, \bar p)\in(\cU[t,T])^2$ of the TPBVP defined by \er{eq:char} with $\bar x_t = x$ fixed. Furthermore, $\bar u$ in \er{eq:stat-u-char} satisfies
\begin{align}
	\label{eq:stat-u-bar}
	\bar u_s & = -\op{M}^{-1}\, \bar p_s 
	\quad\text{a.e.}\quad s\in[t,T].
\end{align}
\end{theorem}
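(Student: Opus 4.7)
The plan is to characterize stationarity of $J_T(t,x,\cdot)$ at $\bar u$ via vanishing of its Fréchet derivative in $\cU[t,T]$, and then to recognize the resulting integral identity as the mild form of the TPBVP \er{eq:char} under the substitution $\bar u = -\op{M}^{-1}\bar p$.

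First, using the twice Fréchet differentiability established in Lemma \ref{lem:cost-pre-coercive} (specialized to the first derivative), I would compute the directional derivative of $J_T(t,x,\cdot)$ at any $u\in\cU[t,T]$ in an arbitrary direction $\delta_u\in\cU[t,T]$. Writing $\delta\bar x_s = \int_t^s \delta_{u,\sigma}\, d\sigma$ for the perturbation of the position trajectory, this is
\begin{align*}
\langle \delta_u,\grad_u J_T(t,x,u)\rangle_{\cU[t,T]}
&= \int_t^T \langle \op{M}\, u_s,\delta_{u,s}\rangle\, ds
-\int_t^T \langle \grad V(\bar x_s),\delta\bar x_s\rangle\, ds
+\langle \grad\psi(\bar x_T),\delta\bar x_T\rangle.
\end{align*}
A Fubini swap on the middle term and the identity $\delta\bar x_T=\int_t^T \delta_{u,\sigma}\, d\sigma$ on the last term then yield the pointwise representation
\begin{align*}
[\grad_u J_T(t,x,u)]_\sigma
= \op{M}\, u_\sigma - \int_\sigma^T \grad V(\bar x_s)\, ds + \grad\psi(\bar x_T)
\quad\text{a.e. } \sigma\in[t,T].
\end{align*}
By the definition of \er{eq:stat}, stationarity of $J_T(t,x,\cdot)$ at $\bar u$ is equivalent to $\grad_u J_T(t,x,\bar u)=0$ in $\cU[t,T]$, hence to the vanishing of this expression a.e.

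For the forward implication, suppose $\bar u$ satisfies \er{eq:stat-u-char}. Define $\bar p_s \doteq -\op{M}\bar u_s$, so that \er{eq:stat-u-bar} holds, and substitute in the vanishing identity to obtain
\begin{align*}
\bar p_s = \grad\psi(\bar x_T) - \int_s^T \grad V(\bar x_\sigma)\, d\sigma, \qquad s\in[t,T],
\end{align*}
with $\bar x$ given by \er{eq:dynamics} with $u=\bar u$ and initial value $x$. Evaluating at $s=T$ gives the terminal condition $\bar p_T=\grad\psi(\bar x_T)$, while the absolute continuity of the right-hand side in $s$ and differentiation give $\dot{\bar p}_s=\grad V(\bar x_s)$ a.e. Together with $\bar x_t=x$ and $\dot{\bar x}_s=\bar u_s=-\op{M}^{-1}\bar p_s$, this is precisely the mild form of the TPBVP \er{eq:char} with $y=\bar x_T$ left free but tied to $\bar p_T$ via $\bar p_T=\grad\psi(\bar x_T)$.

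For the converse, given a mild solution $(\bar x,\bar p)$ of the TPBVP with $\bar x_t=x$, set $\bar u_s\doteq -\op{M}^{-1}\bar p_s$; the $\bar x$ equation gives $\bar x_s = x+\int_t^s \bar u_\sigma\, d\sigma$, so $\bar x$ coincides with the position trajectory associated to $\bar u$ via \er{eq:dynamics}. The mild form of the $\bar p$ equation, combined with $\bar p_T=\grad\psi(\bar x_T)$, reads $\bar p_s = \grad\psi(\bar x_T)-\int_s^T \grad V(\bar x_\sigma)\, d\sigma$, and substituting $\op{M}\bar u_s=-\bar p_s$ into the gradient formula above makes $[\grad_u J_T(t,x,\bar u)]_s$ vanish a.e. Hence $\grad_u J_T(t,x,\bar u)=0$ in $\cU[t,T]$, which is \er{eq:stat-u-char}. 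The main technical obstacle is bookkeeping: justifying the Fubini swap and confirming that the a.e.\ pointwise identity in $\cU[t,T]$ is the correct transcription of Fréchet-vanishing on a Hilbert space (equivalently, Riesz representation), together with verifying that the regularity provided by \er{eq:ass-M-V} (in particular $\grad V$ locally Lipschitz) lets us interpret the derived integral equations as a bona fide mild solution in the sense required.
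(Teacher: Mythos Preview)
Your argument is correct and is exactly the standard calculus of variations computation: compute the Riesz representation of $D_u J_T(t,x,u)$ via Fubini, set it to zero, and read off the mild TPBVP \er{eq:char} under the substitution $\bar p_s=-\op{M}\bar u_s$. The paper itself does not prove this theorem in-line; it simply cites \cite[Theorem 3.9]{DM1:16}, so your proposal supplies precisely the details the paper defers to that reference.
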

\begin{proof}
See for example \cite[Theorem 3.9]{DM1:16}.
\end{proof}

Rather than focus immediately on a dynamic programming style approach to the synthesis of stationary trajectories \cite{MD1:17}, or to solving the TPBVP highlighted in Theorem \ref{thm:stat-u-char}, the aim is instead to consider an alternative to cost \er{eq:cost} that appeals directly to the underlying characteristics system \er{eq:char}. With this in mind, first observe that integration of the final equation of \er{eq:char} yields
\begin{align}
	\label{eq:z-t}
	\bar z_t 
	& 
	= \psi(y) + \int_t^T \demi\, \langle \bar u_s, \op{M}\, \bar u_s \rangle - V(\bar x_s) \, ds,
\end{align}
which is of the same form as the cost $J_T(t,x,\bar u)$ of \er{eq:cost}, provided that $\bar x_t = x$. For short horizons, this motivates an equivalent characterization of the optimal control value function \er{eq:optimal-value} as the unique {\em minimax solution} (also called {\em minimal selection}) \cite{S:95} of \er{eq:HJB}, given by
\begin{align}
	\label{eq:minimax-value}
	& \left\{ \begin{aligned}
	& W_T(t,x)
	= \inf_{y\in Y_{T}(t,x)} \bar z_t,
	\\
	& Y_{T}(t,x)
	\doteq \{ y\in\cX \, | \, \bar x_t = x, \, \bar x_T = y \}
	\end{aligned}
	\right.
\end{align}
for all $t\in[0,T]$, $x\in\cX$. In view of \er{eq:z-t}, and as per \cite{YD1:18}, \er{eq:minimax-value} may be recast as an optimization over an initial adjoint variable, i.e. 
\begin{align}
	W_T(t,x)
	& \doteq \inf_{p\in\cX} \bar J_T(t,x,p)
	\label{eq:optimal-p-value}
\end{align}
for all $t\in[0,T]$, $x\in\cX$, in which $\bar J_T(t,x,\cdot):\cX\rightarrow\R$ is the associated cost defined by
\begin{align}
	\bar J_T(t,x,p)
	& \doteq \int_t^T \demi\, \langle \bar p_s, \, \op{M}^{-1}\, \bar p_s \rangle - V(\bar x_s)\, ds + \psi(\bar x_T),
	\label{eq:p-cost}
\end{align}
for all $t\in[0,T]$, with respect to the Cauchy problem
\begin{align}
	\left\{ \begin{aligned}
	\dot {\bar x}_s & = -\op{M}^{-1}\, \bar p_s,
	& \bar x_t & = x,
	\\
	\dot {\bar p}_s & = \grad V(\bar x_s),
	& \bar p_t & = p,
	\end{aligned} \right.
	\label{eq:Cauchy-dynamics}
\end{align}
for all $s\in[t,T]$, $x,p\in\cX$, extracted from \er{eq:char}.

In view of \er{eq:optimal-p-value}, \er{eq:p-cost}, \er{eq:Cauchy-dynamics}, an alternative value function to \er{eq:stat-u-value} may be proposed by relaxing the infimum operation in \er{eq:optimal-p-value} to the stat operation \er{eq:stat}. In particular, define
the value function $\ol{W}_T$ by
\begin{align}
	\ol{W}_T(t,x)
	& \doteq \stat_{p\in\cX} \bar J_T(t,x,p)
	\label{eq:stat-p-value}
\end{align}
for all $T\in\R_{\ge 0}$, $t\in[0,T]$, $x\in\cX$, with cost $\bar J_T$ as per \er{eq:p-cost}, and the stat involved possibly set-valued.

The subsequent analysis is concerned with \er{eq:stat-p-value}, and in particular existence of the associated argstat and its relationship to corresponding argstat in Theorem \ref{thm:stat-u-char}. With this analysis in mind, it is convenient for brevity of notation to define $f:\cX^2\rightarrow\cX^2$, $l:\cX^2\rightarrow\R$, and $\Psi:\cX^2\rightarrow\R$ by
\begin{align}
	\label{eq:f}
	f(X)
	& \doteq \left( \ba{c}
		-\op{M}^{-1}\, p
		\\ 
		\grad V(x)
	\ea \right),
	\quad
	X \doteq Y_p(x) \doteq \left( \ba{c}	
				x \\ p
			\ea \right)\in\cX^2,
	\\
	l(X)
	& \doteq 
	\demi\, \langle p, \, \op{M}^{-1}\, p \rangle - V(x),
	\quad
	\Psi(X) 
	\doteq \psi(x).
	\nn
\end{align}
Note that \er{eq:p-cost}, \er{eq:Cauchy-dynamics}, \er{eq:stat-p-value} correspond to
\begin{align}
	\label{eq:X-cost}
	& \wt J_T(t,Y)
	= \bar J_T(t,x,p) = \int_t^T l(X_s) \, ds + \Psi(X_T),
	\\
	\label{eq:X-dynamics}
	& \dot X_s = f(X_s), \ s\in[t,T],
	\ X_t = Y \doteq Y_p(x), 
	\\
	\label{eq:X-value}
	& \ol{W}_T(t,x)
	= \stat_{p\in\cX} \wt J_T\left(t, Y_p(x) \right). 
\end{align}


\subsection{{\Frechet} differentiation of the cost functional}
The objective now is to characterise the argstat in \er{eq:stat-p-value} via differentiation of \er{eq:X-cost} via \er{eq:p-cost}. With this in mind, applying classical arguments as those in \cite[Chapter 5]{P:83}, some intermediate lemmas are useful, whose proofs are referred to Appendix \ref{proofs_lemmata}.

\begin{lemma}
\label{lem:X-classical}
Given any $T\in\R_{\ge 0}$, $t\in[0,T]$, $Y\in\cX^2$, the initial value problem \er{eq:X-dynamics} has a unique classical solution $\ol{X}(Y)\in C([t,T];\cX^2)\cap C^1((t,T);\cX^2)$.
\end{lemma}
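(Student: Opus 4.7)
The plan is to cast the initial value problem \er{eq:X-dynamics} as an ordinary differential equation in the Hilbert space $\cX^2$ and invoke the standard Picard--Lindel\"of / Banach contraction theorem for globally Lipschitz vector fields on Banach spaces, c.f.\ \cite{P:83}. The key preliminary fact is that the drift $f:\cX^2\rightarrow\cX^2$ defined in \er{eq:f} is globally Lipschitz. For its first component this is immediate from $\|\op{M}^{-1}\|_{\bo(\cX)}\le 1/m$, which follows from the coercivity bound in \er{eq:ass-M-V}. For its second component, the bound $\|\grad^2 V(x)\|_{\bo(\cX)}\le K/2$ (the $j=0$ case of the second inequality in \er{eq:ass-M-V}) together with the mean value inequality yields that $\grad V$ is globally Lipschitz on $\cX$ with constant $K/2$. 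Hence $f$ admits a Lipschitz constant $L\doteq\max(1/m,\,K/2)$ (up to a harmless factor depending on the product-space norm).

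Next I would fix $t\in[0,T]$ and $Y\in\cX^2$ and define the Picard map $\Phi:C([t,T];\cX^2)\rightarrow C([t,T];\cX^2)$ by
\begin{align}
(\Phi X)_s \doteq Y + \int_t^s f(X_\sigma)\,d\sigma,\quad s\in[t,T]. \nn
\end{align}
Equipping $C([t,T];\cX^2)$ with the weighted norm $\|X\|_\lambda\doteq \sup_{s\in[t,T]} e^{-\lambda (s-t)}\|X_s\|_{\cX^2}$ for $\lambda>L$, a direct estimate using the Lipschitz bound for $f$ shows $\|\Phi X - \Phi X'\|_\lambda \le (L/\lambda)\|X-X'\|_\lambda$, so $\Phi$ is a strict contraction. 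The Banach fixed point theorem then delivers a unique $\ol{X}(Y)\in C([t,T];\cX^2)$ satisfying the integral form of \er{eq:X-dynamics}.

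Finally, to upgrade this mild solution to a classical one in $C([t,T];\cX^2)\cap C^1((t,T);\cX^2)$, I would note that $f$ is continuous (in fact $C^1$ by the smoothness of $V$), so $\sigma\mapsto f(\ol{X}_\sigma(Y))$ is continuous on $[t,T]$; differentiating the fixed point identity $\ol{X}_s(Y)=Y+\int_t^s f(\ol{X}_\sigma(Y))\,d\sigma$ via the fundamental theorem of calculus yields $\dot{\ol{X}}_s(Y) = f(\ol{X}_s(Y))$ on $(t,T)$, giving the required regularity, and uniqueness follows from the uniqueness of the fixed point.

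There is no substantive obstacle here: the bounds in \er{eq:ass-M-V} were explicitly designed to make $f$ globally Lipschitz on all of $\cX^2$, which sidesteps any local-existence/blow-up issues that would otherwise require care in an infinite-dimensional setting. The only mild annoyance is bookkeeping the product-space norm on $\cX^2$ and ensuring the Lipschitz constant consolidates properly, but this is routine and can be absorbed into the choice of $\lambda$ above.
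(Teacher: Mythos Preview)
Your proposal is correct and follows essentially the same route as the paper: the paper's proof simply notes that $f$ in \er{eq:f} is globally Lipschitz (via the $j=0$ bound on $\grad^2 V$ in \er{eq:ass-M-V} and the bounded invertibility of $\op{M}$) and invokes the standard fixed point argument, citing \cite[Theorem 5.1, p.127]{P:83}. Your use of a weighted sup-norm to obtain a one-step contraction, rather than iterating the Picard map, is a standard and equivalent variant.
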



\begin{lemma}
\label{lem:X-of-Y-cts}
The map $Y\mapsto \ol{X}(Y)$ defined via the unique classical solution of Lemma \ref{lem:X-classical} is continuous, i.e. $\ol{X}\in C(\cX^2;C([t,T];\cX^2))$. In particular, there exists an $\alpha\in\R_{\ge 0}$ such that 
\begin{align}
	& \|\ol{X}(Y+h) - \ol{X}(Y)\|_\infty \le \|h\|\, \exp(\alpha\, (T-t))
	\label{eq:X-of-Y-cts}
\end{align}
for all $Y,h\in\cX^2$.
\end{lemma}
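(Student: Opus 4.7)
The plan is to exploit that the right-hand side $f$ of \er{eq:X-dynamics} is globally Lipschitz on $\cX^2$ and then invoke Gronwall's inequality applied to the integral form of the two solutions emanating from $Y$ and $Y+h$.

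First, I would establish Lipschitz continuity of $f$. Writing $f(Y_p(x)) = (-\op{M}^{-1} p, \grad V(x))$, the coercivity of $\op{M}$ in \er{eq:ass-M-V} implies $\op{M}^{-1}\in\bo(\cX)$ with $\|\op{M}^{-1}\|_{\bo(\cX)} \le 1/m$, while the uniform bound on $\grad^2 V$ in \er{eq:ass-M-V} yields $\|\grad V(x_1) - \grad V(x_2)\| \le (K/2)\, \|x_1 - x_2\|$ by the mean value theorem. Combining these two bounds produces a constant $\alpha\in\R_{\ge 0}$ (e.g.\ $\alpha = \max(1/m, K/2)$, up to a harmless factor arising from the product norm on $\cX^2$) such that $\|f(X) - f(\wt X)\|_{\cX^2} \le \alpha \, \|X - \wt X\|_{\cX^2}$ for all $X,\wt X\in\cX^2$.

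Next, Lemma \ref{lem:X-classical} provides unique classical solutions $\ol X(Y),\,\ol X(Y+h)\in C([t,T];\cX^2)$, so both satisfy the integral equation
\begin{align}
\ol X_s(Y) & = Y + \int_t^s f(\ol X_\sigma(Y))\, d\sigma, \quad s\in[t,T], \nn
\end{align}
and similarly for $Y+h$. Subtracting and taking norms gives
\begin{align}
\|\ol X_s(Y+h) - \ol X_s(Y)\|_{\cX^2} & \le \|h\|_{\cX^2} + \alpha \int_t^s \|\ol X_\sigma(Y+h) - \ol X_\sigma(Y)\|_{\cX^2}\, d\sigma. \nn
\end{align}
Applying Gronwall's inequality then yields $\|\ol X_s(Y+h) - \ol X_s(Y)\|_{\cX^2} \le \|h\|_{\cX^2}\, \exp(\alpha\, (s-t))$ for every $s\in[t,T]$, and taking the supremum over $s\in[t,T]$ delivers exactly \er{eq:X-of-Y-cts}. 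Continuity of $Y\mapsto \ol X(Y)$ as a map into $C([t,T];\cX^2)$ follows at once by letting $\|h\|_{\cX^2}\to 0$.

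This argument is essentially routine once the Lipschitz estimate on $f$ is secured; there is no real obstacle, though one should be mindful to verify that the bound on $\op{M}^{-1}$ comes from the coercivity inequality in \er{eq:ass-M-V} rather than being postulated separately, and that the product-space norm constants are absorbed into the single constant $\alpha$.
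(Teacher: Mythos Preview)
Your proposal is correct and follows essentially the same argument as the paper: write both solutions in integral form, use the global Lipschitz continuity of $f$ (which the paper simply notes ``by inspection of \er{eq:f}''), and apply Gronwall's inequality. If anything, you supply slightly more detail on why $f$ is Lipschitz than the paper does.
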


%

\begin{lemma}
\label{lem:X-of-Y-dble}
The map
$Y\mapsto\ol{X}(Y)\in C(\cX^2;C([t,T];\cX^2))$ of Lemma \ref{lem:X-of-Y-cts} is {\Frechet} differentiable with derivative given by
\begin{align}
	& D\ol{X}(Y)\in\bo(\cX^2;C([t,T];\cX^2)), 
	\qquad
	[D \ol{X}(Y)\, h]_s = U_{s,t}(Y) \, h,
	\quad s\in[t,T], 
	\label{eq:D-X-of-Y}
\end{align}
for all $Y,h\in\cX^2$, in which $U_{s,r}(Y)\in\bo(\cX^2)$, $r,s\in[t,T]$, is an element of the two-parameter family of evolution operators generated by $A_s\in\bo(\cX^2)$, i.e.
\begin{align}
	U_{s,r}\, h = U_{s,r}(Y)\, h
	& = h + \int_r^s A(Y)_\sigma\, U_{\sigma, r}(Y)\, h \, d\sigma,
	\qquad
	\forall \ r,s\in[t,T], \ h\in\cX,
	\label{eq:two-parameter}
\end{align}
in which $s\mapsto A(Y)_s$ is defined uniquely, given $Y$, by
\begin{align}
	& A(Y)_s \doteq \Lambda(\ol{X}(Y)_s) \doteq 
		\left( \ba{cc}
			0 & -\op{M}^{-1} \\
			\grad^2 V(\bar x(Y)_s) & 0
		\ea \right)\!,
	\label{eq:generator}
\end{align}
for all $s\in[t,T]$, and $\ol{X}(Y) = \left( \ba{c} \bar x(Y) \\ \bar p(Y) \ea \right)$.
\end{lemma}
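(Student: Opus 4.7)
The plan is to treat Lemma \ref{lem:X-of-Y-dble} as a standard linearisation result for the nonlinear flow \er{eq:X-dynamics} about the reference trajectory $\ol{X}(Y)$: first establish that $U_{s,r}(Y)$ from \er{eq:two-parameter} is well defined and exponentially bounded, and then verify {\Frechet} differentiability via a Taylor expansion of $f$ along $\ol{X}(Y)$ combined with Gr\"onwall's inequality.

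\textbf{Step 1 (evolution operator).} By Lemma \ref{lem:X-classical} and assumption \er{eq:ass-M-V}, the map $\sigma\mapsto A(Y)_\sigma$ is continuous on $[t,T]$ with $\|A(Y)_\sigma\|_{\bo(\cX^2)}\le\kappa$ for some constant $\kappa$ depending only on $\|\op{M}^{-1}\|_{\bo(\cX)}$ and $K/2$. Standard Picard iteration on \er{eq:two-parameter} in $\bo(\cX^2)$ then produces a unique two-parameter family $\{U_{s,r}(Y)\}_{r,s\in[t,T]}$, and a direct Gr\"onwall estimate yields $\|U_{s,r}(Y)\|_{\bo(\cX^2)}\le e^{\kappa(s-r)}$.

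\textbf{Step 2 ({\Frechet} derivative).} Fix $Y,h\in\cX^2$, write $X:=\ol{X}(Y)$, $X^h:=\ol{X}(Y+h)$, and consider the putative remainder
\[
R_s \doteq X^h_s - X_s - U_{s,t}(Y)\, h, \qquad s\in[t,T].
\]
Subtracting the integral forms of $X^h$, $X$, and \er{eq:two-parameter} gives
\[
R_s = \int_t^s \bigl[ f(X^h_\sigma) - f(X_\sigma) - A(Y)_\sigma\, U_{\sigma,t}(Y)\, h \bigr]\, d\sigma.
\]
A direct computation shows $Df(X)=\Lambda(X)$; since the upper block of $f$ is affine in $p$ and \er{eq:ass-M-V} bounds $D\grad^2 V$, the map $Df$ is globally Lipschitz on $\cX^2$ with some constant $L$. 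Taylor's theorem then supplies the uniform remainder bound $\| f(X^h_\sigma) - f(X_\sigma) - \Lambda(X_\sigma)(X^h_\sigma - X_\sigma) \| \le \tfrac{L}{2}\, \|X^h_\sigma - X_\sigma\|^2$. Substituting and using $A(Y)_\sigma=\Lambda(X_\sigma)$ yields $R_s = \int_t^s A(Y)_\sigma\, R_\sigma\, d\sigma + \eta_s$ with $\|\eta_s\|\le \tfrac{L}{2}\int_t^s \|X^h_\sigma-X_\sigma\|^2\, d\sigma$. Lemma \ref{lem:X-of-Y-cts} controls $\|X^h_\sigma-X_\sigma\|\le \|h\|\,e^{\alpha(T-t)}$, so $\|\eta_s\|\le C_1 \|h\|^2$ uniformly in $s$ for a constant $C_1$ independent of $h$. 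A second Gr\"onwall application then gives $\|R\|_\infty \le C\,\|h\|^2$, which is $o(\|h\|)$ as $\|h\|\to 0$. This establishes \er{eq:D-X-of-Y}; boundedness of $h\mapsto[s\mapsto U_{s,t}(Y)h]$ as an element of $\bo(\cX^2;C([t,T];\cX^2))$ follows from Step 1.

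The main technical obstacle is the global Lipschitz estimate for $Df=\Lambda$ in Step 2, which depends essentially on the uniform bound on $\grad^3 V$ supplied by the $j=1$ case of the middle inequality of \er{eq:ass-M-V}; without global control of $\grad^3 V$, the Taylor remainder would only be locally quadratic with a coefficient potentially blowing up as $\ol{X}(Y)$ traverses $[t,T]$, obstructing the uniform-in-$\sigma$ bound on $\eta_s$.
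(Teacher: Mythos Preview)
Your proposal is correct and follows essentially the same line as the paper's proof: both identify $A(Y)_\sigma = Df(\ol{X}(Y)_\sigma)$, write the remainder $R_s = \ol{X}(Y+h)_s - \ol{X}(Y)_s - U_{s,t}(Y)h$ as an integral, control the Taylor remainder of $f$ via the uniform bound on $D^2 f$ supplied by the $j=1$ case of \er{eq:ass-M-V}, and close with Gr\"onwall. The only cosmetic difference is that the paper packages the Taylor step through an auxiliary operator $\bar I_f(X)_s \doteq \int_t^s f(X_\sigma)\,d\sigma$ and bootstraps $\|X^h - X\|_\infty$ from $\|R\|_\infty + \|U_{\cdot,t}(Y)h\|_\infty$, whereas you invoke Lemma~\ref{lem:X-of-Y-cts} directly---your route is slightly more streamlined but equivalent.
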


\begin{lemma}
\label{lem:U-dble}
Given $T\in\R_{>0}$, $t\in[0,T)$, the map $Y\mapsto U_{s,r}(Y)$ is continuously {\Frechet} differentiable, uniformly in $r,s,\in[t,T]$.
\end{lemma}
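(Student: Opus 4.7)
The plan is to derive a variational equation for the candidate \Frechet derivative $D U_{s,r}(Y) \in \bo(\cX^2; \bo(\cX^2))$, solve it, verify it is indeed the derivative via a Gronwall-type remainder estimate, and establish continuous dependence on $Y$ by a further Gronwall argument. Uniformity in $r,s \in [t,T]$ will follow from the uniform operator bound $\|A(Y)_\sigma\|_{\bo(\cX^2)} \le \max(1/m, K/2)$ afforded by \er{eq:ass-M-V} and \er{eq:generator}, which via Gronwall also yields $\|U_{s,r}(Y)\|_{\bo(\cX^2)} \le e^{\alpha(T-t)}$ with $\alpha\in\R_{\ge 0}$ independent of $Y$ and of $r,s$.

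First, I would verify that $Y\mapsto A(Y)_\sigma = \Lambda(\ol{X}(Y)_\sigma)$ is continuously \Frechet differentiable, uniformly in $\sigma\in[t,T]$. Since $V\in C^4(\cX;\R)$, the map $\Lambda:\cX^2\rightarrow\bo(\cX^2)$ from \er{eq:generator} is itself continuously \Frechet differentiable, with derivative uniformly bounded by \er{eq:ass-M-V}. The chain rule combined with Lemma \ref{lem:X-of-Y-dble} then gives $D_Y A(Y)_\sigma\, h = D\Lambda(\ol{X}(Y)_\sigma)\, U_{\sigma,t}(Y)\, h$ for $h\in\cX^2$, with continuity in $Y$ that is uniform in $\sigma$.

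Second, formally differentiating \er{eq:two-parameter} suggests that $DU_{s,r}(Y)$ should coincide with the unique solution $\op{V}_{s,r}(Y) \in \bo(\cX^2; \bo(\cX^2))$ of the linear integral equation
\begin{align*}
\op{V}_{s,r}(Y)\, h = \int_r^s \bigl(D_Y A(Y)_\sigma \, h\bigr)\, U_{\sigma,r}(Y)\, d\sigma + \int_r^s A(Y)_\sigma\, \op{V}_{\sigma,r}(Y)\, h\, d\sigma, \qquad h\in\cX^2,
\end{align*}
which I solve by Picard iteration and bound uniformly by Gronwall, obtaining $\|\op{V}_{s,r}(Y)\| \le \beta\, e^{\alpha(T-t)}$ for constants $\alpha,\beta \in \R_{\ge 0}$ independent of $Y,r,s$. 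To verify $\op{V}_{s,r}(Y) = DU_{s,r}(Y)$, I subtract \er{eq:two-parameter} for $Y+h$ and $Y$ from the above; the remainder $R_{s,r}(h) \doteq U_{s,r}(Y+h) - U_{s,r}(Y) - \op{V}_{s,r}(Y)\,h$ then satisfies
\begin{align*}
R_{s,r}(h) = \int_r^s A(Y)_\sigma\, R_{\sigma,r}(h)\, d\sigma + \int_r^s \rho_\sigma(h)\, d\sigma,
\end{align*}
where $\rho_\sigma(h)$ collects the \Frechet remainder of $A$ at $Y$ applied to $U_{\sigma,r}(Y+h)$, plus the cross term $(A(Y+h)_\sigma - A(Y)_\sigma)(U_{\sigma,r}(Y+h) - U_{\sigma,r}(Y))$. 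Each of these is $o(\|h\|)$ uniformly in $\sigma,r$ by the first step and by Lemma \ref{lem:X-of-Y-cts}, so Gronwall delivers $\|R_{s,r}(h)\|_{\bo(\cX^2)} = o(\|h\|)$ uniformly in $r,s$.

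Finally, continuity of $Y\mapsto \op{V}_{s,r}(Y)$, uniform in $r,s$, follows from yet another Gronwall estimate applied to $\op{V}_{s,r}(Y+k) - \op{V}_{s,r}(Y)$, using continuity of $Y\mapsto A(Y)_\sigma$, $Y\mapsto D_Y A(Y)_\sigma$, and $Y\mapsto U_{\sigma,r}(Y)$ on $Y$ (the last of which is provided by Lemma \ref{lem:X-of-Y-cts}). The main obstacle is the bookkeeping in nested operator spaces: $\op{V}_{s,r}(Y) \in \bo(\cX^2; \bo(\cX^2))$, and every Gronwall argument must be executed in the appropriate operator norm while preserving uniformity in $r,s$. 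Provided this is handled carefully, the proof reduces to a standard infinite-dimensional analogue of the classical smooth dependence of a linear ODE's fundamental solution on parameters.
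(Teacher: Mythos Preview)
Your proposal is correct and follows essentially the same route as the paper: define the variational integral equation for the candidate derivative $\op{V}_{s,r}(Y)$, control the remainder $U_{s,r}(Y+h)-U_{s,r}(Y)-\op{V}_{s,r}(Y)h$ via Gronwall using Lipschitz/\Frechet\ estimates on $A(Y)_\sigma$, and finish with a further Gronwall argument for continuity of $\op{V}$ in $Y$. Two minor bookkeeping points: the paper first establishes the uniform Lipschitz bound $\|U_{s,r}(Y+h)-U_{s,r}(Y)\|\le C\|h\|$ (this is what makes your cross term $o(\|h\|)$, not Lemma~\ref{lem:X-of-Y-cts} directly), and the clean decomposition of $\rho_\sigma(h)$ has the \Frechet\ remainder of $A$ acting on $U_{\sigma,r}(Y)$ rather than $U_{\sigma,r}(Y+h)$---but either variant works.
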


Next, {\Frechet} regularity of the cost functional is demonstrated.

\begin{proposition}
\label{prop:X-cost-dble}
Given $T\in\R_{\ge 0}$, the map $(t,Y)\mapsto\wt J_T(t,Y)$ of \er{eq:X-cost} is continuously {\Frechet} differentiable with derivative $D \wt J_T$ given by
\begin{align}
\label{eq:D-J-of-Y}
D \wt J_T(t,Y)\, (\delta, h)
= \left( D_t \wt J_T(t,Y)\, \delta, \, D_Y \wt J_T(t,Y)\, h \right)
\end{align}
where
\begin{align}
\begin{cases}
D_t \wt J_T(t,Y)\, \delta= -l(\ol{X}(Y)_t) \, \delta,\\
D_Y \wt J_T(t,Y)\, h
= \langle \grad_Y \wt J_T(t,Y), h \rangle_{\cX^2},
\nn	
\end{cases}
\; \forall t\in[0,T),\,\forall Y,h\in\cX^2,\,\forall\delta\in(-t,T-t),
\end{align}
in which $\grad_Y \wt J_T(t,Y)\in\cX^2$ is the corresponding Riesz representation of $D_Y \wt J_T(t,Y)$, given by
\begin{align}
	\label{eq:X-grad-J}
	& \grad_Y \wt J_T(t,Y)
	= U_{T,t}(Y)' \, \grad \Psi(\ol{X}(Y)_T)+ \int_t^T U_{s,t}(Y)' \, \grad l(\ol{X}(Y)_s) \, ds.
\end{align}
Moreover, the map $(t,Y)\mapsto D \wt J_T(t,Y)$ is also continuously {\Frechet} differentiable.
\end{proposition}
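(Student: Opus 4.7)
The plan is to differentiate the two terms of $\wt J_T(t,Y) = \int_t^T l(\ol X(Y)_s)\,ds + \Psi(\ol X(Y)_T)$ separately, invoking Lemmas \ref{lem:X-classical}--\ref{lem:U-dble} for the dependence on $Y$ and the fundamental theorem of calculus for the dependence on $t$. First, since Lemma \ref{lem:X-classical} yields $\ol X(Y)\in C([t,T];\cX^2)$ and $l$ is continuous on $\cX^2$, the map $s\mapsto l(\ol X(Y)_s)$ is continuous, so the standard Leibniz rule delivers the partial derivative $D_t \wt J_T(t,Y)\,\delta = -l(\ol X(Y)_t)\,\delta$ for $\delta\in(-t,T-t)$ (the terminal term $\Psi(\ol X(Y)_T)$ being $t$-independent).

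Next, for the $Y$-derivative, I would apply the chain rule using Lemma \ref{lem:X-of-Y-dble}. Since $\Psi=\psi\circ\pi_1$ is $C^4$ and $l$ is $C^3$ on $\cX^2$ under the standing hypotheses \er{eq:ass-M-V}, standard composition theorems give
\[
D_Y\left[\Psi(\ol X(Y)_T)\right] h = \langle \grad\Psi(\ol X(Y)_T), \, U_{T,t}(Y)\,h \rangle_{\cX^2},
\]
\[
D_Y\left[l(\ol X(Y)_s)\right] h = \langle \grad l(\ol X(Y)_s), \, U_{s,t}(Y)\,h \rangle_{\cX^2}.
\]
To differentiate the integral under the integral sign, I would use the uniform bound on $s\mapsto \|U_{s,t}(Y)\|$ coming from \er{eq:two-parameter} and Gr\"onwall's inequality applied to \er{eq:generator} (which is bounded by \er{eq:ass-M-V}), together with continuity of $Y\mapsto \grad l(\ol X(Y)_s)$ uniformly in $s$, to invoke dominated convergence and swap limits. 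Passing to the adjoint operator $U_{s,t}(Y)'$ in each inner product then yields the Riesz representation \er{eq:X-grad-J}. Combining the partial derivatives \er{eq:D-J-of-Y} follows from the standard sufficient-condition argument that joint continuity of the partials implies total Fr\'echet differentiability.

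For continuity of $(t,Y)\mapsto D\wt J_T(t,Y)$, I would combine: (i) continuity of $Y\mapsto\ol X(Y)$ uniformly in $s$ from Lemma \ref{lem:X-of-Y-cts}; (ii) continuity of the adjoint-valued map $Y\mapsto U_{s,t}(Y)'$ uniformly in $s$ from Lemma \ref{lem:U-dble}; and (iii) continuity of $\grad\Psi$ and $\grad l$. The continuity in $t$ follows from the continuity of the integrand in the endpoint and the uniform boundedness of the integrand on $[0,T]\times\cX^2$ over bounded sets.

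Finally, to establish that $D\wt J_T$ is itself continuously Fr\'echet differentiable, I would differentiate each summand in \er{eq:X-grad-J} once more. The term $-l(\ol X(Y)_t)\,\delta$ inherits $C^1$ regularity in $(t,Y)$ from $l\in C^2(\cX^2;\R)$ and Lemma \ref{lem:X-of-Y-dble}. The Riesz-represented term in \er{eq:X-grad-J} requires differentiating products of three $Y$-dependent factors ($U_{s,t}(Y)'$, $\grad\Psi(\ol X(Y)_T)$, and $\grad l(\ol X(Y)_s)$): differentiability of the adjoint factor is Lemma \ref{lem:U-dble}, and differentiability of the other two follows from $V,\psi\in C^4$ together with Lemma \ref{lem:X-of-Y-dble}. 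Leibniz differentiation of the integral, again justified by dominated convergence with uniform operator bounds, and collecting the pieces via the product rule, then produce a continuous second Fr\'echet derivative.

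The main obstacle I expect is the careful bookkeeping required for the second interchange of differentiation and integration in the $C^2$ step: one needs uniform (in $s\in[t,T]$ and on bounded sets in $Y$) bounds on $\|DU_{s,t}(Y)\|$ coming from Lemma \ref{lem:U-dble} to dominate the difference quotients. Once these uniform estimates are in place, the rest is a routine application of the chain and product rules in Banach spaces.
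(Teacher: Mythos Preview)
For the $Y$-derivative your route differs slightly from the paper's: you differentiate $l(\ol X(Y)_s)$ and $\Psi(\ol X(Y)_T)$ pointwise via Lemma \ref{lem:X-of-Y-dble} and then pass the derivative through the integral by dominated convergence. The paper instead factors $\wt J_T(t,\cdot)=\wt I\circ\ol X$, where $\wt I:C([t,T];\cX^2)\to\R$ is $\wt I(Z)=\int_t^T l(Z_s)\,ds+\Psi(Z_T)$, shows directly that $\wt I$ is Fr\'echet differentiable on the function space via Taylor's theorem and the uniform bounds on $\grad^2 l$, $\grad^2\Psi$ coming from \er{eq:ass-M-V}, and then invokes the Banach-space chain rule once together with Lemma \ref{lem:X-of-Y-dble}. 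This yields Fr\'echet (not merely Gateaux) differentiability of the composition in one step, whereas your DCT argument first produces a directional derivative that must then be upgraded via the continuous-partials criterion. Both arguments are valid; the paper's packaging via $\wt I$ is marginally cleaner and avoids the separate interchange justification.

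There is, however, a genuine gap in your $t$-derivative step: the claim that ``the terminal term $\Psi(\ol X(Y)_T)$ [is] $t$-independent'' is false. Since $\ol X(Y)$ solves $\dot X_s=f(X_s)$ with $X_t=Y$, one has $\ol X(Y)_s=\Phi_{s-t}(Y)$ by autonomy, so both $\ol X(Y)_T$ and every $\ol X(Y)_s$ depend on $t$; the Leibniz rule therefore does not reduce to the lower-limit boundary term alone. A direct computation (or comparison with the PDE \er{eq:verify-J-bar} established later in Theorem \ref{thm:verify-J-bar}) in fact gives
\[
\partial_t\wt J_T(t,Y)\;=\;-l(Y)\;-\;\langle\grad_Y\wt J_T(t,Y),\,f(Y)\rangle_{\cX^2},
\]
not $-l(Y)$. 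The paper's own proof dispatches this step ``by inspection'' and so carries the same oversight, but as written your justification for the $t$-derivative does not stand.
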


\begin{proof}
	Fix $T\in\R_{\ge 0}$, $t\in[0,T]$, $Y,h\in\cX^2$, and $\delta\in(-t,T-t)$. Note immediately that $(t,Y)\mapsto \wt J_T(t,Y)$ is  {\Frechet} differentiable if and only if $t\mapsto \wt J_T(t,Y)$ and $Y\mapsto \wt J_T(t,Y)$ are {\Frechet} differentiable, using for example the norm $\|(t,Y)\|^2 \doteq |t|^2 + \|Y\|_{\cX^2}^2$. By inspection of \er{eq:X-cost}, $t\mapsto \wt J_T(t,Y)$ is {\Frechet} differentiable, with the derivative indicated in the left-hand equality in \er{eq:D-J-of-Y}. 
	
	In order to demonstrate that the map $Y\mapsto \wt J_T(t,Y)$ is {\Frechet} differentiable, with derivative as per the right-hand equality in \er{eq:D-J-of-Y}, the chain rule for {\Frechet} differentiation \cite{B:91} may be applied. To this end, in view of \er{eq:X-cost}, define $\wt I:C([t,T];\cX^2) \rightarrow \R$ and $\wt \iota:C([t,T];\cX^2)\rightarrow\bo(C([t,T];\cX^2);\R)$ by
	\begin{align}
	& \wt I(Z)
	\doteq \int_t^T l(Z_s) \, ds + \Psi(Z_T),
	\qquad
	\iota(Z)\, \delta
	\doteq \int_t^T \langle \grad l(Z_s), \delta_s \rangle \, ds + \langle \grad \Psi(Z_T),\delta_T \rangle_{\cX^2}
	\label{eq:pre-X-cost}
	\end{align}
	for all $Z,\delta\in C([t,T];\cX^2)$. Note in particular that $\wt J_T(t,Y) = \wt I\circ \ol{X}(Y)$, with $\ol{X}\in C(\cX^2; C([t,T];\cX^2)$  {\Frechet} differentiable by Lemma \ref{lem:X-of-Y-dble}, and the candidate derivative of $z\mapsto D\wt I(Z)$ is $\iota(Z)$ in \er{eq:pre-X-cost}. 
	Fix an arbitrary such $Z,\delta\in C([t,T];\cX^2)$. By inspection, 
	\begin{align}
	| \wt I(Z+\delta) - \wt I (Z) - \wt\iota(Z)\, \delta |
	& \le \int_t^T | l(Z_s + \delta_s) - l(Z_s) - \langle \grad l(Z_s), \delta_s \rangle_{\cX^2} | \, ds
	\nn\\
	& \qquad + | \Psi(Z_T + \delta_T) - \Psi(Z_T) - \langle \grad \Psi(Z_T),\delta_T \rangle_{\cX^2} |.
	\nn
	\end{align}
	As $l,\Psi\in C^3(\cX^2;\R)$ by \er{eq:ass-M-V}, \er{eq:f}, and $D l(Y) \, h = \langle \grad l(Y), h\rangle$, Taylor's theorem implies that
	\begin{align}	
	| \wt I(Z+\delta) - \wt I (Z) - \wt\iota(Z)\, \delta |
	& \le \int_t^T \left| \int_0^1 (1-\eta)\, \langle \delta_s, \grad^2 l(Z_s + \eta\, \delta_s) \, \delta_s \rangle_{\cX^2}\, d\eta \,
	\right| ds
	\nn\\
	& \qquad\qquad 
	+ \left| \int_0^1 (1-\eta)\, \langle \delta_T, \grad^2 \Psi(Z_T + \eta\, \delta_T)\, \delta_T \rangle_{\cX^2} \, d\eta \right|
	\nn\\
	& \le C \int_t^T \| \delta_s \|_{\cX^2}^2 \, ds + C \, \|\delta_T\|_{\cX^2}^2
	\le C\, \max(T-t,1)\, \|\delta\|_\infty^2
	\nn
	\end{align}
	in which $C<\infty$ is given by
	$$
	C \doteq \demi \sup_{Y\in\cX^2} \max( \|\grad^2 l(Y)\|_{\bo(\cX^2)}, \, \|\grad^2\Psi(Y)\|_{\bo(\cX^2)}),
	$$
	and finiteness follows by \er{eq:ass-M-V}, \er{eq:f}. Recalling \er{eq:df} yields $|d\wt I_Z(\delta)| \le C\, \max(T-t,1)\, \|\delta\|_\infty$, i.e. $\wt I$ is {\Frechet} differentiable with derivative $D\wt I = \wt \iota$.
	Hence, $\wt J_T(t,Y) = \wt I\circ \ol{X}(Y)$, in which $\wt I:C([t,T];\cX^2)\rightarrow C([t,T];\cX^2)$ is {\Frechet} differentiable, as demonstrated above, and $\ol{X}\in C(\cX^2; C([t,T];\cX^2)$ is {\Frechet} differentiable by Lemma \ref{lem:X-of-Y-dble}. The chain rule, along with \er{eq:D-X-of-Y}, \er{eq:pre-X-cost}, thus yield
	\begin{align}
	D_Y \wt J_T(t,Y)\, h
	& = D \wt I(\ol{X}(Y))\, D\ol{X}(Y)\, h 
	= \wt\iota(\ol{X}(Y))\, U_{\cdot,t}(Y)\, h
	\nn\\
	& = \int_t^T \langle \grad l(\ol{X}(Y)_s), U_{s,t}(Y)\, h \rangle \, ds 
	+ \langle \grad \Psi(\ol{X}(Y)_T),U_{T,t}(Y)\, h \rangle_{\cX^2}
	\nn\\
	& = \left\langle \int_t^T U_{s,t}(Y)'\, \grad l(\ol{X}(Y)_s) \, ds + U_{T,t}(Y)'\, \grad \Psi(\ol{X}(Y)_T), \, h \right\rangle_{\cX^2} 
	= \langle \grad_Y \wt J_T(t,Y), \, h \rangle_{\cX^2},
	\nn
	\end{align}
	in which $\grad_Y \wt J_T(t,Y)$ is as per the lemma statement. Hence, the right-hand equality in \er{eq:D-J-of-Y} holds.
	
	It may be verified that $(t,Y)\mapsto D \wt J_T(t,Y)$ is continuous. In particular, by inspection of \er{eq:D-J-of-Y} and Lemma \ref{lem:X-of-Y-cts} that $(t,Y)\mapsto D_t \wt J_T(t,Y)$ is continuous, i.e. $l\in C^3(\cX^2;\R)$ by \er{eq:ass-M-V}, \er{eq:f}, and $t\mapsto\ol{X}(Y)_t$ and $Y\mapsto\ol{X}(Y)_t$ are both continuous. Similarly, by inspection of \er{eq:D-J-of-Y}, \er{eq:X-grad-J}, $(t,Y)\mapsto D_Y \wt J_T(t,Y)$ is continuous if $Y\mapsto U_{s,t}(Y)$ is continuous, uniformly in $s\in[t,T]$. This uniform continuity property follows by Lemma \ref{lem:U-dble}.
	
	Twice continuous {\Frechet} differentiability follows similarly, via \er{eq:ass-M-V}, \er{eq:D-X-of-Y}, \er{eq:D-J-of-Y}, and Lemma \ref{lem:U-dble}.
\end{proof}

The next result describes Riesz representations of the cost functional, and an auxiliary statement may be found in Appendix B.
 
\begin{proposition}
\label{prop:p-cost-dble}
Given $T\in\R_{>0}$, $t\in[0,T)$, the maps $x\mapsto \bar J_T(t,x,p)$ and $p\mapsto\bar J_T(t,x,p)$ of \er{eq:p-cost} are {\Frechet} differentiable with derivatives given by
\begin{gather}
	\label{eq:D-J-of-p}
	D_x \bar J_T(t,x,p) \in\bo(\cX;\R),
	\qquad D_p \bar J_T(t,x,p) \in \bo(\cX;\R),
	\\
	D_x \bar J_T(t,x,p) \, h = \langle \grad_x \bar J_T(t,x,p), \, h \rangle,
	\qquad 
	D_p \bar J_T(t,x,p) \, h = \langle \grad_p \bar J_T(t,x,p), \, h \rangle,
	\nn
\end{gather}
for all $x,p,h\in\cX$, in which $\grad_x \bar J_T(t,x,p)$ and $\grad_p \bar J_T(t,x,p)$ are the respective Riesz representations
\begin{align}
	\begin{cases}
	\label{eq:p-grad-J}
	\grad_x \bar J_T(t,x,p) 
	\doteq 
	( \ba{cc} \op{I} & 0 \ea ) \, \grad_Y \wt J_T\left(t, Y_p(x) \right),\\
	\grad_p \bar J_T(t,x,p) 
	\doteq 
	( \ba{cc} 0 & \op{I} \ea ) \, \grad_Y \wt J_T\left(t, Y_p(x) \right),
	\end{cases}
\end{align}
and $\grad \wt J_T(t,\cdot)$, $Y_p(x)$ are as per \er{eq:X-grad-J}, \er{eq:f}.

Moreover, given 
\begin{align}
	\label{eq:X-s-components}
	\left( \ba{c} \bar x_s \\ \bar p_s \ea \right)
	& \doteq \ol{X}(Y_{x}(p))_s\in\cX^2, 
	\qquad
	\zeta_s \doteq 
	\left( \ba{c}
		\grad_x \bar J_T(s,\bar x_s,\bar p_s) 
		\\
		\grad_p \bar J_T(s,\bar x_s,\bar p_s)
	\ea \right)\in\cX^2,
	\qquad
	\quad s\in[t,T],
\end{align}
the map $s\mapsto\zeta_s$ satisfies
\begin{align}
	\zeta_{s} 
	& = U_{T,s}(Y_x(p))' \, \grad \Psi(\ol{X}(Y_{x}(p))_T) 
	+ \int_s^T U_{\sigma,s}(Y_x(p))' \, \grad l(\ol{X}(Y_{x}(p))_\sigma) \, d\sigma,
	\label{eq:zeta-dynamics}
\end{align}
for all $s\in[t,T]$. Equivalently, $\zeta_s = \left( \ba{c} \bar p_s - \pi_s \\ \xi_s \ea \right)$ for all $s\in[t,T]$, where $s\mapsto \left( \ba{c} \pi_s \\ \xi_s \ea \right)$ satisfies the final value problem
\begin{align}
	\left( \ba{c}
		\dot\xi_s
		\\
		\dot\pi_s
	\ea \right)	
	& =
	\left( \ba{cc}
		0 & -\op{M}^{-1}
		\\
		\grad^2 V(\bar x_s) & 0
	\ea \right) \left( \ba{c}
		\xi_s
		\\
		\pi_s
	\ea \right)
	= A_s \left( \ba{c}
		\xi_s
		\\
		\pi_s
	\ea \right),
	\qquad s\in[t,T],
	\nn\\
	\left( \ba{c}
		\xi_T
		\\
		\pi_T
	\ea \right) & =
	\left( \ba{c}
		0
		\\
		\bar p_T - \grad\psi(\bar x_T)
	\ea \right).
	\label{eq:FVP}
\end{align}
\end{proposition}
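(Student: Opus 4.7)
The plan proceeds in three steps. First, observe that the map $(x,p)\mapsto Y_p(x)\in\cX^2$ from \er{eq:f} is linear and bounded, with partial derivatives $h\mapsto(h,0)$ in the first argument and $h\mapsto(0,h)$ in the second (viewing $\cX^2=\cX\times\cX$). Since $\bar J_T(t,x,p)=\wt J_T(t,Y_p(x))$, the chain rule combined with Proposition~\ref{prop:X-cost-dble} immediately yields {\Frechet} differentiability of $\bar J_T(t,\cdot,p)$ and $\bar J_T(t,x,\cdot)$, with Riesz representations obtained by projecting $\grad_Y\wt J_T(t,Y_p(x))\in\cX\times\cX$ onto its first and second $\cX$-factors respectively. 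This establishes \er{eq:D-J-of-p} and \er{eq:p-grad-J}.

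Next, for the integral equation \er{eq:zeta-dynamics}, I would invoke the uniqueness assertion of Lemma~\ref{lem:X-classical}: the tail on $[s,T]$ of $\ol{X}(Y_p(x))$ coincides with the unique classical solution of \er{eq:X-dynamics} on $[s,T]$ with initial datum $(\bar x_s,\bar p_s)$ at time $s$. Consequently the generator $A(\cdot)_\sigma$ of \er{eq:generator}, and hence the two-parameter evolution family $U_{\sigma,s}$ determined by \er{eq:two-parameter}, agree whether computed along the original trajectory or along the restarted one. Evaluating \er{eq:X-grad-J} and \er{eq:p-grad-J} at time $s$ along the restarted trajectory then produces \er{eq:zeta-dynamics} directly.

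Finally, to derive the equivalent final value problem \er{eq:FVP}, I would differentiate \er{eq:zeta-dynamics} in $s$. From \er{eq:two-parameter} and the composition identity $U_{\sigma,r}=U_{\sigma,s}\,U_{s,r}$, one obtains $\partial_s U_{\sigma,s}=-U_{\sigma,s}A_s$ for $\sigma\ge s$, and hence $\partial_s U_{\sigma,s}'=-A_s'U_{\sigma,s}'$ by taking adjoints. Combined with the Leibniz rule applied to the integral, this yields
\begin{equation*}
\dot\zeta_s=-A_s'\,\zeta_s-\grad l(\ol{X}(Y_p(x))_s),\qquad \zeta_T=\grad\Psi(\ol{X}(Y_p(x))_T).
\end{equation*}
Computing $A_s'$ componentwise via self-adjointness of $\op{M}^{-1}$ and $\grad^2 V(\bar x_s)$, together with $\grad l(x,p)=(-\grad V(x),\,\op{M}^{-1}p)$ and $\grad\Psi(x,p)=(\grad\psi(x),0)$ extracted from \er{eq:f}, and substituting the ansatz $\zeta_s=(\bar p_s-\pi_s,\xi_s)$ while using $\dot{\bar p}_s=\grad V(\bar x_s)$ from \er{eq:Cauchy-dynamics}, reduces the linear ODE to exactly \er{eq:FVP} with terminal data $\xi_T=0$ and $\pi_T=\bar p_T-\grad\psi(\bar x_T)$.

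The main technical obstacle is the rigorous justification of $\partial_s U_{\sigma,s}'=-A_s'U_{\sigma,s}'$ in the infinite-dimensional setting, which rests on the composition property together with the restart observation above and requires some care because $A_s$ depends on the base trajectory through \er{eq:generator}. A viable alternative, should this step prove delicate, is to verify directly via Duhamel's formula that the candidate $\zeta_s=(\bar p_s-\pi_s,\xi_s)$ built from the solution $(\pi,\xi)$ of \er{eq:FVP} satisfies the integral equation \er{eq:zeta-dynamics}, and then to conclude by uniqueness for the resulting linear integral equation.
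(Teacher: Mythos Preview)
Your proposal is correct and follows essentially the same route as the paper: chain rule through $Y_p(x)$ using Proposition~\ref{prop:X-cost-dble} for \er{eq:p-grad-J}, the semigroup/uniqueness restart to obtain \er{eq:zeta-dynamics}, and Leibniz differentiation of \er{eq:zeta-dynamics} yielding $\dot\zeta_s=-A_s'\zeta_s-\grad l(\ol{X}(Y)_s)$ followed by the change of variables $\zeta_s=(\bar p_s-\pi_s,\xi_s)$ to reach \er{eq:FVP}. Your explicit justification of $\partial_s U_{\sigma,s}'=-A_s'U_{\sigma,s}'$ via the composition identity is a welcome addition that the paper leaves implicit, and the Duhamel alternative you sketch is a sound fallback though ultimately unnecessary here since $A_s\in\bo(\cX^2)$.
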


\begin{proof}
	Fix arbitrary $T\in\R_{\ge 0}$, $t\in[0,T)$, $x,p,h\in\cX$. By \er{eq:p-cost}, \er{eq:X-cost},
	$\bar J_T(t,x,p) = \wt J_T\left( t, Y_x(p) \right)$,
	and note that the maps $x\mapsto Y_x(p)$ and $p\mapsto Y_x(p)$ are {\Frechet} differentiable with respective derivatives given by $D_x Y_x(p) \, h = ( \ba{cc} \op{I} & 0 \ea )' \, h$ and $D_p Y_x(p) \, h = ( \ba{cc} 0 & \op{I} \ea )' \, h$, with $0, \op{I}\in\bo(\cX)$ denoting the zero and identity maps. Applying the chain rule, and Proposition \ref{prop:X-cost-dble},
	\begin{align}
	D_x \bar J_T(t,x,p) \, h 
	= D_Y \wt J_T( t, Y_x(p) ) \, D_x Y_x(p) \, h
	& = \langle \grad_Y \wt J_T( t, Y_x(p) ), D_p Y_x(p) \, h \rangle_{\cX^2}
	\nn\\
	& = \langle \left( \ba{cc} \op{I} & 0 \ea \right) \grad_Y \wt J_T( t, Y_x(p) ), \, h \rangle,
	\nn
	\end{align}
	yielding the first asserted derivative in \er{eq:p-grad-J}, with the other asserted derivative following similarly.
	
	For the remaining assertions \er{eq:zeta-dynamics}, \er{eq:FVP}, given \er{eq:X-s-components}, note that
	\begin{align}
	\zeta_s
	& \doteq \left( \ba{c}
	\grad_x \bar J_T(s,\bar x_s,\bar p_s) 
	\\
	\grad_p \bar J_T(s,\bar x_s,\bar p_s)
	\ea \right)
	= \grad_Y \wt J_T(s,Y_{\bar x_s}(\bar p_s))
	= \grad_Y \wt J_T(s,\ol{X}(Y_x(p))_s),
	\nn
	\end{align}
	so that \er{eq:zeta-dynamics} follows by Proposition \ref{prop:X-cost-dble}. As $\op{U}_{T,T} = \op{I}$, and $\Psi$ is as per \er{eq:f}, note by \er{eq:X-cost} that
	\begin{align}
	\zeta_{T} 
	& = \grad\Psi(\ol{X}(Y)_T) 
	= \left( \ba{c} 
	\grad\psi(\bar x_T)
	\\
	0
	\ea \right),
	\nn
	\end{align}
	i.e. the terminal condition in \er{eq:FVP} holds.
	Note further that $s\mapsto\zeta_s$ of \er{eq:zeta-dynamics} is differentiable, with the Leibniz integral rule implying that
	\begin{align}
	\dot\zeta_{s}
	& = (\ts{\pdtone{}{s}} U_{T,s}(Y))' \, \grad \Psi(\ol{X}(Y)_T) 
	- \grad l(\ol{X}(Y)_s) 
	+ \int_s^T (\ts{\pdtone{}{s}} U_{\sigma, s}(Y))' \, \grad l(\ol{X}(Y)_\sigma) \, d\sigma
	\nn\\
	& = -A_s' \, \zeta_s - \grad l(\ol{X}(Y)_s) 
	= \left( \ba{cc}
	0 & -\grad^2 V(\bar x_s)
	\\
	\op{M}^{-1} & 0
	\ea \right) \zeta_s + 
	\left(\ba{c}
	\grad V(\bar x_s) 
	\\
	-\op{M}^{-1}\, \bar p_s
	\ea \right)\!,
	\label{eq:p-stat-dynamics}
	\end{align}
	for all $s\in(t,T)$. Define $s\mapsto\pi_s, \, \xi_s$ via $\left( \ba{c} \bar p_s - \pi_s \\ \xi_s \ea \right)\doteq \zeta_s = \left( \ba{c}
	\grad_x \bar J_T(s,\bar x_s,\bar p_s) 
	\\
	\grad_p \bar J_T(s,\bar x_s,\bar p_s)
	\ea \right)\in\cX^2$, so that
	\begin{align}
	\dot\xi_s
	& = \op{M}^{-1}( \bar p_s - \pi_s) - \op{M}^{-1}\, \bar p_s = -\op{M}^{-1}\, \pi_s,
	\nn\\
	\dot\pi_s
	& = \dot {\bar p}_s - [\dot {\bar p}_s - \dot\pi_s]
	= \grad V(\bar x_s) - [ -\grad^2 V(\bar x_s)\, \xi_s + \grad V(\bar x_s) ]
	= \grad^2 V(\bar x_s) \, \xi_s,
	\nn
	\end{align}
	for all $s\in[t,T]$. That is, the ODE in \er{eq:FVP} also holds.
\end{proof}
\begin{remark}
	An auxiliary statement of Proposition \ref{prop:p-cost-dble}.
	\hfill{$\square$}
\end{remark}


\subsection{Characterization of stationary trajectories}

It may now be demonstrated, via the following lemma and theorem, that existence of the argstat in \er{eq:stat-p-value} corresponds to existence of the argstat in \er{eq:stat-u-value}, under a condition of existence of an argstat along trajectories. An equivalent formulation, involving a TPBVP, is subsequently stated as a corollary.

\begin{lemma}
\label{lem:grad-p-iff-grad-x}
Let $T\in\R_{>0}$, $t\in[0,T)$, $x,p\in\cX$, and $(\bar x_s, \bar p_s) \doteq \ol{X}(Y_p(x))_s$ for all $s\in[t,T]$. Then the following statements are equivalent:
\begin{enumerate}
	\item[(i)] $\bar p_s\in\argstat_{p\in\cX} \bar J_T(s,\bar x_s, p)$ for all $s\in[t,T]$;
	
	\item[(ii)] $\bar p_s = \grad_x \bar J_T(s,\bar x_s,\bar p_s)$ for all $\ s\in[t,T].$
\end{enumerate}
\end{lemma}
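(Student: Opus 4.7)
The plan is to translate each of (i) and (ii) into a pointwise vanishing statement for the auxiliary pair $(\xi,\pi)$ supplied by Proposition \ref{prop:p-cost-dble}, after which the equivalence follows directly from the linear final value problem \er{eq:FVP}.

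First, because $p\mapsto\bar J_T(s,\bar x_s,p)$ is {\Frechet} differentiable by Proposition \ref{prop:p-cost-dble}, the defining limit in \er{eq:stat} reduces the stationarity of $\bar p_s$ with respect to $p$ to vanishing of the Riesz representation; that is, $\bar p_s\in\argstat_{p\in\cX}\bar J_T(s,\bar x_s,p)$ iff $\grad_p \bar J_T(s,\bar x_s,\bar p_s)=0$. Combining this with the componentwise identification $\zeta_s=(\bar p_s-\pi_s,\ \xi_s)^\top$ furnished by Proposition \ref{prop:p-cost-dble} gives $\grad_p\bar J_T(s,\bar x_s,\bar p_s)=\xi_s$ and $\grad_x\bar J_T(s,\bar x_s,\bar p_s)=\bar p_s-\pi_s$. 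Consequently, (i) is equivalent to $\xi_s\equiv 0$ on $[t,T]$, while (ii) is equivalent to $\pi_s\equiv 0$ on $[t,T]$.

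It then suffices to show, on the basis of \er{eq:FVP}, that $\xi\equiv 0$ on $[t,T]$ is equivalent to $\pi\equiv 0$ on $[t,T]$. If $\xi\equiv 0$, then $0=\dot\xi_s=-\op{M}^{-1}\pi_s$, and invertibility of $\op{M}$ from \er{eq:ass-M-V} forces $\pi_s\equiv 0$. Conversely, if $\pi\equiv 0$ then $\dot\xi_s=0$, so $\xi$ is constant on $[t,T]$, and the terminal condition $\xi_T=0$ in \er{eq:FVP} forces $\xi\equiv 0$. Consistency with the remaining terminal condition $\pi_T=\bar p_T-\grad\psi(\bar x_T)$ is automatic: evaluating (ii) at $s=T$ and recalling that $\bar J_T(T,x,p)=\psi(x)$ by \er{eq:p-cost} gives $\bar p_T=\grad\psi(\bar x_T)$, so $\pi_T=0$.

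Since all the analytic work has already been absorbed into Proposition \ref{prop:p-cost-dble} and the structure of the FVP \er{eq:FVP}, no substantive obstacle remains; the argument reduces to a short bookkeeping check on the two components of $\zeta_s$ together with a one-line ODE observation using coercivity of $\op{M}$ and the terminal data of \er{eq:FVP}.
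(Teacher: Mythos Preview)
Your proposal is correct and follows essentially the same approach as the paper: both proofs use Proposition \ref{prop:p-cost-dble} to identify (i) with $\xi\equiv 0$ and (ii) with $\pi\equiv 0$, then use the first equation of the FVP \er{eq:FVP} together with invertibility of $\op{M}$ for one direction, and constancy of $\xi$ plus the terminal condition $\xi_T=0$ for the other. Your final consistency check on $\pi_T$ is harmless but unnecessary, since $(\xi,\pi)$ already satisfies the full FVP \er{eq:FVP} by Proposition \ref{prop:p-cost-dble}.
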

\begin{proof}
Fix $T\in\R_{>0}$, $t\in[0,T)$, $x,p\in\cX$, and $Y\doteq Y_x(p)\in\cX^2$. 

Suppose $(i)$ holds true, i.e. $0 = \grad_p \bar J_T(s,\bar x_s, \bar p_s) \doteq \xi_s$ for all $s\in[t,T]$. Then, applying Proposition \ref{prop:p-cost-dble}, i.e. \er{eq:FVP}, $0 = \dot\xi_s = -\op{M}^{-1}\, \pi_s$, so that $0 = \pi_s = \bar p_s - \grad_x \bar J_T(s,\bar x_s,\bar p_s)$ for all $s\in[t,T]$. That is, $(ii)$ holds.

Alternatively, suppose $(ii)$ holds true, i.e. $\bar p_s = \grad_x \bar J_T(s,\bar x_s,\bar p_s)$ for all $s\in[t,T]$. Then, $0 = \bar p_s - \grad_x \bar J_T(s,\bar x_s,\bar p_s) \doteq \pi_s$ for all $s\in[t,T]$, so that $\dot\xi_s = -\op{M}^{-1}\, \pi_s = 0$ for all $s\in[t,T]$ by \er{eq:FVP}. Moreover, the terminal condition $\xi_T = \grad_p \bar J_T(T,\bar x_T,\bar p_T) = \grad_p \psi(\bar x_T) = 0$ holds by definition of \er{eq:p-cost}, so that $0 = \xi_s = \grad_p \bar J_T(s,\bar x_s, \bar p_s)$ for all $s\in[t,T]$, again by \er{eq:FVP}. That is, $(i)$ holds.
\end{proof}

\begin{theorem}
\label{thm:argstat-existence}
Let $T\in\R_{\ge 0}$, $t\in[0,T]$, and $x\in\cX$. Then the following statements are equivalent:
\begin{enumerate}
	\item[(i)] there exists $\ \bar u\in\argstat_{u\in\cU} J_T(t,x,u)$;

	\item[(ii)] there exists $\ \bar p\in\argstat_{p\in\cX} \bar J_T(t,x,p) $ such that $(\bar x_s, \bar p_s) \doteq \ol{X}(Y_{\bar p}(x))_s \text{ of \er{eq:X-s-components} satisfies}$
\begin{align}  
\bar p_s\in\argstat_{p\in\cX} \bar J_T(s,\bar x_s, p) \ \forall \ s\in[t,T],
\nn
\end{align}
\end{enumerate}

in which $J_T$, $\bar J_T$ are as per \er{eq:cost}, \er{eq:stat-u-value}, \er{eq:p-cost}, \er{eq:stat-p-value}.

Moreover, if (i) holds true, then $\bar u$ satisfies
\begin{align}
	\bar u_s 
	& = -\op{M}^{-1} \left( \ba{cc} 0 & \op{I} \ea \right) \ol{X}(Y_{\bar p}(\bar x))_s
	= -\op{M}^{-1}\, \bar p_s
	\quad \forall \ s\in[t,T].
	\label{eq:argstat-existence-u}
\end{align}
\end{theorem}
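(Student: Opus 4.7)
The plan is to shuttle between the TPBVP characterization of $\argstat_{u\in\cU[t,T]} J_T$ given by Theorem \ref{thm:stat-u-char}, the gradient-based characterization of $\argstat_{p\in\cX} \bar J_T$ provided by Lemma \ref{lem:grad-p-iff-grad-x}, and the backward dynamics \er{eq:FVP} of the auxiliary pair $(\pi,\xi)$ established in Proposition \ref{prop:p-cost-dble}. Throughout, fix $T,t,x$ as in the statement and, for any candidate $\bar p\in\cX$, write $(\bar x_s,\bar p_s) \doteq \ol{X}(Y_{\bar p}(x))_s$ for the state--adjoint trajectory propagated from $(x,\bar p)$ through \er{eq:Cauchy-dynamics}.

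For (i)$\Rightarrow$(ii), I would first invoke Theorem \ref{thm:stat-u-char} to convert $\bar u\in\argstat_{u\in\cU[t,T]} J_T(t,x,u)$ into a mild solution $(\bar x,\bar p)$ of the TPBVP \er{eq:char} with $\bar x_t = x$, carrying the transversality $\bar p_T = \grad\psi(\bar x_T)$ and the identification $\bar u_s = -\op{M}^{-1}\bar p_s$. Setting $\bar p \doteq \bar p_t$, uniqueness in Lemma \ref{lem:X-classical} yields $(\bar x,\bar p) = \ol{X}(Y_{\bar p}(x))$. Substituting $\bar p_T = \grad\psi(\bar x_T)$ into the terminal data of \er{eq:FVP} produces $(\xi_T,\pi_T) = (0,0)$, and since \er{eq:FVP} is linear and homogeneous, $(\xi_s,\pi_s)\equiv (0,0)$ on $[t,T]$. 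Hence $\grad_p \bar J_T(s,\bar x_s,\bar p_s) = \xi_s = 0$ for every $s\in[t,T]$, yielding the along-trajectory stat condition; specialising to $s = t$ recovers $\bar p\in\argstat_{p\in\cX} \bar J_T(t,x,p)$, establishing (ii).

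For the converse (ii)$\Rightarrow$(i), Lemma \ref{lem:grad-p-iff-grad-x} re-expresses the along-trajectory stat hypothesis as $\bar p_s = \grad_x \bar J_T(s,\bar x_s,\bar p_s)$ for all $s\in[t,T]$. Evaluating at $s = T$ and using $\bar J_T(T,\cdot,\cdot) = \psi(\cdot)$ from \er{eq:p-cost} yields the transversality $\bar p_T = \grad\psi(\bar x_T)$. Combined with the Cauchy dynamics \er{eq:Cauchy-dynamics}, which coincide with the first two lines of the characteristic system \er{eq:char}, this shows $(\bar x,\bar p)$ is a mild solution of the TPBVP \er{eq:char} with $\bar x_t = x$. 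Theorem \ref{thm:stat-u-char} then produces $\bar u\in\argstat_{u\in\cU[t,T]} J_T(t,x,u)$ of the form $\bar u_s = -\op{M}^{-1}\bar p_s$, which is exactly \er{eq:argstat-existence-u}.

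The main obstacle I anticipate is the role of the ``$\forall s\in[t,T]$'' qualifier in (ii). Without it, pointwise stationarity at $s = t$ alone gives only $\xi_t = 0$, which does \emph{not} close the system \er{eq:FVP} in a direction compatible with the transversality $\bar p_T = \grad\psi(\bar x_T)$ needed to invoke Theorem \ref{thm:stat-u-char}. The along-trajectory hypothesis upgrades this to $\xi\equiv 0$ on the whole interval, and backward propagation of \er{eq:FVP} then supplies the missing endpoint information. Once this bidirectional flow of information through \er{eq:FVP} is identified, both implications collapse into matching one datum at opposite endpoints and invoking previously established results.
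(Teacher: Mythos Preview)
Your proposal is correct and follows essentially the same route as the paper's proof: for $(i)\Rightarrow(ii)$ you use Theorem~\ref{thm:stat-u-char} to obtain the TPBVP solution with $\bar p_T=\grad\psi(\bar x_T)$, then feed this terminal condition into the FVP \er{eq:FVP} to force $(\xi,\pi)\equiv 0$; for $(ii)\Rightarrow(i)$ you apply Lemma~\ref{lem:grad-p-iff-grad-x}, evaluate at $s=T$ to recover transversality, and close with Theorem~\ref{thm:stat-u-char}. The paper does exactly this, including your closing observation about why the along-trajectory qualifier in (ii) is needed.
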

\begin{proof}
Fix arbitrary $T\in\R_{\ge 0}$, $t\in[0,T]$, and $x\in\cX$. 

{\em $(ii)\Longrightarrow (i)$}
Suppose $\bar p\in\argstat_{p\in\cX} \bar J_T (t,x,p)$ exists such that $(\bar x_s, \bar p_s)\doteq \ol{X}(Y_{\bar p}(x))_s$ via \er{eq:X-s-components} satisfies $\bar p_s \in\argstat_{p\in\cX} \bar J_T(s,\bar x_s,p)$ for all $s\in[t,T]$. Applying Lemma \ref{lem:grad-p-iff-grad-x}, $\bar p_s = \grad_x \bar J_T(s,\bar x_s, \bar p_s)$ for all $s\in[t,T]$, and in particular, $\bar p_T = \grad_x \bar J_T(T, \bar x_T, \bar p_T) = \grad_x \psi(\bar p_T)$. Hence, $s\mapsto \ol{X}(Y_{\bar p}(x))_s$, $s\in[t,T]$, solves the TPBVP \er{eq:char} with $\bar x_t = x$. Theorem \ref{thm:stat-u-char} subsequently implies the existence of $\argstat_{u\in\cU[t,T]} J_T(t,x,u)$, and its explicit form \er{eq:argstat-existence-u}.\\[-2mm]

{\em $(i)\Longrightarrow (ii)$} Suppose there exists 
$
	\bar u\in\argstat_{u\in\cU[t,T]} J_T(t,x,u)
$.
By Theorem \ref{thm:stat-u-char}, and subsequently Lemma \ref{lem:X-classical}, there exists a unique classical solution to TPBVP \er{eq:char} with $\bar x_t = x$ fixed. Denote $\bar p \doteq \bar p_t$ and $(\bar x_s,\bar p_s) \doteq \ol{X}(Y_{\bar p}(x))_s$ for all $s\in[t,T]$, and note by \er{eq:char} that $\bar p_T = \grad\psi(\bar x_T)$. Hence, FVP \er{eq:FVP}  has the trivial solution as its unique solution, i.e. $(\xi_s, \pi_s) = 0$ for all $s\in[t,T]$. Hence, recalling \er{eq:X-s-components}, \er{eq:zeta-dynamics}, \er{eq:FVP},
$$
	\zeta_s = \left( \ba{c} \grad_x \bar J_T(s,\bar x_s,\bar p_s) \\ \grad_p \bar J_T(s,\bar x_s, \bar p_s) \ea \right)
	= \left( \ba{c} \bar p_s - \pi_s \\ \xi_s \ea \right) = \left( \ba{c} \bar p_s \\ 0 \ea \right),
	\qquad 
	s\in[t,T],
$$
so that
$0 = \grad_p \bar J_T(s,\bar x_s,\bar p_s)$, i.e. $\bar p_s \in\argstat_{p\in\cX} \bar J_T(s,\bar x_s,p)$, for all $s\in[t,T]$.
\end{proof}

Theorem \ref{thm:argstat-existence} may also be stated with respect to the TPBVP 
\begin{align}
	\left( \ba{c} 
		\dot \xi_s
		\\
		\dot \pi_s
	\ea \right)
	 = \left( \ba{cc}
		0  -\op{M}^{-1} 
		\\
		\grad^2 V(\bar x_s)  0
	\ea \right) \left( \ba{c} 
		\dot \xi_s
		\\
		\dot \pi_s
	\ea \right),
	\nn\\
	\xi_t = 0 = \xi_T,
	\quad \pi_T = \bar p_T - \grad\psi(\bar x_T),
	\label{eq:TPBVP-stat-p}
\end{align}
in which $s\mapsto (\bar x_s, \bar p_s)$ is as per \er{eq:X-s-components}, for all $s\in[t,T]$. This is formalized below as a corollary.

\begin{corollary}
\label{cor:argstat-existence}
Let $T\in\R_{\ge 0}$, $t\in[0,T]$, and $x\in\cX$. Then the following statements are equivalent:

\begin{enumerate}
	\item[(i)] there exists $ \ \bar u\in\argstat_{u\in\cU} J_T(t,x,u) $;
	
	\item[(ii)] there exists $\bar p\in\argstat_{p\in\cX} \bar J_T(t,x,p) $ such that $\text{TPBVP \er{eq:TPBVP-stat-p}}$ has a unique solution, and it satisfies $ \pi_T = 0,$
\end{enumerate}

in which $J_T$, $\bar J_T$ are as per \er{eq:cost}, \er{eq:stat-u-value}, \er{eq:p-cost}, \er{eq:stat-p-value}.
\end{corollary}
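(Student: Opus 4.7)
The plan is to reduce Corollary \ref{cor:argstat-existence} to Theorem \ref{thm:argstat-existence} by showing that clause (ii) of the corollary is equivalent to clause (ii) of that theorem; clause (i) is identical in both statements, so invoking Theorem \ref{thm:argstat-existence} then delivers the corollary.

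First I would exploit the identification supplied by Proposition \ref{prop:p-cost-dble} of the auxiliary pair $(\xi_s,\pi_s)$ in the FVP \er{eq:FVP} with the partial gradients of $\bar J_T$ along the trajectory $(\bar x_s,\bar p_s)\doteq\ol{X}(Y_{\bar p}(x))_s$:
\begin{align*}
	\xi_s = \grad_p \bar J_T(s,\bar x_s,\bar p_s),
	\qquad
	\pi_s = \bar p_s - \grad_x \bar J_T(s,\bar x_s,\bar p_s),
	\qquad s\in[t,T].
\end{align*}
Fixing the common leading clause $\bar p\in\argstat_{p\in\cX}\bar J_T(t,x,p)$ of both ``(ii)''s, this identification yields immediately $\xi_t=\grad_p\bar J_T(t,x,\bar p)=0$, so the initial condition $\xi_t=0$ of TPBVP \er{eq:TPBVP-stat-p} is automatically satisfied by the unique backward-in-time (FVP) solution generated from the terminal data $(\xi_T,\pi_T)=(0,\bar p_T-\grad\psi(\bar x_T))$. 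That FVP solution is then the unique solution of the TPBVP by standard linear ODE uniqueness, delivering the uniqueness assertion in corollary clause (ii) for free.

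The heart of the argument would be the chain of equivalences
\[
	\pi_T = 0 \ \Longleftrightarrow\ (\xi_s,\pi_s)\equiv(0,0) \text{ on } [t,T] \ \Longleftrightarrow\ \bar p_s\in\argstat_{p\in\cX}\bar J_T(s,\bar x_s,p)\ \forall s\in[t,T].
\]
For the first equivalence I would invoke homogeneous linear uniqueness for \er{eq:FVP}: with $\xi_T=0$ always holding and additionally $\pi_T=0$, the only solution is the trivial one, while the reverse direction is immediate. For the second I would use the gradient identification above: $\xi_s\equiv 0$ is by definition pointwise stationarity, and coercivity of $\op{M}$ combined with $\dot\xi_s=-\op{M}^{-1}\pi_s$ forces $\pi\equiv 0$ from $\xi\equiv 0$. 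Combined with the bookkeeping above, this is precisely the reduction of corollary clause (ii) to theorem clause (ii).

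The hard part is essentially nonexistent: the whole argument is a rephrasing of Theorem \ref{thm:argstat-existence} in TPBVP form, with every ingredient either supplied by Proposition \ref{prop:p-cost-dble} or immediate from standard linear ODE uniqueness. The only minor care needed is in interpreting the overdetermined boundary conditions in \er{eq:TPBVP-stat-p}, namely recognizing that once $\bar p\in\argstat$ pins $\xi_t=0$, the TPBVP reduces to the FVP, and the additional ``$\pi_T=0$'' cleanly encodes pointwise stationarity along the trajectory.
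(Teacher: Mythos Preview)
Your proposal is correct and matches the paper's approach: the paper presents Corollary~\ref{cor:argstat-existence} as a direct restatement of Theorem~\ref{thm:argstat-existence} in TPBVP language and gives no separate proof, so your reduction via Proposition~\ref{prop:p-cost-dble} (identifying $(\xi_s,\pi_s)$ with the partial gradients along the characteristic trajectory) and linear ODE uniqueness is exactly what is implicitly intended. The only point worth tightening is the reading of the overdetermined boundary data in \er{eq:TPBVP-stat-p}: you correctly note that the two terminal conditions $\xi_T=0$, $\pi_T=\bar p_T-\grad\psi(\bar x_T)$ already pin the backward solution, and that $\bar p\in\argstat$ then forces $\xi_t=0$, so ``unique solution'' is automatic and the substantive content is the extra requirement $\pi_T=0$; this is consistent with the paper's subsequent Remark, which rephrases that requirement as injectivity of $(\wh U_{T,t}^{21})'$.
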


\begin{remark}
As indicated, Corollary \ref{cor:argstat-existence} uses a condition concerning the TPBVP \er{eq:TPBVP-stat-p} that is equivalent to the trajectory based argstat condition appearing in Theorem \ref{thm:argstat-existence}. This condition can be re-expressed via the operator
\begin{align}
	\nn
	& \wh U_{s,t} = \left( \ba{cc}
			\wh U_{s,t}^{11} & \wh U_{s,t}^{12}
			\\
			\wh U_{s,t}^{21} & \wh U_{s,t}^{22}
		\ea \right)\in\bo(\cX^2),
\end{align}
which is defined as an element of the two-parameter family generated by $-A_s'\in\bo(\cX^2)$, $s\in[t,T]$, via  \er{eq:generator}. By definition of $\wh U_{T,t}$, and by inspection of \er{eq:TPBVP-stat-p}, 
\begin{align}
	\left(\ba{c} \xi_s \\ \pi_s \ea \right)
	& = \wh U_{T,t}' \left( \ba{c} \xi_T \\ \pi_T \ea \right),
	\ s\in[t,T],
	\nn
\end{align}
so that the boundary conditions in \er{eq:TPBVP-stat-p} require
\begin{align}
	0 & = (\wh U_{T,t}^{21})' \, \pi_T.
	\nn
\end{align}
Hence, the requirement that $\pi_T = 0$ in the statement of Corollary \ref{cor:argstat-existence} amounts to an invertibility requirement for $(\wh U_{T,t}^{21})'\in\bo(\cX)$, i.e. perturbations in the terminal costate map bijectively to perturbations in the initial state. This type of condition arises in the application of the classical method of characteristics to optimal control problems, and so is unsurprising, see for example \cite{S:95}.
\hfill{$\square$}
\end{remark}

\begin{lemma}
\label{lem:verify-condition-J}
Given any $T\in\R_{>0}$, $t\in[0,T)$, $x,p\in\cX$, and $(\bar x_s,\bar p_s) \doteq \ol{X}(Y_p(x))_s$ for all $s\in[t,T]$,
\begin{align}
	\begin{gathered}
	0 = \grad_p \bar J_T(t,\bar x_t, \bar p_t)
	\\ 
	\bar p_t = \grad_x \bar J_T(t,\bar x_t,\bar p_t)
	\end{gathered} 
	& \qquad\Longleftrightarrow\qquad
	\begin{gathered}
	0 = \grad_p \bar J_T(s,\bar x_s, \bar p_s) \\ \forall \ s\in[t,T]
	\end{gathered}
	\qquad\Longleftrightarrow\qquad
	\begin{gathered}
		\bar p_s = \grad_x \bar J_T(s,\bar x_s,\bar p_s)
		\\
		\forall \ s\in[t,T].
	\end{gathered}
	\label{eq:verify-condition-J}
\end{align}
\end{lemma}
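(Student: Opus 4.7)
The plan is to reduce the three conditions in \er{eq:verify-condition-J} to simple statements about the auxiliary trajectories $(\xi_s,\pi_s)$ appearing in the final value problem \er{eq:FVP} of Proposition \ref{prop:p-cost-dble}, namely $\xi_s = \grad_p \bar J_T(s,\bar x_s,\bar p_s)$ and $\pi_s = \bar p_s - \grad_x \bar J_T(s,\bar x_s,\bar p_s)$. Under this identification, the three conditions of \er{eq:verify-condition-J} become, respectively: (A) $\xi_t = 0$ and $\pi_t = 0$; (B) $\xi_s \equiv 0$ on $[t,T]$; and (C) $\pi_s \equiv 0$ on $[t,T]$.

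The first observation is that the equivalence (B) $\Longleftrightarrow$ (C) is already contained in Lemma \ref{lem:grad-p-iff-grad-x}, since that lemma is precisely the assertion that vanishing of $\grad_p \bar J_T$ along the trajectory is equivalent to $\bar p_s = \grad_x \bar J_T(s,\bar x_s,\bar p_s)$ along the trajectory. So the only remaining task is to link the pointwise condition (A) with the along-trajectory condition (B) (or equivalently (C)).

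For (A) $\Longrightarrow$ (B), I would exploit that the FVP \er{eq:FVP} is a \emph{linear} evolution in $(\xi,\pi)$ driven by the operator $A_s$ of \er{eq:generator}, which is uniformly bounded on $[t,T]$ by virtue of \er{eq:ass-M-V}. Hence the system may equivalently be viewed as an initial value problem starting at $s = t$, and with zero data $\xi_t = 0 = \pi_t$ the unique solution is identically zero, yielding (B). For the converse (B) $\Longrightarrow$ (A), vanishing of $\xi_s$ on $[t,T]$ forces $\dot\xi_s = 0$, and then the first equation of \er{eq:FVP}, $\dot\xi_s = -\op{M}^{-1}\pi_s$, combined with invertibility of $\op{M}^{-1}$ (from coercivity of $\op{M}$ in \er{eq:ass-M-V}), yields $\pi_s \equiv 0$; restricting to $s=t$ then gives both equalities in (A).

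I do not anticipate a substantive obstacle: once the three conditions are correctly translated into vanishing statements for $(\xi,\pi)$, the argument reduces to the already-proven Lemma \ref{lem:grad-p-iff-grad-x} together with uniqueness for a linear ODE. The only minor care is to justify treating \er{eq:FVP} as an IVP propagating forward from $s=t$, which is legitimate since $s\mapsto A_s$ is continuous and uniformly bounded on $[t,T]$, so standard existence-uniqueness applies in either time direction.
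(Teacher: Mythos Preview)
Your proposal is correct and follows essentially the same route as the paper's proof: both translate the three conditions into statements about $(\xi_s,\pi_s)$ via Proposition \ref{prop:p-cost-dble}, invoke Lemma \ref{lem:grad-p-iff-grad-x} for the equivalence of the centre and right-hand conditions, and use uniqueness for the linear ODE \er{eq:FVP} (viewed forward from $s=t$ with zero data) to obtain the forward implication from (A). The only minor difference is that for (B) $\Rightarrow$ (A) the paper simply applies Lemma \ref{lem:grad-p-iff-grad-x} to get both (B) and (C), then evaluates at $s=t$, whereas you re-derive $\pi_s\equiv 0$ directly from the first equation of \er{eq:FVP}; this is a harmless redundancy.
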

\begin{proof}
Fix arbitrary $T\in\R_{>0}$, $t\in[0,T)$, $x,p\in\cX$. By Proposition \ref{prop:p-cost-dble}, and in particular \er{eq:X-s-components}, \er{eq:zeta-dynamics}, \er{eq:FVP},
\begin{align}
	\left(\ba{c} \xi_s \\ \pi_s \ea \right)
	& \doteq \left( \ba{c} 
			\grad_p \bar J_T(s,\bar x_s,\bar p_s) \\ 
			\bar p_s - \grad_x \bar J_T(s,\bar x_s,\bar p_s)
		\ea \right),
		\quad s\in[t,T],
	\label{eq:xi-pi}
\end{align}
satisfies the ODE in \er{eq:FVP}. Suppose that the left-hand condition in \er{eq:verify-condition-J} holds, i.e. $0 = (\xi_t, \pi_t)$. Hence, by inspection of the ODE in \er{eq:FVP}, and \er{eq:xi-pi}, $0 = (\xi_s,\pi_s)$ for all $s\in[t,T]$, so that the centre and right-hand conditions in \er{eq:verify-condition-J} simultaneously hold.
Conversely, suppose that either the centre or the right-hand condition in \er{eq:verify-condition-J} holds. Then, by Lemma \ref{lem:grad-p-iff-grad-x}, both the centre and right-hand conditions in \er{eq:verify-condition-J} simultaneously hold. Moreover, by inspection of \er{eq:p-cost},  \er{eq:X-s-components}, selecting $s = t$ in the both the centre and right-hand conditions in \er{eq:verify-condition-J} immediately yields the left-hand condition in \er{eq:verify-condition-J}.
\end{proof}

\begin{theorem}
\label{thm:argstat-equivalence}
Let $T\in\R_{>0}$, $t\in[0,T)$, and $\bar x, \bar p\in\cX$. Then the following statements are equivalent:

\begin{enumerate}
	\item[(i)] $\bar x \in\argstat_{x\in\cX} \left\{ \langle x,\bar p\rangle - \bar J_T(t,x,\bar p) \right\}$ and $\bar p \in\argstat_{p\in\cX} \bar J_T(t,\bar x,p)$;
	
	\item[(ii)] $\bar u\in\argstat_{u\in\cU[t,T]} J_T(t,\bar x,u)$ where
	\begin{align}
		\bar u_s
		\doteq  -\op{M}^{-1} \left( \ba{cc} 
		0 & \op{I}
		\ea \right) \ol{X}(Y_{{\bar p}}(\bar x))_s
		= -\op{M}^{-1}\, \bar p_s,\quad \forall s\in[t,T].
		\label{eq:argstat-equivalence}
	\end{align}
	
\end{enumerate}
\end{theorem}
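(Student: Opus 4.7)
The plan is to assemble Theorem \ref{thm:argstat-equivalence} as a straightforward consequence of Lemma \ref{lem:verify-condition-J} and Theorem \ref{thm:argstat-existence}, with the only new work being a first-order interpretation of the two \emph{argstat} conditions appearing in (i).

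First I would recast (i) as a pair of pointwise gradient equations at $(t,\bar x,\bar p)$. By Proposition \ref{prop:p-cost-dble}, the map $p\mapsto \bar J_T(t,\bar x,p)$ is \Frechet{} differentiable, so by definition \er{eq:stat} the condition $\bar p\in\argstat_{p\in\cX}\bar J_T(t,\bar x,p)$ is equivalent to $\grad_p \bar J_T(t,\bar x,\bar p)=0$. Similarly, $x\mapsto \langle x,\bar p\rangle - \bar J_T(t,x,\bar p)$ is \Frechet{} differentiable with Riesz representative $\bar p - \grad_x \bar J_T(t,x,\bar p)$, so $\bar x\in\argstat_{x\in\cX}\{\langle x,\bar p\rangle - \bar J_T(t,x,\bar p)\}$ is equivalent to $\bar p = \grad_x \bar J_T(t,\bar x,\bar p)$. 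Thus (i) is exactly the left-hand condition of \er{eq:verify-condition-J} (with the convention $(\bar x_t,\bar p_t)=(\bar x,\bar p)$).

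Next, applying Lemma \ref{lem:verify-condition-J}, this in turn is equivalent to the center condition of \er{eq:verify-condition-J}, namely $\bar p_s\in\argstat_{p\in\cX}\bar J_T(s,\bar x_s,p)$ for all $s\in[t,T]$, where $(\bar x_s,\bar p_s)\doteq \ol{X}(Y_{\bar p}(\bar x))_s$. This is precisely the pointwise stationarity condition appearing in statement (ii) of Theorem \ref{thm:argstat-existence} for the initial data $x=\bar x$, with the specific initial adjoint $\bar p$. Hence Theorem \ref{thm:argstat-existence}, together with its explicit formula \er{eq:argstat-existence-u}, immediately yields the forward implication (i)$\Rightarrow$(ii) of the present theorem: there exists $\bar u\in\argstat_{u\in\cU[t,T]} J_T(t,\bar x, u)$, and it is given by $\bar u_s = -\op{M}^{-1}\bar p_s$, matching \er{eq:argstat-equivalence}.

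For the reverse implication (ii)$\Rightarrow$(i), I would start from the hypothesis that the $\bar u$ in \er{eq:argstat-equivalence} is stationary. By Theorem \ref{thm:argstat-existence} (i)$\Rightarrow$(ii), there exists some $\hat p$ yielding the pointwise stationarity along the associated trajectory $(\hat x_s,\hat p_s)\doteq \ol{X}(Y_{\hat p}(\bar x))_s$, with $\bar u_s = -\op{M}^{-1}\hat p_s$ by \er{eq:argstat-existence-u}. Since $\bar u_s = -\op{M}^{-1}\bar p_s$ by hypothesis and $\op{M}$ is invertible, the two adjoint trajectories coincide, so $\hat p=\bar p$ and $(\hat x_s,\hat p_s)=(\bar x_s,\bar p_s)$; in particular $\bar p_s\in\argstat_{p\in\cX}\bar J_T(s,\bar x_s,p)$ for all $s\in[t,T]$. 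Running the first two steps backward — Lemma \ref{lem:verify-condition-J} reduces this to the pair of first-order conditions at $s=t$, which by the differentiability discussion above recovers (i). The only mildly delicate point in this plan is the matching of adjoint trajectories just described; everything else is a transparent re-indexing of results already in the paper.
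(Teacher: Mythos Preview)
Your proposal is correct and follows precisely the route the paper takes: the paper's own proof is the single line ``Immediate by Theorem \ref{thm:argstat-existence} and Lemma \ref{lem:verify-condition-J},'' and you have simply unpacked those two invocations, including the translation of the two argstat conditions in (i) into the left-hand gradient conditions of \er{eq:verify-condition-J} and the matching of adjoint trajectories via invertibility of $\op{M}$ for the reverse direction.
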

\begin{proof}
Immediate by Theorem \ref{thm:argstat-existence} and Lemma \ref{lem:verify-condition-J}. 
\end{proof}


\subsection{HJB equation and stationary trajectories}
A verification theorem exists for the cost $\bar J_T$ of \er{eq:p-cost}, \er{eq:stat-p-value}, formulated with respect to an extended Hamiltonian $\bar H:\cX^2\times\cX^2\rightarrow\R$ defined by
\begin{align}
	\bar H(x,p,\pi,\zeta)
	& \doteq -\demi\, \langle p, \, \op{M}^{-1}\, p \rangle + V(x) 
			+ \langle \pi,\, \op{M}^{-1}\, p \rangle - \langle \zeta, \, \grad V(x) \rangle
	\label{eq:H-bar}
\end{align}
for all $x,p,\pi,\zeta\in\cX$. 

\begin{theorem}
\label{thm:verify-J-bar}
Given $T\in\R_{>0}$ suppose there exists a $W\in C([0,T]\times\cX^2;\R)\cap C^1((0,T)\times\cX^2;\R)$ satisfying the partial differential equation and terminal condition
\begin{gather}
\label{eq:verify-J-bar}
\left\{
\begin{aligned}
& -\pdtone{W}{t}(t,x,p) + \bar{H}(x,p,\grad_x W(t,x,p), \grad_p W(t,x,p))=0 &&(t,x,p)\in (0,T)\times \cX^2
\\
&W(T,x,p) = \psi(x), &&(x,p)\in \cX^2,
\end{aligned}
\right.
\end{gather}
in which $\bar H$ is as per \er{eq:H-bar}, and $\psi$ is the terminal cost appearing in \er{eq:cost}, \er{eq:p-cost}. Then, $\bar J_T(t,x,p) = W(t,x,p)$ for all $t\in(0,T)$, $x,p\in\cX$, where $\bar J_T$ is as per \er{eq:p-cost}.

Conversely, $\bar J_T$ of \er{eq:p-cost} always satisfies \er{eq:verify-J-bar}, and it is consequently the unique solution.
\end{theorem}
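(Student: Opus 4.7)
The plan is to treat Theorem \ref{thm:verify-J-bar} as a classical characteristics-based verification theorem adapted to the stationary setting, where the extended Hamiltonian $\bar H$ in \er{eq:H-bar} has been engineered precisely so that the HJB equation \er{eq:verify-J-bar} reduces to a pure transport equation along the characteristic flow \er{eq:Cauchy-dynamics}.

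For the first assertion, fix $(t,x,p) \in (0,T) \times \cX^2$ and let $(\bar x_s, \bar p_s) \doteq \ol{X}(Y_p(x))_s$ for $s \in [t, T]$ be the classical solution supplied by Lemma \ref{lem:X-classical}. Using $W \in C^1((0,T) \times \cX^2;\R)$ and the chain rule along \er{eq:Cauchy-dynamics},
\begin{equation*}
\ts{\ddtone{}{s}} W(s, \bar x_s, \bar p_s)
= \ts{\pdtone{W}{t}}(s, \bar x_s, \bar p_s)
- \langle \grad_x W(s, \bar x_s, \bar p_s),\, \op{M}^{-1} \bar p_s \rangle
+ \langle \grad_p W(s, \bar x_s, \bar p_s),\, \grad V(\bar x_s) \rangle.
\end{equation*}
Substituting $\partial_t W = \bar H(\cdot)$ from the PDE \er{eq:verify-J-bar} using the explicit form \er{eq:H-bar}, the inner-product terms involving $\grad_x W$ and $\grad_p W$ cancel exactly, and the remaining terms collapse to $-l(\bar x_s, \bar p_s)$ by \er{eq:f}. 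Thus $\ddtone{}{s} W(s, \bar x_s, \bar p_s) = -l(\bar x_s, \bar p_s)$ on $(t, T)$. Integrating over $[t, T]$ (extending continuously to the endpoint $s = T$ using continuity of $W$ on the closed interval and continuity of $l \circ \ol{X}(Y_p(x))$) and invoking the terminal condition yields $W(t,x,p) = \psi(\bar x_T) + \int_t^T l(\bar x_s, \bar p_s)\, ds = \bar J_T(t,x,p)$ by \er{eq:p-cost}.

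For the converse, I would show directly that $\bar J_T$ satisfies \er{eq:verify-J-bar}. Joint continuous {\Frechet} differentiability of $(t,x,p) \mapsto \bar J_T(t,x,p) = \wt J_T(t, Y_x(p))$ on $(0,T) \times \cX^2$ follows by combining Propositions \ref{prop:X-cost-dble} and \ref{prop:p-cost-dble} with the chain rule, so every partial derivative appearing in \er{eq:verify-J-bar} is well defined. From \er{eq:p-cost} and the semigroup property of the autonomous flow \er{eq:Cauchy-dynamics}, for small $\delta > 0$ the dynamic programming identity
\begin{equation*}
\bar J_T(t, x, p) = \int_t^{t+\delta} l(\bar x_s, \bar p_s)\, ds + \bar J_T(t+\delta, \bar x_{t+\delta}, \bar p_{t+\delta})
\end{equation*}
holds. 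Differentiating in $\delta$ at $\delta = 0^+$ via the fundamental theorem of calculus and the chain rule yields
\begin{equation*}
0 = l(x, p) + \ts{\pdtone{\bar J_T}{t}}(t, x, p) - \langle \grad_x \bar J_T(t, x, p),\, \op{M}^{-1} p \rangle + \langle \grad_p \bar J_T(t, x, p),\, \grad V(x) \rangle,
\end{equation*}
which rearranges to \er{eq:verify-J-bar} upon invoking \er{eq:H-bar}. The terminal condition $\bar J_T(T, x, p) = \psi(x)$ is immediate from \er{eq:p-cost}. Uniqueness then follows from the first part, since any solution $W$ of \er{eq:verify-J-bar} must coincide with $\bar J_T$ on $(0, T) \times \cX^2$.

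I expect no deep obstacle: the key algebraic cancellation in the forward direction is built into the design of $\bar H$, and the $C^1$ regularity needed for both arguments is already provided by the preceding propositions. The most delicate step is justifying the passage to the endpoint $s = T$ when applying the fundamental theorem of calculus in the forward direction, which is a routine limiting argument leveraging continuity of $W$ on $[0,T] \times \cX^2$ and continuity of $l \circ \ol{X}(Y_p(x))$ on $[t, T]$.
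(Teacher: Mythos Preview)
Your proposal is correct and follows essentially the same approach as the paper: differentiate $W$ along the characteristic flow \er{eq:Cauchy-dynamics}, exploit the cancellation built into $\bar H$ to reduce to $-l$, and integrate; for the converse, use the dynamic programming identity obtained by splitting the integral in \er{eq:p-cost} at an intermediate time and differentiate at that time. The paper's write-up is slightly more explicit in displaying the intermediate cancellation of the $\grad_x W$ and $\grad_p W$ terms, but the argument is the same.
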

\begin{proof}
Fix $T\in\R_{>0}$, $t\in(0,T)$, and let $W$ be as per the theorem statement. Fix any $x,p\in\cX$. With $\bar X$ as per Lemmas \ref{lem:X-classical}, \ref{lem:X-of-Y-cts}, \ref{lem:X-of-Y-dble}, let $(\bar x_s, \bar p_s) \doteq \ol{X}(Y_p(x))_s$ for all $s\in[t,T]$. Note in particular that $s\mapsto (\bar x_s, \bar p_s)$ is a classical solution of the Cauchy problem \er{eq:Cauchy-dynamics}, \er{eq:X-dynamics}. Hence, by the asserted smoothness of $W$, $s\mapsto W(s,\bar x_s, \bar p_s)$ is differentiable, so that the chain rule and \er{eq:verify-J-bar} yield
\begin{align}
	\ts{\ddtone{}{s}} W(s,\bar x_s,\bar p_s)
	& = \ts{\pdtone{}{s}} W(s,\bar x_s,\bar p_s) + \langle \grad_x W(s,\bar x_s,\bar p_s), \, \dot{\bar x}_s \rangle 
			+ \langle \grad_p W(s,\bar x_s,\bar p_s), \, \dot{\bar p}_s \rangle
	\nn\\
	& = - [ -\ts{\pdtone{}{s}} W(s,\bar x_s,\bar p_s) + \bar{H}(x,p,\grad_x W(s,\bar x_s,\bar p_s), \grad_p W(s,\bar x_s,\bar p_s)) ]
	\nn\\
	& \hspace{20mm}
		+ \bar{H}(x,p,\grad_x W(s,\bar x_s,\bar p_s), \grad_p W(s,\bar x_s,\bar p_s))
	\nn\\
	& \hspace{20mm}
		+ \langle \grad_x W(s,\bar x_s,\bar p_s), \, -\op{M}^{-1}\, \bar p_s \rangle
		+ \langle \grad_p W(s,\bar x_s,\bar p_s), \, \grad V(\bar x_s) \rangle
	\nn\\
	& =  -\demi\, \langle \bar p_s, \, \op{M}^{-1}\, \bar p_s \rangle + V(\bar x_s) 
		+ \langle \grad_x W(s,\bar x_s,\bar p_s),\, \op{M}^{-1}\, \bar p_s \rangle 
		- \langle \grad_p W(s,\bar x_s,\bar p_s), \, \grad V(\bar x_s) \rangle 
	\nn\\
	& \hspace{20mm}
		+ \langle \grad_x W(s,\bar x_s,\bar p_s), \, -\op{M}^{-1}\, \bar p_s \rangle
		+ \langle \grad_p W(s,\bar x_s,\bar p_s), \, \grad V(\bar x_s) \rangle
	\nn\\
	& = -\demi\, \langle \bar p_s, \, \op{M}^{-1}\, \bar p_s \rangle + V(\bar x_s),
	\nn
\end{align}
for all $s\in(t,T)$. Integrating with respect to $s\in(t,T)$, and recalling the boundary condition in \er{eq:verify-J-bar}, subsequently yields
\begin{align}	
	\psi(\bar x_T) - W(t,x,p) = 
	W(T,\bar x_T, \bar p_T) - W(t,x,p)
	& = \int_t^T -\demi\, \langle \bar p_s, \, \op{M}^{-1}\, \bar p_s \rangle + V(\bar x_s) \, ds.
	\nn
\end{align}
Rearranging, and recalling \er{eq:p-cost}, yields the asserted equality $\bar J_T(t,x,p) = W(t,x,p)$. Recalling that $t\in(0,T)$, $x,p\in\cX$ are arbitrary yields the first assertion.

For the converse, note by Proposition \ref{prop:X-cost-dble} that $\bar J_T\in C([0,T]\times\cX^2;\R)\cap C^1((0,T)\times\cX^2;\R)$. Fix $x,p\in\cX$. Note by \er{eq:p-cost} that $\bar J_T(T,x,p) = \psi(x)$, so that the terminal condition in \er{eq:verify-J-bar} trivially holds. Fix $t\in(0,T)$, and let $(\bar x_s, \bar p_s) \doteq \ol{X}(Y_p(x))_s$ for all $s\in[t,T]$. Fix $r\in(t,T]$. By \er{eq:p-cost},
\begin{align}
	\bar J_T(t,x,p)
	& = \int_t^r -\demi\, \langle \bar p_s, \, \op{M}^{-1}\, \bar p_s \rangle + V(\bar x_s) \, ds
			+ \int_r^T -\demi\, \langle \bar p_s, \, \op{M}^{-1}\, \bar p_s \rangle + V(\bar x_s) \, ds + \psi(\bar x_T)
	\nn\\
	& = \int_t^r -\demi\, \langle \bar p_s, \, \op{M}^{-1}\, \bar p_s \rangle + V(\bar x_s) \, ds + \bar J_T(r,\bar x_r, \bar p_r).
	\nn
\end{align}
Dividing through by $r-t$ and sending $r\rightarrow t^+$, yields
\begin{align}
	-\demi\, \langle p, \, \op{M}^{-1}\, p \rangle + V(x)
	& = 
	\ts{\ddtone{}{t}} \bar J_T(t,\bar x_t, \bar p_t)
	\nn\\
	& = \ts{\pdtone{}{t}} \bar J_T(t,x,p) + \langle \grad_x \bar J_T(t,x,p), \, -\op{M}^{-1}\, p \rangle 
	+ \langle \grad_p \bar J_T(t,x,p), \, \grad V(x) \rangle,
	\nn
\end{align}
i.e. $\bar J_T$ satisfies \er{eq:verify-J-bar}, and uniqueness follows by the first assertion.
\end{proof}

\begin{theorem}
\label{thm:verify}
Given $T\in\R_{>0}$, suppose there exists a $W\in C([0,T]\times\cX^2;\R)\cap C^1((0,T)\times\cX^2;\R)$ satisfying the partial differential equation and terminal condition \er{eq:verify-J-bar}. Suppose further that, given $t\in[0,T]$ and $\bar x\in\cX$, there exists a $\bar p\in\cX$ such that 
\begin{align}
	\bar p \in \argstat_{p\in\cX} W(t,\bar x, p), 
	\qquad
	\bar x\in \argstat_{x\in\cX} \left\{ \langle x, \bar p \rangle - W(t,x,\bar p) \right\}.
	\label{eq:verify-conditions} 
\end{align}
Then, there exists a $\bar u\in \argstat_{u\in\cU[t,T]} J_T(t,\bar x,u)$ such that
\begin{gather}
	\bar x_s = \bar x + \int_t^s \bar u_\sigma \, d\sigma,
	\quad
	\bar p_s \in\argstat_{p\in\cX} W(s,\bar x_s,p),
	\nn\\
	\bar u_s 
		= -\op{M}^{-1} \, \grad_x W(s,\bar x_s, \bar p_s) = -\op{M}^{-1}\, \bar p_s,
	\label{eq:verify-u-stat}
\end{gather}
for all $s\in[t,T]$.
\end{theorem}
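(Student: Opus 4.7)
The plan is to reduce the statement to the already established equivalence in Theorem \ref{thm:argstat-equivalence} by identifying the assumed $W$ with the cost functional $\bar J_T$, and then to read off the additional trajectory information from Theorem \ref{thm:argstat-existence} and Lemma \ref{lem:grad-p-iff-grad-x}.

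First, by the converse (uniqueness) assertion of Theorem \ref{thm:verify-J-bar}, any $W\in C([0,T]\times\cX^2;\R)\cap C^1((0,T)\times\cX^2;\R)$ satisfying the HJB equation \er{eq:verify-J-bar} coincides with $\bar J_T$ on $(0,T)\times\cX^2$, and hence on $[0,T]\times\cX^2$ by joint continuity and the common terminal condition $W(T,x,p)=\psi(x)=\bar J_T(T,x,p)$. Consequently, $\grad_x W(s,\cdot,\cdot)=\grad_x \bar J_T(s,\cdot,\cdot)$ and $\grad_p W(s,\cdot,\cdot)=\grad_p \bar J_T(s,\cdot,\cdot)$ on $(0,T)\times\cX^2$, so that the hypotheses \er{eq:verify-conditions} transcribe immediately into
\[
\bar p\in\argstat_{p\in\cX}\bar J_T(t,\bar x,p),
\qquad
\bar x\in\argstat_{x\in\cX}\bigl\{\langle x,\bar p\rangle - \bar J_T(t,\bar x,p)\bigr\},
\]
which is exactly statement (i) of Theorem \ref{thm:argstat-equivalence}.

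Next, invoking Theorem \ref{thm:argstat-equivalence} in the direction (i)$\Longrightarrow$(ii) produces the desired $\bar u\in\argstat_{u\in\cU[t,T]} J_T(t,\bar x,u)$, together with the explicit identity $\bar u_s = -\op{M}^{-1}\,\bar p_s$ in terms of the second component of $\ol{X}(Y_{\bar p}(\bar x))_s = (\bar x_s,\bar p_s)$. Since the trajectory $\bar x_s$ is produced by \er{eq:X-dynamics} from the velocity $\bar u_\sigma = -\op{M}^{-1}\bar p_\sigma$, the first identity in \er{eq:verify-u-stat} is automatic. For the propagated argstat relation along the trajectory, apply the direction (i)$\Longrightarrow$(ii) of Theorem \ref{thm:argstat-existence}: the existence of $\bar u$ implies $\bar p_s\in\argstat_{p\in\cX}\bar J_T(s,\bar x_s,p)$ for all $s\in[t,T]$, which, in view of $W=\bar J_T$, is exactly $\bar p_s\in\argstat_{p\in\cX} W(s,\bar x_s,p)$.

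Finally, to close the formula $\bar u_s = -\op{M}^{-1}\grad_x W(s,\bar x_s,\bar p_s)$, I would apply Lemma \ref{lem:grad-p-iff-grad-x}, which converts the pointwise argstat condition $\bar p_s\in\argstat_{p}\bar J_T(s,\bar x_s,p)$ (equivalently, $\grad_p \bar J_T(s,\bar x_s,\bar p_s)=0$) into $\bar p_s=\grad_x\bar J_T(s,\bar x_s,\bar p_s)=\grad_x W(s,\bar x_s,\bar p_s)$. Combined with $\bar u_s=-\op{M}^{-1}\bar p_s$ from above, this yields the remaining identities in \er{eq:verify-u-stat}. No substantial obstacle is anticipated: the work is essentially bookkeeping that aligns $W$ with $\bar J_T$ via Theorem \ref{thm:verify-J-bar}, then chains Theorem \ref{thm:argstat-equivalence}, Theorem \ref{thm:argstat-existence}, and Lemma \ref{lem:grad-p-iff-grad-x}. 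The only delicate point is handling the boundary instants $t\in\{0,T\}$, where uniqueness from Theorem \ref{thm:verify-J-bar} must be extended by continuity; the case $t=T$ is trivial because $\cU[T,T]$ is a single point and $J_T(T,\bar x,\cdot)=\psi(\bar x)$.
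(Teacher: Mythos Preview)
Your proposal is correct and follows essentially the same route as the paper: identify $W$ with $\bar J_T$ via Theorem \ref{thm:verify-J-bar}, then invoke Theorem \ref{thm:argstat-equivalence} to obtain $\bar u$ and its formula. The only cosmetic difference is that, for the propagated identities along the trajectory, the paper cites Lemma \ref{lem:verify-condition-J} directly (which packages both $\grad_p\bar J_T(s,\bar x_s,\bar p_s)=0$ and $\bar p_s=\grad_x\bar J_T(s,\bar x_s,\bar p_s)$ from the initial conditions), whereas you route through Theorem \ref{thm:argstat-existence} and then Lemma \ref{lem:grad-p-iff-grad-x}; since Lemma \ref{lem:verify-condition-J} is itself proved from Lemma \ref{lem:grad-p-iff-grad-x}, the two chains are equivalent.
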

\begin{proof}
Fix $T\in\R_{>0}$, $W\in C([0,T]\times\cX^2;\R)\cap C^1((0,T)\times\cX^2;\R)$, $t\in[0,T]$, $x\in\cX$, as per the theorem statement. Suppose that $\bar p\in\cX$ exists such that \er{eq:verify-conditions} holds. Observe by Theorem \ref{thm:verify-J-bar} that $\bar J_T \equiv W$. 
Hence, by \er{eq:verify-conditions} and Theorem \ref{thm:argstat-equivalence}, there exists $\bar u\in\argstat_{u\in\cU[t,T]} J_T(t,\bar x,u)$ satisfying $\bar u_s = -\op{M}^{-1}\, \bar p_s$ for all $s\in[t,T]$. Moreover, by Lemma \ref{lem:verify-condition-J}, $\bar p_s = \grad_x \bar J_T(s,\bar x_s,\bar p_s) = \grad_x W(s,\bar x_s,\bar p_s)$, and $0 = \grad_p \bar J_T(s,\bar x_s,\bar p_s)$ for all $s\in[t,T]$, so that \er{eq:verify-u-stat} holds.
\if{false}

??

 implies the left-hand property in \er{eq:verify-condition-J}, so that by Lemma \ref{lem:verify-condition-J}, the right-hand property in \er{eq:verify-condition-J} holds. In particular, $\bar p_s \in\argstat_{p\in\cX} \bar J_p (s,\bar x_s,p)$ for all $s\in[t,T]$. Hence, by Theorem \ref{thm:argstat-existence}, $\bar u\in\argstat_{u\in\cU} J_T(t,x,u)$ exists, and its representation \er{eq:verify-u-stat} follows by \er{eq:Cauchy-dynamics}.

\fi
\end{proof}


\begin{remark}
\
\begin{enumerate}
	\item[(i)] The characteristic system associated with \er{eq:H-bar} is
	\begin{align}
	& \left\{ \begin{aligned}
	\dot x_s
	& = -\grad_\pi \bar H(x_s,p_s,\pi_s,\zeta_s) = -\op{M}^{-1}\, p_s,
	& x_t = x,
	\\
	\dot p_s
	& = -\grad_\zeta \bar H(x_s,p_s,\pi_s,\zeta_s) = \grad V(x_s),
	& p_t = p,
	\\
	\dot \pi_s
	& = \grad_x \bar H(x_s,p_s,\pi_s,\zeta_s) = \grad V(x_s) - \grad^2 V(x_s)\, \zeta_s,
	\\
	\dot \zeta_s
	& = \grad_p \bar H(x_s,p_s,\pi_s,\zeta_s) = -\op{M}^{-1}\, (p_s - \pi_s),
	\end{aligned} \right.
	\label{eq:char-H-bar}
	\end{align}
	for all $s\in[0,T]$, in which $\pi_s = \grad_x W(s,x_s,p_s)$ and $\zeta_s = \grad_p W(s,x_s,p_s)$. By adopting the condition \er{eq:verify-conditions}, it may be noted that $p_s = \pi_s$, $\zeta_s = 0$, and $\bar H(x_s,p_s,\pi_s,\zeta_s) = H(x_s,p_s)$, for all $s\in[t,T]$.

	\item[(ii)] It may be noted that under the stated assumptions, if $\argstat_{p\in \cX}\bar J_T(t,x,p)$ is convex for a.e. $t\in(0,T)$ and all $x\in \cX$, then from the $C^1$ regularity of $\bar J_T$ (c.f. Proposition \ref{prop:X-cost-dble}) it follows that $\stat_{p\in \cX}\bar J_T(t,x,p)$ is single-valued for a.e. $t\in(0,T)$ and all $x\in \cX$ and $\ol W_T(\cdot,\cdot)$ is the (viscosity) solution of the HJB equation
	\begin{gather}
	\left\{
	\begin{aligned}
	& -\pdtone{W}{t}(t,x) + {H}(x,\grad_x W(t,x))=0 &&(t,x)\in [0,T]\times \cX
	\\
	&W(T,x) = \psi(x), &&x\in \cX,
	\end{aligned}
	\right.
	\label{viscosity}
	\end{gather}
	where $H$ is the Hamiltonian defined in (\ref{eq:H}). Indeed, since it is known that the minimal selection $W_T$ is the unique viscosity solution of (\ref{viscosity}) (c.f. \cite{CS:04,S:95}), it is sufficient to show that
	\begin{align}
	W_T(t,x)=\ol W_T(t,x)\quad \forall t\in[0,T], \,\forall x\in \cX.
	\label{claim}
	\end{align}
	Fix $t\in [0,T]$ and all $x\in \cX$. Then, letting $L(y,u)=\demi \langle  { u}, \, {M}\,  { u} \rangle - V(y)$ and using that \(H(y, p)=p \nabla_pH(y, p)-L\left(y, \nabla_pH(y, p)\right)\) for any $p,y\in \cX$, applying Theorem \ref{thm:stat-u-char}, and keeping the same notation, we have that there exists $p\in \cX$ satisfying
	\begin{align}
	J_T(t, x,\bar u)&=\psi(\bar x_T)+\int_t^T L(\bar x_s,{\bar u}_s) ds
	=\psi(\bar x_T)+\int_{t}^{T} L\left(\bar x_s, \nabla_pH(\bar x_s, \bar p_s)\right) d s
	\nn\\
	&=\psi(\bar x_T)+\int_{t}^{T}\left(-H(\bar x_s, \bar p_s)+\langle \bar p_s , \nabla_p H(\bar x_s, \bar p_s)\rangle \right) d s=\bar J_T(t, x, p),
	\nn
	\end{align}
	where $(\bar x,\bar p)$ is the Hamiltonian flow  (\ref{eq:Cauchy-dynamics}) with initial condition $(x,p)$.	Hence, the minimal selection coincide with $\ol W_T$, and (\ref{claim}) follows.
	\hfill{$\square$}
\end{enumerate}
\end{remark}


\section{A one-dimensional example}
\label{sec:example}
A one-dimensional linear mass-spring system consists of a mass $\op{M}\doteq m\in\R_{>0}$ located at position $x\in\R$ whose motion is a consequence of a quadratic potential field $V:\R\rightarrow\R_{\ge 0}$, $V(x)\doteq \demi\, \kappa\, x^2$, $x\in\R$. For simplicity, supposes that the terminal velocity $v\in\R$ of this mass is of interest. The terminal cost $\psi:\R\rightarrow\R$ in \er{eq:p-cost}, \er{eq:X-cost} is consequently defined by $\psi(x) \doteq -m\, v\, x$ for all $x\in\R$. Observe by inspection that \er{eq:ass-M-V} holds, with $K \doteq 2\, \kappa$. With a view to demonstrating the existence of an explicit solution to \er{thm:verify-J-bar}, fix $t\in[0,T]$, and define
\begin{gather}
	\Sigma
	\doteq \left( \ba{cc}
		\kappa & 0 \\
		0 & -\ts{\frac{1}{m}}
	\ea \right),
	\qquad
	\Gamma
	\doteq \left( \ba{cc}
			0 & -\ts{\frac{1}{m}}
			\\
			\kappa & 0 
		\ea \right).
	\label{eq:Gamma-Sigma}
\end{gather}
Recalling \er{eq:H-bar}, \er{eq:Gamma-Sigma}, note that the PDE \er{eq:verify-J-bar} may be compactly written as 
\begin{align}
	0
	& = - \pdtone{W}{s} (s,x,p) + \demi \left\langle \left( \ba{c} x \\ p \ea \right), \, \Sigma \left( \ba{c} x \\ p \ea \right) \right\rangle
		- \left\langle \left( \ba{c} \grad_x W(s,x,p) \\ \grad_p W(s,x,p) \ea \right), \, 
								\Gamma \left( \ba{c} x \\ p \ea \right) \right\rangle
	\nn\\
	& = - \pdtone{W}{s} (s,Y) + \demi\, \langle Y, \, \Sigma\, Y \rangle - \langle \grad_Y W(s,Y), \, \Gamma\, Y \rangle
	\label{eq:compact-PDE}
\end{align}
for all $s\in(t,T)$, $Y\doteq Y_x(p)\in\cX^2$. Define $\wt W:[t,T]\times\cX^2\rightarrow\R$ by
\begin{gather}
	\wt W(s,Y)
	\doteq \demi\, \langle Y, \, P_s\, Y \rangle + \langle Q_s, \, Y\rangle,
	\label{eq:ex-W}
	\\
	P_s
	\doteq - \int_s^T \exp(\Gamma' \, (\sigma-s))\, \Sigma\, \exp(\Gamma \, (\sigma-s))\, d\sigma,
	\quad
	Q_s 
	\doteq \exp(\Gamma' \, (T-s))\, \left( \ba{c} -m\, v \\ 0 \ea \right),
	\label{eq:ex-P-Q}
\end{gather}
for all $s\in(t,T)$, $Y\in\cX^2$. Applying Leibniz, note that 
\begin{align}
	\dot P_s = \Sigma - \Gamma' \, P_s - P_s\, \Gamma, \qquad
	\dot Q_s = -\Gamma' \, Q_s,
	\label{eq:ex-P-Q-dot}
\end{align}
for all $s\in(t,T)$. Hence, differentiating \er{eq:ex-W} yields
\begin{align}
	\ts{\pdtone{}{s}} \wt W(s,Y)
	& = \demi\, \langle Y, \, \dot P_s\, Y \rangle + \langle \dot Q_s, \, Y\rangle, 
	\qquad
	\grad_Y \wt W(s,Y) = P_s\, Y + Q_s.
	\nn
\end{align}
Substituting these derivatives in the right-hand side of \er{eq:compact-PDE}, and applying \er{eq:ex-P-Q-dot}, subsequently yields
\begin{align}
	& - \pdtone{\wt W}{s} (s,Y) + \demi\, \langle Y, \, \Sigma\, Y \rangle - \langle \grad_Y \wt W(s,Y), \, \Gamma\, Y \rangle
	\nn\\
	& \hspace{20mm}
	= -\demi\, \langle Y, \, \dot P_s\, Y \rangle - \langle \dot Q_s, \, Y\rangle + \demi\, \langle Y, \, \Sigma\, Y \rangle
	- \langle P_s\, Y + Q_s, \, \Gamma\, Y \rangle
	\nn\\
	& \hspace{20mm}
	= \demi\, \langle Y, \, [ -\dot P_s + \Sigma - P_s\, \Gamma - \Gamma'\, P_s ]\, Y \rangle
	+ \langle -\dot Q_s - \Gamma' \, Q_s, \, Y \rangle
	\nn\\
	& \hspace{20mm}
	= 0,
	\nn
\end{align}
for all $s\in(t,T)$, $Y\in\cX^2$. Note further that $\wt W(T,x,p) = \wt W(T,Y) = \langle Q_T, \, Y \rangle = -m\, v\, x = \psi(x)$.
Hence, $\wt W$ of \er{eq:ex-W} is a solution of the PDE and terminal condition of \er{eq:verify-J-bar}. Hence, by Theorem \ref{thm:verify-J-bar}, the cost $\bar J_T(s,x,p)$ of \er{eq:p-cost}, \er{eq:X-cost} is given explicitly by $\bar J_T(s,x,p) = \wt W(s,Y_x(p))$ for all $s\in[t,T]$, $x,p\in\cX$. By diagonalizing $\Gamma$, direct integration of \er{eq:ex-P-Q} yields
\begin{align}
	P_s
	& = \demi \left( \ba{cc}
		-\frac{\kappa}{\omega}\, \sin(2\, \omega\, (T-s)) & 1 - \cos(2\, \omega(T-s))
		\\
		1 - \cos(2\, \omega\, (T-s))  & \frac{1}{m\, \omega}\, \sin(2\, \omega\, (T-s))
	\ea \right),
	\quad \omega\doteq \sqrt{\frac{\kappa}{m}},
	\nn\\
	Q_s
	& = -m\, v\, \left( \ba{c}
		\cos(\omega\, (T-s))
		\\
		-\frac{\omega}{\kappa}\, \sin(\omega\, (T-s))
	\ea \right).
	\nn
\end{align}

\begin{figure}[h]
	\begin{center}
		\vspace{5mm}
		\begin{subfigure}[b]{0.45\textwidth}
			\includegraphics[width=\textwidth,height=55mm]{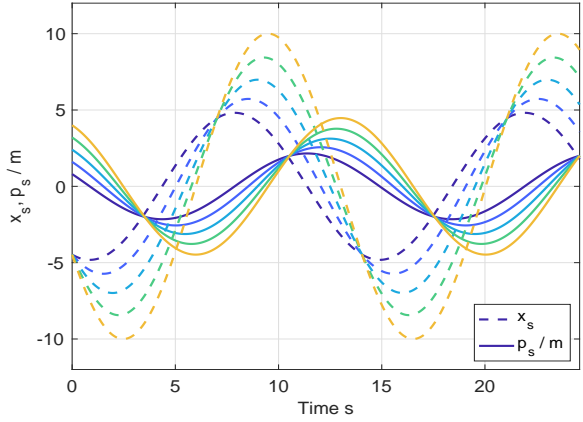}
			\caption{Time evolution.}
			\label{fig:xp-vs-t-2D}
		\end{subfigure}
		\quad\quad
		\begin{subfigure}[b]{0.45\textwidth}
			\includegraphics[width=\textwidth,height=55.5mm]{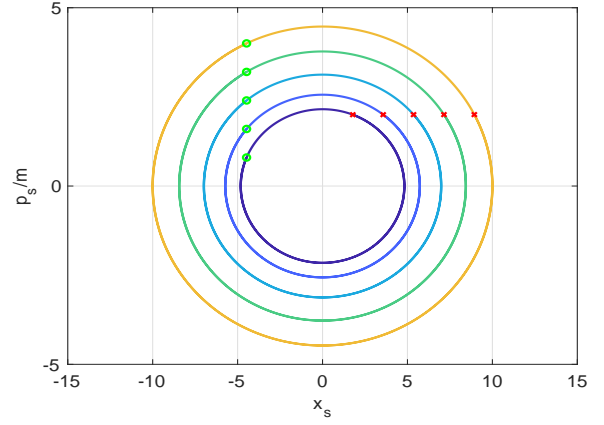}
			\caption{Phase portrait.}
			\label{fig:x-vs-p-2D}
		\end{subfigure}
		\begin{subfigure}[b]{0.6\textwidth}
			\vspace{5mm}
			\includegraphics[width=\textwidth,height=55mm]{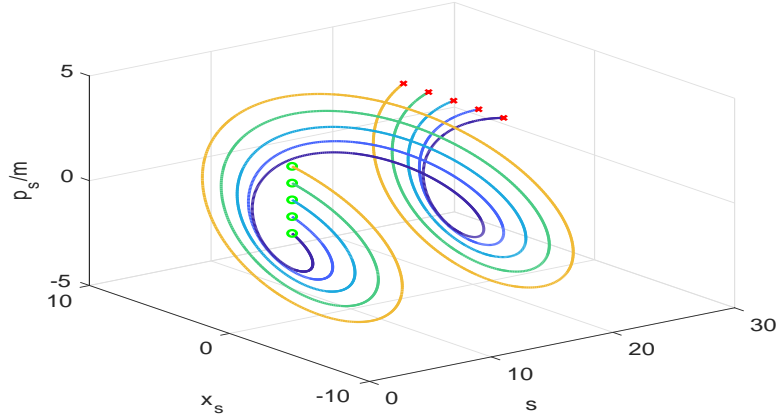}
			\caption{Time evolution of phase portrait.}
			\label{fig:xp-vs-t-3D}
		\end{subfigure}
		
		\caption{State and costate trajectories.}
		
	\end{center}
\end{figure}
With a view to illustrating Theorem \ref{thm:verify}, fix $x\in\R$, and note that
\begin{align}
	\grad_x \wt W(t,x,p)
	& = ( \ba{cc} 1 & 0 \ea ) \, \grad_Y \wt W(t,Y_x(p))
	= ( \ba{cc} 1 & 0 \ea ) \, \left( P_t\, \left( \ba{c} x \\ p \ea \right) + Q_t \right)
	\nn\\
	& = -\ts{\frac{\kappa}{2\, \omega}}\, \sin(2\, \omega\, (T-t))\, x + \demi\, [1 - \cos(2\, \omega(T-t))]\, p
		- m\, v\, \cos(\omega\, (T-t)),
	\label{eq:ex-grad-x-W}
	\\
	\grad_p \wt W(t,x,p) 
	& = ( \ba{cc} 0 & 1 \ea ) \, \grad_Y \wt W(t,Y_x(p))
	= ( \ba{cc} 0 & 1 \ea ) \, \left( P_t\, \left( \ba{c} x \\ p \ea \right) + Q_t \right)
	\nn\\
	& = 
	\demi\, [1 - \cos(2\, \omega\, (T-t))]\, x + \ts{\frac{1}{2\, m\, \omega}}\, \sin(2\, \omega\, (T-t))\, p 
		+ m\, v\, (\ts{\frac{\omega}{\kappa}})\, \sin(\omega\, (T-t)).
	\label{eq:ex-grad-p-W}
\end{align}
Note that $m\, \omega = \sqrt{\kappa\, m} = \frac{\kappa}{\omega}$. Motivated by \er{eq:verify-conditions}, let $\bar p\in\R$ be such that $0 = \grad_p \wt W(t,x,\bar p)$ and $\bar p = \grad_x \wt W(t,x,\bar p)$ via \er{eq:ex-grad-x-W} and \er{eq:ex-grad-p-W}. Applying double angle formulae subsequently yields the system of linear equations
\begin{align}
	& \Omega_t \left( \ba{c} x \sqrt{\kappa\, m} \\ \bar p \ea \right)
	= 
	\Theta_t,	
	\label{eq:Omega-Theta}
\end{align}
in which
\begin{align}
	\Omega_t 
	& \doteq \left( \ba{cc}
		\sin^2(\omega\, (T-t)) & \sin(\omega\, (T-t))\, \cos(\omega\, (T-t))
		\\
		\sin(\omega\, (T-t))\, \cos(\omega\, (T-t)) & \cos^2(\omega\, (T-t))
	\ea \right),
	\
	\Theta_t
	\doteq
	- m\, v \left( \ba{c}
		\sin(\omega\, (T-t))
		\\
		\cos(\omega\, (T-t))
	\ea \right),
	\nn
\end{align}

By inspection, the matrix $(\ba{c|c} \Omega_t & \Theta_t \ea)\in\R^{2\times 3}$ is rank one, i.e. the two equations in \er{eq:Omega-Theta} are linearly dependent. Moreover, some minor manipulations yield
\begin{align}
	& \!\!
	\left\{ \begin{aligned}
		\bar p & = -\sqrt{\kappa\, m}\, \tan(\omega\, (T-t))\, x - m\, v\, \sec(\omega\, (T-t)),
		\!\!
		&&
		\omega\, (T-t) \not\in \left\{ n\, \pi, \, (n+\demi)\, \pi : n\in\Z \right\},
		\\
		\bar p & = (-1)^{n+1}\, m\, v,
		&& \omega\, (T-t) \in \{ n\, \pi : n\in\Z \}, \, x\in\R,
		\\
		\bar p & \text{ arbitrary},
		&& \omega\, (T-t) \in \{ (n + \demi)\, \pi : n\in\Z \}, \, x = (-1)^{n+1}\, (\ts{\frac{v}{\omega}}),
		\\
		\bar p & \text{ does not exist},
		&& \omega\, (T-t) \in \{ (n + \demi)\, \pi : n\in\Z \}, \, x \ne (-1)^{n+1}\, (\ts{\frac{v}{\omega}}).
	\end{aligned} \right.
	\nn
\end{align}
Note in the second case that $\bar p$ must correspond to the desired terminal momentum, with sign determined by whether $T-t$ is a period or half-period of the mass-spring oscillation. In the third and fourth cases, $T-t$ corresponds to a quarter or three quarter period of the mass-spring oscillation, and $\bar p$ is either arbitrary, or does not exist, depending on the specific choice of $x$. An example of the third case, where $\bar p$ is arbitrary, is illustrated in Figures \ref{fig:xp-vs-t-2D}, \ref{fig:x-vs-p-2D} and \ref{fig:xp-vs-t-3D}, for $v \doteq -2$ and $x=(-1)^{4}\, (\frac{v}{\omega}) \approx -4.47$.




\section*{Appendix}

\appendix

\section{Proofs of lemmas \ref{lem:X-classical}, \ref{lem:X-of-Y-cts}, \ref{lem:X-of-Y-dble}, and \ref{lem:U-dble}}\label{proofs_lemmata}

\begin{proof}[Proof of Lemma \ref{lem:X-classical}]
	The proof employs a standard fixed point argument, exploiting global Lipschitz continuity of $f$ of \er{eq:f}, see for example \cite[Theorem 5.1, p.127]{P:83}. We notice that the global Lipschitz continuity of $DV(x)$ directly follows from the second inequality in (\ref{eq:ass-M-V}) for j = 0. 
	\if{false}
	
	Given any $\wt X, X\in\cX^2$, 
	\begin{align}
	& \| f(\wt X) - f(X) \|^2 
	\nn\\
	& = \|\op{M}^{-1}\, (\tilde p - p) \|^2 + \| \grad V(\tilde x) - \grad V(x) \|^2
	\nn\\
	& \le \| \op{M}^{-1}\|_{\bo(\cX)}^2 \, \| \tilde p - p \|^2
	\nn\\
	& \qquad + \sup_{y\in\cX} \| \grad^2 V (y) \|_{\bo(\cX)}^2 \| \tilde x - x \|^2
	\nn\\
	& \le (\ts{\frac{1}{m}})^2 \, \| \tilde p - p \|^2 + (\ts{\frac{K}{2}})^2 \, \| \tilde x - x \|^2
	\le \alpha^2 \, \|\wt X - X\|^2,
	\nn
	\end{align}
	in which $\alpha \doteq \max(\ts{\frac{1}{m}}, \, \ts{\frac{K}{2}})$. That is, $f$ is globally Lipschitz continuous, with Lipschitz constant $\alpha$. 
	
	Fix any $T\in\R_{\ge 0}$, $t\in[0,T]$, $Y\in\cX^2$. Define an operator $\Gamma_Y:C([t,T];\cX^2)\rightarrow C([t,T];\cX^2)$ by
	\begin{align}
	[\Gamma_Y(X)]_s 
	& = Y + \int_t^s f(X_\sigma) \, d\sigma,
	\label{eq:Gamma-X}
	\end{align}
	for all $X\in C([t,T];\cX^2)$, $s\in[t,T]$. Observe by the aforementioned Lipschitz property that
	\begin{align}
	\| [\Gamma_Y(\wt X)]_s - [\Gamma_Y(X)]_s \|
	& \le \int_t^s \| f(\wt X_\sigma) - f(X_\sigma) \| \, d\sigma
	\nn\\
	& \le \alpha\, (s-t)\, \| \wt X - X \|_\infty.
	\nn
	\end{align}
	By induction, it follows that
	\begin{align}
	& \| [\Gamma_Y^k(\tilde X)]_s - [\Gamma_Y^k(X)]_s \|
	\le \frac{\alpha^k\, (s-t)^k}{k!} \, \| \wt X - X\|_\infty
	\nn 
	\end{align}
	for all $k\in\N$, $s\in[t,T]$, so that
	\begin{align}
	& \| \Gamma_Y^k(\tilde X) - \Gamma_Y^k(X) \|_\infty \le \frac{\alpha^k\, (T-t)^k}{k!} \, \| \wt X - X\|_\infty,
	\nn
	\end{align}
	for all $k\in\N$. Note by inspection that there exists a $\bar k\in\N$ sufficiently large such that $\frac{\alpha^k\, (T-t)^k}{k!}<1$ for all $k>\bar k$. Hence, by Banach's contraction mapping principle, see for example \cite{K:78}, there exists a unique fixed point $\ol{X}\in\cX^2$ of $\Gamma_Y$. In particular, by \er{eq:Gamma-X},
	\begin{align}
	\ol{X}_s
	& = Y + \int_t^s f(\ol{X}_\sigma)\, d\sigma
	\nn
	\end{align}
	for all $s\in[t,T]$. Note further that the right-hand side is differentiable, as $f$ is (Lipschitz) continuous and $\ol{X}\in C([t,T];\cX^2)$. Hence, the derivative $s\mapsto\dot{\ol{X}}_s$ exists and is continuous, with $\dot{\ol{X}}_s = f(\ol{X}_s)$, for all $s\in(t,T)$. i.e. $\ol{X}\in C([t,T];\cX^2)\cap C^1((t,T);\cX^2)$ is the unique solution of \er{eq:X-dynamics}, and it is a classical solution.
	
	\fi
\end{proof}

\begin{proof}[Proof of Lemma \ref{lem:X-of-Y-cts}]
	Fix $T\in\R_{\ge 0}$, $t\in[0,T]$, and $Y,h\in\cX^2$. Applying Lemma \ref{lem:X-classical}, there exist unique classical solutions $\ol{X}(Y)$ and $\ol{X}(Y+h)$ to \er{eq:X-dynamics} satisfying respectively $\ol{X}(Y)_t = Y$ and $\ol{X}(Y+h)_t = Y+h$. In integral form,
	\begin{align}
	\label{eq:X-of-Y-s}
	\ol{X}(Y)_s 
	& = Y + \int_t^s f(\ol{X}(Y)_\sigma) \, d\sigma,
	\\
	\ol{X}(Y+h)_s
	& = Y+h + \int_t^s f(\ol{X}(Y+h)_\sigma) \, d\sigma,
	\nn
	\end{align}
	so that
	\begin{align}
	\ol{X}(Y+h)_s - \ol{X}(Y)_s
	& = h + \int_t^s f(\ol{X}(Y+h)_\sigma) - f(\ol{X}(Y)_\sigma) \, d\sigma
	\nn
	\end{align}
	for all $s\in[t,T]$. Consequently, as $f$ is globally Lipschitz by inspection of \er{eq:f},
	\begin{align}	
	\| \ol{X}(Y+h)_s - \ol{X}(Y)_s \| 
	& \le \|h\| + \int_t^s \| f(\ol{X}(Y+h)_\sigma) - f(\ol{X}(Y)_\sigma) \| \, d\sigma
	\nn\\
	& \le \|h\| + \alpha \int_t^s \| \ol{X}(Y+h)_\sigma - \ol{X}(Y)_\sigma \| \, d\sigma
	\nn 
	\end{align}
	in which $\alpha\in\R_{\ge 0}$ is the associated Lipschitz constant. Applying Gronwall's inequality, and recalling the definition of $\|\cdot \|_\infty$, yields
	\begin{align}
	& \| \ol{X}(Y + h) - \ol{X}(Y) \|_\infty
	\le \| h \| \, \exp ( \alpha\, ( T - t) ),
	\nn
	\end{align}
	so that \er{eq:X-of-Y-cts} holds. As $Y,h\in\cX^2$ are arbitrary, continuity is immediate.
\end{proof}

\begin{proof}[Proof of Lemma \ref{lem:X-of-Y-dble}]
	Fix $T\in\R_{\ge 0}$, $t\in[0,T]$, and $\ol{X}\in C(\cX^2;C([t,T];\cX^2)$ as per Lemma \ref{lem:X-of-Y-cts}. Fix $Y\in\cX^2$ and $s\mapsto A(Y)_s$ as per \er{eq:generator}, and note that \er{eq:two-parameter} follows by \cite[Theorem 5.2, p.128]{P:83}. Fix any $h\in\cX^2$, $s\in[t,T]$, and note by inspection of \er{eq:f} that $A(Y)_s = D f(\ol{X}(Y)_s)$. Hence, recalling \er{eq:X-of-Y-s},
	\begin{align}
	& \ol{X}(Y+h)_s - \ol{X}(Y)_s - U_{s,t}(Y)\, h
	= \int_t^s  f(\ol{X}(Y+h)_\sigma) - f(\ol{X}(Y)_\sigma) - A(Y)_\sigma\, U_{\sigma, t}(Y) \, h \, d\sigma
	\nn\\
	& = \int_t^s f(\ol{X}(Y)_\sigma + [\ol{X}(Y+h)_\sigma - \ol{X}(Y)_\sigma]) - f(\ol{X}(Y)_\sigma)
	- D f(\ol{X}(Y)_\sigma)\, U_{\sigma, t}(Y) \, h \, d\sigma
	\nn\\
	& = \int_t^s f(\ol{X}(Y)_\sigma + [\ol{X}(Y+h)_\sigma - \ol{X}(Y)_\sigma]) - f(\ol{X}(Y)_\sigma)
	- D f(\ol{X}(Y)_\sigma)\, [ \ol{X}(Y+h)_\sigma - \ol{X}(Y)_\sigma ]
	\nn\\
	&
	\hspace{10mm}
	+ D f(\ol{X}(Y)_\sigma)\, [ \ol{X}(Y+h)_\sigma - \ol{X}(Y)_\sigma - U_{\sigma, t}(Y) \, h] \, d\sigma.
	\label{eq:pre-D-X-of-Y}
	\end{align}
	Define $\bar I_f:C([t,T];\cX^2)\rightarrow C([t,T];\cX^2)$ by
	\begin{align}
	\label{eq:I-f}
	& \bar I_f(X)_s
	\doteq \int_t^s f(X_\sigma) \, d\sigma
	\end{align}
	for all $X\in C([t,T];\cX^2)$. Note that $Y\mapsto f(Y)$ is twice {\Frechet} differentiable by \er{eq:ass-M-V}, with $D^2 f(Y)\in\bo(\cX^2;\bo(\cX^2)) = \bo(\cX^2\times\cX^2;\cX^2)$ for all $Y\in\cX^2$. Again by \er{eq:ass-M-V}, there exists an $M\in\R_{>0}$ such that
	\begin{align}
	& \sup_{Y\in\cX^2} \| D^2 f(Y) \|_{\bo(\cX^2\times\cX^2;\cX^2)} \le M < \infty.
	\nn
	\end{align}
	Hence, by Taylor's theorem, given $X,\delta\in C([t,T];\cX^2)$,
	\begin{align}
	& \left\| \bar I_f(X + \delta)_s - \bar I_f(X)_s - \int_t^s D f(X_\sigma)\, \delta_\sigma \, d\sigma \right\|
	\le  \int_t^s \| f(X_\sigma + \delta_\sigma) - f(X_\sigma) - D f(X_\sigma)\, \delta_\sigma \| \, d\sigma
	\nn\\
	& = \int_t^s \left\| \left( \int_0^1 (1-\eta) \, D^2 f(X_\sigma + \eta\, \delta_\sigma)\, d\eta \right) (\delta_\sigma, \delta_\sigma)
	\right\| d\sigma
	\nn\\
	& \le \int_t^s \int_0^1 (1-\eta) \, \|D^2 f(X_\sigma + \eta\, \delta_\sigma)\|_{\bo(\cX^2\times\cX^2;\cX^2)} \,
	\| \delta_\sigma\|^2 \, d\sigma
	\le \ts{\frac{M}{2}} \int_t^s \|\delta_\sigma\|^2 \, d\sigma
	\le \ts{\frac{M}{2}}\, (s-t)\, \|\delta\|_\infty^2.
	\nn 
	\end{align}
	That is,
	\begin{align}
	& \left\| \bar I_f(X + \delta) - \bar I_f(X) - \int_t^{(\cdot)} D f(X_\sigma)\, \delta_\sigma \, d\sigma \right\|_\infty
	\le \ts{\frac{M}{2}}\, (T-t)\, \|\delta\|_\infty^2,
	\nn
	\end{align}
	so that $\bar I_f$ is {\Frechet} differentiable with derivative 
	\begin{align}
	[D \bar I_f(X)\, \delta]_s
	& = \int_t^s D f(X_\sigma)\, \delta_\sigma\, d\sigma
	\label{eq:D-I-f}
	\end{align}
	for all $X,\delta\in C([t,T];\cX^2)$, $s\in[t,T]$. So, recalling \er{eq:pre-D-X-of-Y}, and \er{eq:df},
	\begin{align}
	\ol{X}(Y+h)_s - \ol{X}(Y)_s - U_{s,t}(Y)\, h
	& = [d[{\bar I_f}]_{\ol{X}(Y)} ( \ol{X}(Y+h) - \ol{X}(Y) )]_s \, \| \ol{X}(Y+h) - \ol{X}(Y) \|_\infty
	\nn\\
	& \quad + \int_t^s Df(\ol{X}(Y)_\sigma) \, [ \ol{X}(Y+h)_\sigma - \ol{X}(Y)_\sigma - U_{\sigma,t}(Y)\, h ]\, d\sigma.
	\nn
	\end{align}
	Noting that $L\doteq \sup_{\sigma\in[t,T]} \|D f(\ol{X}(Y)_\sigma) \|_{\bo(\cX^2)} < \infty$, taking the norm of both sides yields
	\begin{align}
	\| \ol{X}(Y+h)_s - \ol{X}(Y)_s - U_{s,t}(Y)\, h \|
	& \le \| d[{\bar I_f}]_{\ol{X}(Y)} ( \ol{X}(Y+h) - \ol{X}(Y) ) \|_\infty \, \| \ol{X}(Y+h) - \ol{X}(Y) \|_\infty
	\nn\\
	& \quad + \int_t^s L\, \| \ol{X}(Y+h)_\sigma - \ol{X}(Y)_\sigma - U_{\sigma,t}(Y)\, h \| \, d\sigma.
	\nn
	\end{align}
	Hence, by Gronwall's inequality,
	\begin{align}
	& \hspace{-10mm} \| \ol{X}(Y+h)_s - \ol{X}(Y)_s - U_{s,t}(Y)\, h \|
	\nn\\
	& \le \| d[{\bar I_f}]_{\ol{X}(Y)} ( \ol{X}(Y+h) - \ol{X}(Y) ) \|_\infty \, \| \ol{X}(Y+h) - \ol{X}(Y) \|_\infty\,
	\exp(L\, (T-t)),
	\nn
	\end{align}
	or, with $\theta_Y(h) \doteq  \| d[{\bar I_f}]_{\ol{X}(Y)} ( \ol{X}(Y+h) - \ol{X}(Y) ) \|_\infty$,
	\begin{align}
	\| \ol{X}(Y+h) - \ol{X}(Y) - U_{\cdot,t}(Y)\, h \|_\infty
	& \le \theta_Y(h) \, \| \ol{X}(Y+h) - \ol{X}(Y) \|_\infty\, \exp(L\, (T-t))
	\nn\\
	& \le \theta_Y(h) \, \| \ol{X}(Y+h) - \ol{X}(Y) - U_{\cdot,t}(Y) \, h \|_\infty\, \exp(L\, (T-t)) 
	\nn\\
	& \qquad
	+ 
	\theta_Y(h) \, \sup_{s\in[t,T]} \| U_{s,t}(Y)\|_{\bo(\cX^2)} \, \| h \| \, \exp(L\, (T-t)).
	\nn
	\end{align}
	As $\theta_Y$ is continuous at $0$, there exists an $r>0$ sufficiently small such that
	$h\in\cB_0(r)$ implies that $\theta_Y(h) \exp(L\, (T-t)) < \demi$.
	Hence, with $h\in\cB_0(r)$,
	\begin{align}
	\| \ol{X}(Y+h) - \ol{X}(Y) - U_{\cdot,t}(Y)\, h \|_\infty
	& < 2\, \theta_Y(h) \, \sup_{s\in[t,T]} \| U_{s,t}(Y) \|_{\bo(\cX^2)} \, \| h \| \, \exp(L\, (T-t))
	\nn\\
	& = Q\  \theta_Y(h) \, \|h\|,
	\nn
	\end{align}
	in which $Q \doteq 2\, \sup_{s\in[t,T]} \| U_{s,t}(Y) \|_{\bo(\cX^2)}\, \exp(L\, (T-t))$. Consequently, taking a limit,
	\begin{align}
	& \lim_{\|h\|\rightarrow 0}
	\frac{\| \ol{X}(Y+h) - \ol{X}(Y) - U_{\cdot,t}(Y)\, h \|_\infty}{\|h\|}
	\le \lim_{\|h\|\rightarrow 0}  Q\, \theta_Y(h)  = 0.
	\nn
	\end{align}
	That is, $Y\mapsto\ol{X}(Y)$ is {\Frechet} differentiable, with the indicated derivative.
\end{proof}

\begin{proof}[Proof of Lemma \ref{lem:U-dble}]
	Fix $T\in\R_{>0}$, $t\in[0,T]$ as per the lemma statement. It is first demonstrated that $Y\mapsto U_{s,r}(Y)$ is continuous, uniformly in $r,s\in[t,T]$, as this motivates the subsequent proof of continuous differentiability.
	Fix $r,s\in[t,T]$, $h,\hat h\in\cX^2$. As $U_{s,r}(Y)\in\bo(\cX^2)$ is an element of the two-parameter family of evolution operators generated by $A(Y)_s\in\bo(\cX^2)$, see \er{eq:generator}, 
	\begin{align}
	U_{s,r}(Y)\, h & = h + \int_r^s A(Y)_\sigma\, U_{\sigma,r}(Y) \, h \, d\sigma,
	\nn\\
	U_{s,r}(Y+\hat h)\, h & = h + \int_r^s A(Y + \hat h)_\sigma\, U_{\sigma,r}(Y+\hat h) \, h \, d\sigma,
	\nn
	\end{align}
	so that 
	\begin{align}
	& [U_{s,r} (Y+ \hat h) - U_{s,r}(Y)]\, h
	= \int_r^s [ A(Y + \hat h)_\sigma\, U_{\sigma,r}(Y+\hat h) - A(Y)_\sigma\, U_{\sigma,r}(Y) ]\, h \, d\sigma
	\nn\\
	& = \int_r^s [ A(Y + \hat h)_\sigma - A(Y)_\sigma ]\,  [ U_{\sigma,r}(Y+\hat h) - U_{\sigma,r}(Y) ]\, h \, d\sigma
	\nn\\
	& \qquad 
	+ \int_r^s A(Y)_\sigma \, [ U_{\sigma,r}(Y+\hat h) - U_{\sigma,r}(Y) ]\, h\, d\sigma 
	+ \int_r^s [ A(Y + \hat h)_\sigma - A(Y)_\sigma ] \, U_{\sigma,r}(Y) \, h\, d\sigma.
	\label{eq:pre-Gronwall-U-cts-0}
	\end{align}
	Hence, by the triangle inequality,
	\begin{align}
	\| [ U_{s,r} (Y+ \hat h) - U_{s,r}(Y) ]\, h \|
	& \le \int_r^s \| A(Y + \hat h)_\sigma - A(Y)_\sigma \|_{\bo(\cX^2)} \, \| [ U_{\sigma,r}(Y+\hat h) - U_{\sigma,r}(Y) ]\, h\|\, d\sigma
	\nn\\
	& \qquad
	+ \int_r^s \| A(Y)_\sigma \|_{\bo(\cX^2)} \, \| [ U_{\sigma,r}(Y+\hat h) - U_{\sigma,r}(Y) ]\, h\|\, d\sigma
	\nn\\
	& 
	\qquad 
	+ \int_r^s \| A(Y + \hat h)_\sigma - A(Y)_\sigma \|_{\bo(\cX^2)} \, \| U_{\sigma,r}(Y) \, h \| \, d\sigma.
	\label{eq:pre-Gronwall-U-cts}
	\end{align}
	Recall \er{eq:ass-M-V}, and in particular the uniform bound on $x\mapsto D\grad^2 V(x)$. Given $x,\bar x\in\cX^2$, the mean value theorem implies that $\grad^2 V(x) - \grad^2 V(\bar x) = ( \int_0^1 D\grad^2 V(\bar x + \eta\, (x - \bar x)) \, d\eta ) (x - \bar x)$, so that $\| \grad^2 V(x) - \grad^2 V(\bar x)\|_{\bo(\cX)} \le \frac{K}{2}\, \|x - \bar x\|$ by \er{eq:ass-M-V}. 
	Hence, by \er{eq:generator}, there exists an $\alpha_1\in\R_{\ge 0}$ such that $\Lambda:\cX^2\rightarrow\bo(\cX^2)$ satisfies $\| \Lambda(Z) - \Lambda(\bar Z) \|_{\bo(\cX^2)} \le \alpha_1 \| Z - \bar Z \|$ for all $Z,\bar Z\in\cX^2$. So, applying Lemma \ref{lem:X-of-Y-cts}, there exists an $\alpha\in\R_{\ge 0}$, 
	$L_0 \doteq \sup_{\sigma\in[t,T]}\| A(0)_\sigma \|_{\bo(\cX^2)}$, $L_1 \doteq \alpha_1\, \exp(\alpha\, (T-t))$, 
	such that
	\begin{align}
	\sup_{\sigma\in[t,T]} \|A(Y+\hat h)_\sigma - A(Y)_\sigma \|_{\bo(\cX^2)}
	& \le \alpha_1 \sup_{\sigma\in[t,T]} \| \ol{X}(Y+\hat h)_\sigma - \ol{X}(Y)_\sigma \|
	\le L_1 \, \|\hat h\|,
	\label{eq:A-of-X-bar-cts}
	\\
	\sup_{\sigma\in[t,T]} \| A(Y)_\sigma \|_{\bo(\cX^2)} 
	& \le L_0 + L_1 \, \| \hat h \|,
	\nn
	\end{align}
	in which the second inequality follows from the first, via the triangle inequality, by selecting $\hat h = -Y$. Note further that as $\sigma\mapsto A(Y)_\sigma$ is continuous, $L_2 \doteq \sup_{\sigma\in [t,T]} \| U_{\sigma, t} (Y) \|_{\bo(\cX^2)} < \infty$, see \cite[Theorem 5.2, p.128]{P:83}.  Hence, substituting these inequalities in \er{eq:pre-Gronwall-U-cts} yields
	\begin{align}
	\| [ U_{s,r} (Y+ \hat h) - U_{s,r}(Y) ]\, h \|
	& \le (L_0 + 2\, L_1 \, \|\hat h\|) \int_r^s \| [ U_{\sigma,r}(Y+\hat h) - U_{\sigma,r}(Y) ]\, h\|\, d\sigma
	+ (T-t)\, L_1 \, L_2\, \|\hat h\| \, \|h\|.
	\nn
	\end{align}
	Gronwall's inequality subsequently implies that
	\begin{align}
	& \| [ U_{s,r} (Y+ \hat h) - U_{s,r}(Y) ]\, h \| 
	\le (T-t)\, L_1\, L_2\, \|\hat h\| \, \|h\|\, \exp( (L_0 + 2\, L_1 \, \|\hat h\|) (T-t))
	\nn\\
	& \Longrightarrow \quad
	\sup_{r,s\in[t,T]} \| U_{s,r} (Y+ \hat h) - U_{s,r}(Y)\|_{\bo(\cX^2)}
	\le (T-t)\, L_1\, L_2\, \|\hat h\| \, \exp( (L_0 + 2\, L_1 \, \|\hat h\|) (T-t)).
	\label{eq:U-diff-bound}
	\end{align}
	Continuity of $Y\mapsto U_{s,r}(Y)$, uniformly in $r,s\in[t,T]$, thus follows.
	
	Now, we show that $Y\mapsto U_{s,r}(Y)$ is {\Frechet} differentiable, uniformly in $r,s\in[t,T]$. Appealing to  the contraction theorem and Picard's principle, for any $t\leq r<s\leq T$ and $Y\in \cX$ we consider the two-parameter family of operators $V_{s,r}(y)\in \bo(\cX^2;\bo(\cX^2))$ solving
	\begin{align}
	V_{s,r}(Y)\, \hat h \, h 
	& = \int_r^s A(Y)_\sigma \, V_{\sigma,r}(Y)\, \hat h\, h\, d\sigma + 
	\int_r^s D_Y A(Y)_\sigma\, \hat h\, U_{\sigma,r}(Y)\, h\, d\sigma
	\label{eq:V-def}
	\end{align}
	for all $h, \hat h\in\cX^2$, $r,s\in[t,T]$, in which $D_Y A(Y)_\sigma = D \Lambda(\ol{X}(Y)_\sigma)\, U_{\sigma,t}(Y) \in \bo(\cX^2;\bo(\cX^2))$ by the chain rule and Lemma \ref{lem:X-of-Y-dble}. Note in particular by \er{eq:ass-M-V}, \er{eq:generator}, and Lemma \ref{lem:X-of-Y-cts} that 
	\begin{align}
	L_3 & \doteq \sup_{\sigma\in[t,T]} \| D \Lambda(\ol{X}(Y)_\sigma) \|_{\bo(\cX^2;\bo(\cX^2))} < \infty.
	\nn
	\end{align}
	Applying the triangle inequality to \er{eq:V-def}, and recalling the definitions of $L_0$, $L_1$, $L_2$, yields
	\begin{align}
	\| V_{s,r}(Y)\, \hat h \, h \|
	& \le \int_r^s (L_0 + L_1\, \|\hat h\|) \, \| V_{s,r}(Y)\, \hat h\, h\| \, d\sigma 
	+ \int_r^s L_3\, \|\hat h\| \, L_2\, \|h\|\, d\sigma 
	\nn\\
	& \le (T-t)\, L_2\, L_3\, \|\hat h\|\, \|h\| + (L_0 + L_1\, \|\hat h\|) \int_r^s \| V_{s,t}(Y)\, \hat h\, h\| \, d\sigma,
	\nn
	\end{align}
	so that by Gronwall's inequality, 
	\begin{align}
	\| V_{s,r}(Y)\, \hat h \, h \|
	& \le (T-t)\, L_2\, L_3\, \|\hat h\|\, \|h\|\, \exp\left( (L_0 + L_1\, \|\hat h\|)\, (T-t) \right).
	\nn
	\end{align}
	As $\hat h, h\in\cX^2$ are arbitrary, it follows immediately that $V_{s,r}(Y)\in\bo(\cX^2;\bo(\cX^2))$ for all $r,s\in[t,T]$. Recalling \er{eq:pre-Gronwall-U-cts-0}, observe by adding and subtracting terms that
	\begin{align}
	& [U_{s,r} (Y+ \hat h) - U_{s,r}(Y) - V_{s,r}(Y)\, \hat h]\, h
	\nn\\
	& 
	= \int_r^s [ A(Y + \hat h)_\sigma\, U_{\sigma,r}(Y+\hat h) - A(Y)_\sigma\, U_{\sigma,r}(Y) ]\, h \, d\sigma 
	- V_{s,r}(Y)\, \hat h\, h
	\nn\\
	& = \int_r^s A(Y)_\sigma \, [ U_{\sigma,r}(Y+\hat h) - U_{\sigma,r}(Y) - V_{\sigma,r}(Y)\, \hat h ]\, h\, d\sigma 
	\nn\\
	& \qquad 
	+ \int_r^s [ A(Y + \hat h)_\sigma - A(Y)_\sigma ]\,  [ U_{\sigma,r}(Y+\hat h) - U_{\sigma,r}(Y) ]\, h \, d\sigma
	\nn\\
	& \qquad
	+ \int_r^s [ A(Y + \hat h)_\sigma - A(Y)_\sigma - D_Y A(Y)_\sigma\, \hat h ] \, U_{\sigma,r}(Y) \, h\, d\sigma
	\nn\\
	& \qquad 
	- \left[ V_{s,r}(Y)\, \hat h\, h
	- \int_r^s A(Y)_\sigma\, V_{\sigma,r}(Y)\, \hat h \, h\, d\sigma 
	- \int_r^s  D_Y A(Y)_\sigma\, \hat h \, U_{\sigma,r}(Y) \, h\, d\sigma \right],
	\label{eq:pre-Gronwall-diff}
	\end{align}
	and the last term in square brackets is zero by definition \er{eq:V-def} of $V_{s,r}(Y)$. Define $\hat A:C([t,T];\cX^2)\rightarrow C([t,T];\bo(\cX^2))$ by $\hat A(X)_\sigma \doteq A(X_\sigma)$ for all $X\in C([t,T];\cX^2)$, and note that the range of $\hat A$ follows by \er{eq:ass-M-V}, \er{eq:generator}. Combining \er{eq:ass-M-V}, \er{eq:generator} with the mean value theorem, there exists $\hat\alpha\in\R_{\ge 0}$ such that
	\begin{align}
	& \|\hat A(X + \delta) - \hat A(X) - D A(X)\, \delta \|_{C([t,T];\bo(\cX^2))}
	= \sup_{\sigma\in[t,T]} \| A(X_\sigma + \delta_\sigma) - A(X_\sigma) - D A(X_\sigma)\, \delta_\sigma\|_{\bo(\cX^2)}
	\nn\\
	& = \sup_{\sigma\in[t,T]} \left\| \left( \ba{cc}
	0 & 0 \\
	\grad^2 V([X_\sigma+\delta_\sigma]_1) - \grad^2 V([X_\sigma]_1) - D\grad^2 V([X_\sigma]_1)\, [\delta_\sigma]_1
	& 0
	\ea \right) \right\|_{\bo(\cX^2)}
	\nn\\
	& \le \hat\alpha\, \sup_{\sigma\in[t,T]} \| \grad^2 V([X_\sigma+\delta_\sigma]_1) - \grad^2 V([X_\sigma]_1)
	- D\grad^2 V([X_\sigma]_1)\, [\delta_\sigma]_1 \|
	\nn\\
	& \le \hat\alpha\, \sup_{\sigma\in[t,T]} \left\| \left( 
	\int_0^1 D^2 \grad^2 V([X_\sigma]_1 + \eta [\delta_\sigma]_1) \, d\eta 
	\right) ([\delta_\sigma]_1, [\delta_\sigma]_1) \right\|_{\bo(\cX^2)}
	\nn\\
	& \le \hat\alpha\, \sup_{\sigma\in[t,T]} \int_0^1 \| D^2\grad^2 V([X_\sigma]_1 + \eta [\delta_\sigma]_1) \|_{\bo(\cX^2\times\cX^2;\bo(\cX^2))}\, d\eta\,
	\sup_{\sigma\in[t,T]}\|\delta_\sigma\|^2
	\nn\\
	& \le \ts{\frac{\hat\alpha\, K}{2}}\, \|\delta\|_\infty^2
	\label{eq:A-dble}
	\end{align}
	for all $X,\delta\in\cX^2$. Dividing both sides by $\|\delta\|_\infty$ and taking the limit as $\|\delta\|_\infty\rightarrow 0$ subsequently yields that $\hat A$ is {\Frechet} differentiable with derivative $D A(X)\in\bo(C([t,T];\cX^2);C([t,T];\bo(\cX^2))$. Hence, taking the norm of both sides of \er{eq:pre-Gronwall-diff}, applying the triangle inequality, \er{eq:U-diff-bound}, \er{eq:A-dble}, and recalling the definitions of $L_1$, $L_2$, $L_3$,
	\begin{align}
	& \| [U_{s,r} (Y+ \hat h) - U_{s,r}(Y) - V_{s,r}(Y)\, \hat h]\, h \|
	\nn\\
	& \le (L_0 + L_1\, \|\hat h\|) \int_r^s \| [U_{s,r} (Y+ \hat h) - U_{s,r}(Y) - V_{s,r}(Y)\, \hat h]\, h \| \, d\sigma
	\nn\\
	& \qquad
	+ (T-t)^2\, L_1^2\, L_2\, \|\hat h\|^2 \, \exp( (L_0 + 2\, L_1 \, \|\hat h\|) (T-t))\, \|h\|
	\nn\\
	& \qquad 
	+ (T-t) \, L_2\, \| \hat A\circ \ol{X}(Y+\hat h) - \hat A\circ \ol{X}(Y) 
	- D \hat A(\ol{X}(Y))\, D\ol{X}(Y)\, \hat h \|_{C([t,T];\bo(\cX^2))} \, \|h\|
	\nn\\
	& = (L_0 + L_1\, \|\hat h\|) \int_r^s \| [U_{s,r} (Y+ \hat h) - U_{s,r}(Y) - V_{s,r}(Y)\, \hat h]\, h \| \, d\sigma
	\nn\\
	& \qquad
	+ (T-t)^2\, L_1^2\, L_2\, \|\hat h\|^2 \, \exp( (L_0 + 2\, L_1 \, \|\hat h\|) (T-t))\, \|h\|
	\nn\\
	& \qquad
	+ (T-t)\, L_2\, \| d(\hat A\circ\ol{X})_{Y}(\hat h)\|_{C([t,T];\bo(\cX^2))}\, \|\hat h\|\, \|h\|,
	\nn
	\end{align}
	in which $d(\hat A\circ\ol{X})_Y(\cdot)$ is defined via \er{eq:df}. Hence, by Gronwall's inequality,
	\begin{align}
	&
	\| [U_{s,r} (Y+ \hat h) - U_{s,r}(Y) - V_{s,r}(Y)\, \hat h]\, h \|
	\nn\\
	& \le (T-t)\, L_2\, \left[ (T-t)\, L_1^2\, \|\hat h\| \, \exp( (L_0 + 2\, L_1 \, \|\hat h\|) (T-t))
	+ \| d(\hat A\circ\ol{X})_{Y}(\hat h)\|_{C([t,T];\bo(\cX^2))} \right] \|\hat h\| \, \|h\|
	\nn\\
	& \qquad
	\times \exp( (L_0 + L_1\, \|\hat h\|)\, (T-t)).
	\nn
	\end{align}
	As $\hat h, h\in\cX^2$ are arbitrary, 
	\begin{align}
	& \lim_{\|\hat h\|\rightarrow 0} \frac{\sup_{r,s\in[t,T]} \| U_{s,r} (Y+ \hat h) - U_{s,r}(Y) - V_{s,r}(Y)\, \hat h \|_{\bo(\cX^2)}}{\|\hat h\|}\\
	&\le \lim_{\|\hat h\|\rightarrow 0} \| d(\hat A\circ\ol{X})_{Y}(\hat h)\|_{C([t,T];\bo(\cX^2))} = 0.
	\nn
	\end{align}
	Hence, $Y\mapsto U_{s,r}(Y)$ is {\Frechet} differentiable, uniformly in $r,s\in[t,T]$, with derivative $V_{s,r}(Y)$. An analogous argument to \er{eq:pre-Gronwall-U-cts-0}, \er{eq:pre-Gronwall-U-cts} applied to \er{eq:V-def} can be applied to show that $Y\mapsto V_{s,r}(Y)$ is continuous, uniformly in $r,s\in[t,T]$, and the details are omitted.
\end{proof}
\begin{remark}
	We remark that, from the smoothness of $V$, we can compute the second order {\Frechet} derivative of $Y\mapsto U_{s,r}(Y)$. Indeed, using a first order expansion, we have that $A(z+\delta)=A(z)+DA(z)(\delta)+o(|\delta|)$, where $o(.)$ stands  for the second order integral rest. Now, fix $r>0$. In the same way as in (\ref{eq:pre-Gronwall-U-cts-0}), we have, putting $U_Y(s):=U_{s,r}(Y)$, for all $s\in[r,T]$
	\[ \dot{U}_{Y+\hat h}(s)-\dot U_{Y}(s)=A(Y)(s)(U_{Y+\hat h}(s)-U_{Y}(s))+DA(Y)(s)(\hat h)U_{Y+\hat h}(s) +o(|\hat h|)(s) U_{Y+\hat h}(s). \]
	So, letting $\xi(.)=U_{Y+\hat h}(.)-U_{Y}(.)$, $A(.)=A(Y)(.)$,  $Q(.)=DA(Y)(.)(\hat h)$, $v(.)=U_{Y+\hat h}(.)$, and $R(.)= o(|\hat h|)(.) U_{Y+\hat h}(.) $, the previous equation reduce to the ODE $\dot\xi=A\xi+Qv+R$, solvable via standard tools. Then, we have that $ \xi (s)=X(r) \hat h+\int_r^s X(s) X(t)^{-1}Q(t) v(t) \, dt+\int_r^s X(s) X(t)^{-1}R(t) \, dt$ where $X(.)$ is the fundamental solution of
	$ \dot X=A X  \text{ a.e. on }  [r,T],$ $X(r)=I.$ We have that $\int_r^s X(s) X(t)^{-1}R(t) \, dt$ is closed to zero for $\hat h\rightarrow 0$ (uniformly), and hence $X(r) \hat h+\int_r^s X(s) X(t)^{-1}Q(t) v(t)$ provide the derivative in (\ref{eq:V-def}). Moreover, it is possible to iterate such argument, in order to compute high order {\Frechet} derivatives of $Y\mapsto U_{s,r}(Y)$, as times as the potential $V$ is differentiable.
\end{remark}

\section{An auxiliary statement of Proposition \ref{prop:p-cost-dble}.}



\newcommand{\ddttwo}[2]		{{\frac{d^2 {#1}}{d {#2}^2}}}

\begin{proposition}
	Given $T\in\R_{>0}$, $t\in[0,T)$, $x,p\in\cX$, and $(\bar x_s, \bar p_s) \doteq \ol{X}(Y_p(x))_s$ for all $s\in[t,T]$, the maps $s\mapsto \grad_p \bar J_T(s,\bar x_s, \bar p_s)$ and $s\mapsto \grad_x \bar J_T(s,\bar x_s, \bar p_s)$ are continuously differentiable, with derivatives given by
	\begin{align}
	\ts{\ddtone{}{s}} \left[ \grad_p \bar J_T(s,\bar x_s, \bar p_s) \right]
	& = -\op{M}^{-1} \left( \bar p_s - \grad_x \bar J_T(s,\bar x_s,\bar p_s) \right),
	\label{eq:d-ds-grad-p-J-bar}
	\\
	\ts{\ddtone{}{s}} \left[ \grad_x \bar J_T(s,\bar x_s,\bar p_s) \right]
	& = \grad V(\bar x_s) - \grad^2 V(\bar x_s)\, \grad_p \bar J_T(s,\bar x_s, \bar p_s),
	\label{eq:d-ds-grad-x-J-bar}
	\end{align}  
	for all $s\in(t,T)$. Moreover, $s\mapsto \grad_p \bar J_T(s,\bar x_s, \bar p_s)$ is twice continuously differentiable, and satisfies
	\begin{align}
	0 & = \ts{\ddttwo{}{s}}  \left[ \grad_p \bar J_T(s,\bar x_s, \bar p_s) \right] 
	+ \op{M}^{-1}\, \grad^2 V(\bar x_s)\, \grad_p \bar J_T(s,\bar x_s, \bar p_s),
	\label{eq:second-order-ODE-grad-p-J-bar}
	\end{align}
	for all $s\in(t,T)$.
\end{proposition}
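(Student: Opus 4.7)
The plan is to read off all three identities from Proposition \ref{prop:p-cost-dble}, which has already cast the Riesz representations $(\grad_x\bar J_T, \grad_p\bar J_T)$ as components of the FVP \er{eq:FVP}. Concretely, that proposition gives $\grad_p\bar J_T(s,\bar x_s,\bar p_s) = \xi_s$ and $\grad_x\bar J_T(s,\bar x_s,\bar p_s) = \bar p_s - \pi_s$, where $(\xi,\pi)$ is the classical solution of the linear FVP \er{eq:FVP}. Since $s\mapsto\bar x_s$ and $s\mapsto\bar p_s$ are $C^1$ on $[t,T]$ by Lemma \ref{lem:X-classical} and \er{eq:Cauchy-dynamics}, and the coefficient $A_s$ in \er{eq:FVP} is continuous in $s$, the pair $(\xi,\pi)$ is $C^1$; hence the regularity of the maps in the statement is inherited for free, and no fresh limiting argument is needed.

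For \er{eq:d-ds-grad-p-J-bar}, I would differentiate the identification $\grad_p\bar J_T(s,\bar x_s,\bar p_s) = \xi_s$ and invoke the first row of \er{eq:FVP}, obtaining $\ts{\ddtone{}{s}}\grad_p\bar J_T(s,\bar x_s,\bar p_s) = \dot\xi_s = -\op{M}^{-1}\pi_s$, and then substitute $\pi_s = \bar p_s - \grad_x\bar J_T(s,\bar x_s,\bar p_s)$. For \er{eq:d-ds-grad-x-J-bar}, I would differentiate $\grad_x\bar J_T(s,\bar x_s,\bar p_s) = \bar p_s - \pi_s$ and combine $\dot{\bar p}_s = \grad V(\bar x_s)$ from \er{eq:Cauchy-dynamics} with $\dot\pi_s = \grad^2 V(\bar x_s)\,\xi_s$ from the second row of \er{eq:FVP}, and then replace $\xi_s$ by $\grad_p\bar J_T(s,\bar x_s,\bar p_s)$.

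The second-order statement \er{eq:second-order-ODE-grad-p-J-bar} then follows by differentiating \er{eq:d-ds-grad-p-J-bar} once more in $s$: the right-hand side $-\op{M}^{-1}\pi_s$ is $C^1$ in $s$ by the regularity noted above, and one has $\ts{\ddttwo{}{s}}\grad_p\bar J_T = -\op{M}^{-1}\dot\pi_s = -\op{M}^{-1}\grad^2 V(\bar x_s)\,\xi_s$; reidentifying $\xi_s = \grad_p\bar J_T(s,\bar x_s,\bar p_s)$ and moving the term to the left yields exactly \er{eq:second-order-ODE-grad-p-J-bar}. There is no genuine obstacle; the only care required is to keep the correspondence $(\xi,\pi) \leftrightarrow (\grad_p\bar J_T,\, \bar p_s - \grad_x\bar J_T)$ from Proposition \ref{prop:p-cost-dble} consistent throughout, and to use the $C^1$ regularity of $\bar x$, $\bar p$, $\xi$, $\pi$ to justify passing the derivative in $s$ through each expression.
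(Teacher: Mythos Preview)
Your proposal is correct but follows a genuinely different route from the paper's own proof. You read the identities directly off the FVP \er{eq:FVP} supplied by Proposition~\ref{prop:p-cost-dble}: having the identification $\grad_p\bar J_T(s,\bar x_s,\bar p_s)=\xi_s$ and $\grad_x\bar J_T(s,\bar x_s,\bar p_s)=\bar p_s-\pi_s$ in hand, the ODE rows $\dot\xi_s=-\op{M}^{-1}\pi_s$ and $\dot\pi_s=\grad^2 V(\bar x_s)\,\xi_s$, together with $\dot{\bar p}_s=\grad V(\bar x_s)$, give \er{eq:d-ds-grad-p-J-bar}--\er{eq:d-ds-grad-x-J-bar} by substitution, and one further differentiation yields \er{eq:second-order-ODE-grad-p-J-bar}. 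The regularity you invoke is exactly right: continuity of $s\mapsto A_s$ (via $\bar x\in C^1$ and $\grad^2 V\in C$) makes $(\xi,\pi)\in C^1$, and then $\dot\xi_s=-\op{M}^{-1}\pi_s\in C^1$ gives $\xi\in C^2$.

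The paper instead proceeds through the HJB equation: it invokes Theorem~\ref{thm:verify-J-bar} to assert that $\bar J_T$ itself solves \er{eq:verify-J-bar}, differentiates that PDE with respect to $p$ (resp.\ $x$), evaluates along the characteristic $(\bar x_s,\bar p_s)$, and matches the result against the chain-rule expansion of $\ts{\ddtone{}{s}}[D_p\bar J_T(s,\bar x_s,\bar p_s)\,h]$ (resp.\ $D_x$). This requires the twice-continuous {\Frechet} differentiability of $\bar J_T$ from Proposition~\ref{prop:X-cost-dble} to interchange partial derivatives. Your approach is shorter and more elementary because Proposition~\ref{prop:p-cost-dble} has already packaged the adjoint dynamics; the paper's approach buys an independent derivation that bypasses the explicit evolution-operator formula \er{eq:zeta-dynamics} and ties the result directly to the PDE structure \er{eq:verify-J-bar}, which may be the intended illustration given the proposition's placement as an ``auxiliary statement''.
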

\begin{proof}
	Fix $T\in\R_{>0}$, $x,p\in\cX$, and let $(\bar x_s, \bar p_s)\in\cX^2$, $s\in[t,T]$, be as per the lemma statement. Fix $h\in\cX$.
	Applying Proposition \ref{prop:X-cost-dble}, $(s,x,p)\mapsto \bar J_T(s,x,p)$ is twice continuously differentiable, and the order of differentiation may be swapped. In particular,
	\begin{align}
	\ts{\ddtone{}{s}} \, [ D_p \, \bar J_T(s, \bar x_s, \bar p_s)\, h ]
	& = \ts{\pdtone{}{s}}\, D_p \, \bar J_T(s,\bar x_s, \bar p_s)\, h +
	D_x\, [D_p \, \bar J_T(s,\bar x_s,\bar p_s)\, h]\, \dot{\bar x}_s  + 
	D_p \, [D_p \, \bar J_T(s,\bar x_s, \bar p_s)\, h]\, \dot{\bar p}_s
	\nn\\
	& = \ts{\pdtone{}{s}}\, D_p \, \bar J_T(s,\bar x_s, \bar p_s)\, h +
	(D_x\, [D_p \, \bar J_T(s,\bar x_s,\bar p_s)] \, \dot{\bar x}_s) \, h  + 
	(D_p \, [D_p \, \bar J_T(s,\bar x_s, \bar p_s)]\, \dot{\bar p}_s) \, h
	\nn\\
	& = \left[ D_p \, \ts{\pdtone{}{s}}\, \bar J_T(s,\bar x_s, \bar p_s) +
	D_x\, D_p \, \bar J_T(s,\bar x_s,\bar p_s) \, \dot{\bar x}_s +
	D_p \, D_p \, \bar J_T(s,\bar x_s, \bar p_s)\, \dot{\bar p}_s \right] h.
	\label{eq:d-ds-Dp-J-0}
	\end{align}
	Meanwhile, $\bar J_T$ satisfies \er{eq:verify-J-bar} by Theorem \ref{thm:verify-J-bar}, i.e.
	\begin{align}
	0 
	& = -\ts{\pdtone{}{s}} \bar J_T(s,x,p) - \demi\, \langle p, \, \op{M}^{-1}\, p \rangle + V(x) 
	+ D_x \bar J_T(s,x,p)\, \op{M}^{-1}\, p - D_p \bar J_T(s,x,p)\, \grad V(x),
	\label{eq:apvgx-HJB}
	\end{align}
	for all $s\in(t,T)$, $x,p\in\cX$. Differentiating \er{eq:apvgx-HJB} with respect to $p$,
	\begin{align}
	0 & = - D_p [ \ts{\pdtone{}{s}} \bar J_T(s,x,p)]\, h - \langle \op{M}^{-1}\, p, \, h \rangle 
	+ D_p\, D_x\, \bar J_T(s,x,p)\, h\, \op{M}^{-1}\, p + D_x\, \bar J_T(s,x,p)\, \op{M}^{-1}\, h
	\nn\\
	& \qquad
	- D_p\, D_p\, \bar J_T(s,x,p)\, \grad V(x)\, h
	\nn\\
	& = - \langle \op{M}^{-1}\, (p - \grad_x \bar J_T(s,x,p)), \, h \rangle 
	\nn\\
	& \qquad -\left[ D_p [ \ts{\pdtone{}{s}} \bar J_T(s,x,p)]\, h - D_p\, D_x\, \bar J_T(s,x,p)\, h\, \op{M}^{-1}\, p
	+ D_p\, D_p\, \bar J_T(s,x,p)\, \grad V(x)\, h
	\right]
	\nn\\
	& = - \langle \op{M}^{-1}\, (p - \grad_x \bar J_T(s,x,p)), \, h \rangle 
	\nn\\
	& \qquad -\left[ D_p \, \ts{\pdtone{}{s}} \bar J_T(s,x,p) - D_x\, D_p\, \bar J_T(s,x,p)\, \op{M}^{-1}\, p
	+ D_p\, D_p\, \bar J_T(s,x,p)\, \grad V(x) \right] h.
	\nn
	\end{align}
	Evaluating along the trajectory $s\mapsto (\bar x_s,\bar p_s)$ yields
	\begin{align}
	& \left[ D_p \, \ts{\pdtone{}{s}} \bar J_T(s,\bar x_s, \bar p_s) 
	+ D_x\, D_p\, \bar J_T(s,\bar x_s, \bar p_s)\, \dot{\bar x}_s
	+ D_p\, D_p\, \bar J_T(s,\bar x_s, \bar p_s)\, \dot{\bar p}_s \right] h
	\nn\\
	& \hspace{80mm}
	= - \langle \op{M}^{-1}\, (\bar p_s - \grad_x \bar J_T(s,\bar x_s, \bar p_s)), \, h \rangle 
	\nn
	\end{align}
	Hence, substitution in \er{eq:d-ds-Dp-J-0} yields
	\begin{align}
	\langle \ts{\ddtone{}{s}} [ \grad_p \bar J_T(s,\bar x_s,\bar p_s) ], \, h \rangle
	=
	\ts{\ddtone{}{s}} \, [ D_p \, \bar J_T(s, \bar x_s, \bar p_s)\, h ]
	& = - \langle \op{M}^{-1}\, (\bar p_s - \grad_x \bar J_T(s,\bar x_s, \bar p_s)), \, h \rangle.
	\label{eq:d-ds-Dp-J}
	\end{align}
	Recalling that $h\in\cX$ is arbitrary immediately yields \er{eq:d-ds-grad-p-J-bar}.
	
	Similarly, for \er{eq:d-ds-grad-x-J-bar}, observe that
	\begin{align}
	\ts{\ddtone{}{s}} [ D_x \bar J_T(s,\bar x_s,\bar p_s)\, h]
	& = \ts{\pdtone{}{s}} D_x \bar J_T(s,\bar x_s,\bar p_s)\, h 
	+ D_x\, [ D_x \bar J_T(s,\bar x_s,\bar p_s)\, h]\, \dot{\bar x}_s 
	+ D_p\, [ D_x \bar J_T(s,\bar x_s,\bar p_s)\, h]\, \dot{\bar p}_s
	\nn\\
	& = \ts{\pdtone{}{s}} D_x \bar J_T(s,\bar x_s,\bar p_s)\, h
	+ (D_x\, [ D_x \bar J_T(s,\bar x_s,\bar p_s)] \, \dot{\bar x}_s)\, h
	+ (D_p\, [ D_x \bar J_T(s,\bar x_s,\bar p_s)]\, \dot{\bar p}_s)\, h
	\nn\\
	& = [ D_x\, \ts{\pdtone{}{s}} \bar J_T(s,\bar x_s,\bar p_s) 
	+ D_x\, D_x \bar J_T(s,\bar x_s,\bar p_s)\, \dot{\bar x}_s
	+ D_p\, D_x \bar J_T(s,\bar x_s,\bar p_s)\, \dot{\bar p}_s ]\, h.
	\label{eq:d-ds-Dx-J-0}
	\end{align}
	Meanwhile, differentiating \er{eq:apvgx-HJB} with respect to $x$,
	\begin{align}
	0 & = -D_x [ \ts{\pdtone{}{s}} \bar J_T(s,x,p) ]\, h + D_x V(x)\, h 
	+ D_x \, [D_x \bar J_T(s,x,p)\, \op{M}^{-1}\, p]\, h
	- D_x \, [ D_p \bar J_T(s,x,p)\, \grad V(x) ]\, h
	\nn\\
	& = - \left[ D_x [ \ts{\pdtone{}{s}} \bar J_T(s,x,p) ]\, h 
	+ D_x\, [D_x \bar J_T(s,x,p)\, h] \, (-\op{M}^{-1}\, p) 
	+ (D_x\, [D_p \bar J_T(s,x,p)]\, h)\, \grad V(x)
	\right]
	\nn\\
	& \qquad + D_x V(x)\, h - D_p \bar J_T(s,x,p)\, D_x \grad V(x)\, h
	\nn\\
	& = - \left[ D_x \, \ts{\pdtone{}{s}} \bar J_T(s,x,p) 
	+ D_x\, D_x \bar J_T(s,x,p)\, (-\op{M}^{-1}\, p) 
	+ D_p\, D_x \bar J_T(s,x,p)\, \grad V(x) \right] h
	\nn\\
	& \qquad + \left[ D_x V(x) - D_p \bar J_T(s,x,p)\, D_x \grad V(x) \right] h
	\nn
	\end{align}
	Evaluating along the trajectory $s\mapsto (\bar x_s,\bar p_s)$ yields
	\begin{align}
	& \left[ D_x \ts{\pdtone{}{s}} \bar J_T(s,\bar x_s, \bar p_s)
	+ D_x\, D_x \bar J_T(s,\bar x_s, \bar p_s)\, \dot{\bar x}_s) 
	+ D_p\, D_x \bar J_T(s,\bar x_s, \bar p_s)\, \dot{\bar p}_s \right] h
	\nn\\
	& = \langle \grad V(\bar x_s) - \grad^2 V(\bar x_s)\, \grad_p \bar J_T(s,\bar x_s,\bar p_s), \, h \rangle.
	\nn
	\end{align}
	Hence, substitution in \er{eq:d-ds-Dx-J-0} yields
	\begin{align}
	\langle \ts{\ddtone{}{s}} [ \grad_x \bar J_T(s,\bar x_s,\bar p_s)], \, h \rangle
	& = \langle \grad V(\bar x_s) - \grad^2 V(\bar x_s)\, \grad_p \bar J_T(s,\bar x_s,\bar p_s), \, h \rangle.
	\nn
	\end{align}
	Recalling that $h\in\cX$ is arbitrary immediately yields \er{eq:d-ds-grad-x-J-bar}.
	
	The remaining assertion regarding twice differentiability is immediate by inspection of \er{eq:d-ds-grad-p-J-bar}, \er{eq:d-ds-grad-x-J-bar}, with
	\begin{align}
	\ts{\ddttwo{}{s}} [ \grad_p \bar J_T(s,\bar x_s,\bar p_s) ]
	& = -\op{M}^{-1} \left( \dot{\bar p}_s - \ts{\ddttwo{}{s}} [ \grad_p \bar J_T(s,\bar x_s,\bar p_s) ] \right)
	\nn\\
	& = -\op{M}^{-1} \left( \grad V(\bar x_s) 
	- [ \grad V(\bar x_s) - \grad^2 V(\bar x_s) \, \grad_p \bar J_T(s,\bar x_s,\bar p_s)] \right)
	\nn\\
	& = -\op{M}^{-1} \, \grad V(\bar x_s) \, \grad^2 V(\bar x_s) \, \grad_p \bar J_T(s,\bar x_s,\bar p_s),
	\nn
	\end{align}
	as required.
\end{proof}

\newpage

\bibliographystyle{plain}
\bibliography{action}

\end{document}